\documentclass[11pt]{article}

\usepackage[english]{babel} 
\usepackage[utf8]{inputenc} 
\usepackage[T1]{fontenc}
\usepackage{amsmath,amssymb,fullpage}
\usepackage[all,dvips]{xy}

\usepackage{tikz}
\usepackage{tikz,fullpage}
\usetikzlibrary{positioning, arrows.meta}
\usetikzlibrary{matrix}

\usepackage{enumerate}
\usepackage{mathtools}

\usepackage{graphicx}
\usepackage{color}
\definecolor{vert}{rgb}{0.02,0.4,0.10}
\usepackage{hyperref}
\hypersetup{
	colorlinks=true,                
	breaklinks=true,                
	linkcolor= vert,                
	citecolor= blue                
}

\usepackage{graphicx}
\graphicspath{ {images/} }
\usepackage{pgf,tikz,pgfplots}
\pgfplotsset{compat=1.14}
\usepackage{mathrsfs}
\usetikzlibrary{arrows}

\usepackage{makeidx}
\usepackage{pstricks}

\providecommand{\otherindexspace}[1]{}

\usepackage{authblk}
\usepackage{amsthm}

\newtheorem{theorem}{Theorem}[section]

\newtheorem{lemma}[theorem]{Lemma}
\newtheorem{proposition}[theorem]{Proposition}
\newtheorem{remark}[theorem]{Remark}

\newtheorem{corollary}[theorem]{Corollary}

\numberwithin{equation}{section}

\def\vp{\varepsilon}

\def \R{\mathbb {R}}
\def \N{\mathbb{N}}
\def \Z{\mathbb{Z}}

\def \C{\mathbb{C}}

\def \T{\mathcal{T}}
\def \cL{\mathcal{L}}

\def\E{\mathbb{E}}
\def\P{\mathbb{P}}
\def\lb{[\![}
\def\rb{]\!]}

\usepackage{fancyhdr}

\renewcommand{\Re}{\mathrm{Re}}

\usepackage{graphics}
\usepackage{graphicx}

\DeclareGraphicsExtensions{.eps,.bmp,.jpg,.pdf,.mps,.png,.gif}

\makeatletter
\def\titre{\@title}
\makeatother

\title{Depth profile of depth-weighted trees with bounded weights}

\author{Emmanuel Kammerer \thanks{Emmanuel College \& DPMMS, University of Cambridge, ek672@cam.ac.uk}
}
\date{\today}
\begin{document}

\maketitle

\begin{abstract}
 We study the depth-weighted random recursive trees introduced by Leckey, Mitsche and Wormald in the case where the weights are bounded from above and from below. We establish the scaling limit of the depth profile of these trees when the weights satisfy a law of large numbers. In particular, we obtain the scaling limit of the depth of these trees, generalising some results of Lichev, Linker, Lodewijks and Mitsche. We also answer negatively a question left open by the same authors, showing that the scaling limit of the depth does not hold in general. Our main tools are appropriately defined martingales combined with the general Edgeworth expansion for the profiles introduced by Kabluchko, Marynych and Sulzbach.
\end{abstract}


\section{Introduction}
Random trees have been used to model various real-world networks. One of the simplest models, the uniform attachment tree, has been extensively studied since its introduction in \cite{NR70}. See e.g.\@ \cite{Pit94} and the survey \cite{SM94} for a detailed account of its properties. Uniform attachment trees, also called random recursive trees, are built recursively and every new vertex is attached to an existing vertex chosen uniformly at random. Since then, numerous generalisations of this model have been introduced, one of them being the Hoppe trees \cite{LN13}, where the root has a weight $\theta>0$ while all the other vertices have a weight $1$ and every new vertex is attached to an existing vertex proportionally to its weight. This slight generalisation of uniform attachment trees is actually a particular case of the very general model of weighted recursive trees introduced previously in \cite{BV06}.

More recently, \cite{LMW20} introduced a different generalisation where each generation of the tree has a different weight. Such trees, called depth-weighted trees, can be useful to model situations where a new vertex tends to attach to vertices of some generations but not other ones. Let us give a precise definition of the model. Let $\N = \{0, 1, \ldots\}$ be the set of natural numbers. If $\T$ is a rooted tree and $v \in \T$ is a vertex, then the graph distance from the root to $v$ is called the depth (also often called the height) of $v$ in $\T$ and we denote it by $d(v)$. The depth of $\T$ is defined as the maximal depth of a vertex in $\T$ and will be denoted by $d(\T)$. Let $f: \N \to (0, \infty)$. To each vertex $v$ of a tree, we assign a weight $f(d(v))$ which only depends on its depth $d(v)$. The sequence of depth-weighted random recursive trees $(\T_n)$ is defined inductively as follows. We start with a tree $\T_0$ made of only one vertex $v_0$. Then, assume that $\T_n$ is constructed for some $n\in \N$. Conditionally on $\T_n$, let $V_n$ be a vertex of $\T_n$ chosen with probability proportional to its weight, i.e.
\[
\forall v \in \T_n, \qquad \qquad \P(V_n=v\vert \T_n) = \frac{f(d(v))}{\sum_{u \in \T_n} f(d(u))}.
\]
The tree $\T_{n+1}$ is obtained by attaching a new vertex $v_{n+1}$ to $V_n$.

Different choices of the weight function $f$ yield very different asymptotic geometries for $\T_n$. In Theorem 2.3 of \cite{LMW20}, it is for example shown that the expected depth $\E(d(\T_n))$ is a $\Theta(\log n)$ when the $f$ is bounded from above and from below while it is a $\Theta(n)$ when $f$ increases fast enough, with some intermediate behaviours. In \cite{LLLM26}, the asymptotic behaviour of the depth itself is studied and Theorems 1.4 and 1.6 of \cite{LLLM26} show that for convergent, periodic or slowly growing $f$, the depth $d(\T_n)$ is a.s.\@ equivalent to $e\log n$, mirroring the classical result for random recursive trees \cite{Pit94}. Their other main result, presented in Theorems 1.8 and 1.13 of \cite{LLLM26}, shows that if $f$ has an exponential or a super-exponential growth, then the depth of $\T_n$ is a.s.\@ a $\Theta(n)$ as $n\to \infty$, which is equivalent to $n$ in the case of super-exponential growth (and if $f$ has a sub-exponential growth, then the depth is a $o(n)$ almost surely). After this systematic analysis and a few simulations, they leave three appealing open questions. In the first one, Question 1.5, they expect the $e \log n$ asymptotics to hold more generally and ask whether it is always the case when $f$ is bounded from above and from below.

\paragraph{Main results.} In this paper, we work in the setting where $f$ is bounded from above and from below, i.e.\@ that there exists a constant $c>0$ such that $c \le f(n) \le 1/c$ for all $n \in \N$. We study the scaling limit of the depth as well as that of the profile of the tree.

In order to state our main result, let us introduce several quantities. We set $S_n \coloneqq \sum_{k=0}^{n-1} \log f(k)$ for all $n\ge 0$, where by convention we set $S_0=0$. We introduce the following condition:
\begin{equation}\label{condition ergodicite}
	\frac{S_n}{n} \mathop{\longrightarrow}_{n\to \infty} \ell \in \R.
\end{equation}
Note that the cases where $f$ is periodic or convergent explored in \cite{LLLM26} are particular cases of the above assumption. Another natural particular case of the above assumption is the case where the weights are taken i.i.d.\@ at random. \textbf{Note that since the law of $\T_n$ only depends on $f$ up to a multiplicative constant, we may assume that $\ell=0$ in \eqref{condition ergodicite}.}

We also denote by $Z_n \coloneqq \sum_{v \in \T_n} f(d(v))$ the sum of the weights of the vertices of $\T_n$. Note that $Z_n$ is random.  Let $n_0\coloneqq \lfloor e^2/c \rfloor +1$. A quantity of interest will be the sum of the inverses of $Z_n$ defined for all $n\ge n_0$ by
\begin{equation}\label{eq h n}
	\mathrm{h}_n \coloneqq \sum_{k=n_0}^{n-1} \frac{1}{Z_k}.
\end{equation}
Note that since $c \le f \le 1/c$, we have
\begin{equation}\label{eq encadrement h n}
	c\sum_{k=n_0}^{n-1} \frac{1}{k+1} \le \mathrm{h}_n \le \frac{1}{c}\sum_{k=n_0}^{n-1} \frac{1}{k+1}.
\end{equation}

Our main result can be stated as follows. For all $n ,k \in \N$, let $\mathbb{L}_n(k)$ be the number of vertices of $\T_n$ at depth $k$. Recall that a point $z$ of a set $\mathcal{Z}\subset \R$ is isolated if there exists $\vp>0$ such that $(z-\vp, z+\vp) \cap \mathcal{Z}= \{z\}$.
\begin{theorem}\label{theoreme profil}
	Assume that $f$ is bounded from above and from below and satisfies \eqref{condition ergodicite} with 
	$\ell=0$. Then, there exist a random variable $W_\infty(0)>0$ and a random set $\mathcal{Z}\subset (-\infty, 1)\setminus \{0\}$ which is at most countable and whose elements are isolated such that the following holds. Almost surely, as $n\to \infty$,
	\[\mathbb{L}_n(k) = \frac{W_\infty(0)e^{\mathrm{h}_n}}{\sqrt{2\pi \mathrm{h}_n}} \exp\left(S_k- \frac{1}{2} \left(\frac{k-\mathrm{h}_n}{\sqrt{\mathrm{h}_n}}\right)^2 \right)  + e^{S_k}o\left(\frac{e^{\mathrm{h}_n}}{\sqrt{\mathrm{h}_n}} \right),\]
	where the small $o$ is uniform in $k \in \N$. Moreover, for every compact set $K \subset (-\infty,1)$, on the event that $ K \cap \mathcal{Z} = \emptyset $, we have almost surely for all $z \in K$,
	\[
	\mathbb{L}_n(\lfloor e^z \mathrm{h}_n \rfloor) = \exp\left({S_{\lfloor e^z \mathrm{h}_n \rfloor} +  (1-z)e^z \mathrm{h}_n -\frac{1}{2} \log \mathrm{h}_n + O(1) }\right),
	\]
	where the $O(1)$ is uniform in $z\in K$. Furthermore,
	\[
	\frac{\mathrm{h}_n}{\log n} \mathop{\longrightarrow}\limits_{n\to \infty}^{\mathrm{a.s.}} 1 \qquad \text{and} \qquad \frac{d(\T_n)}{\log n} \mathop{\longrightarrow}\limits_{n\to \infty}^{\mathrm{a.s.}} e.
	\]
\end{theorem}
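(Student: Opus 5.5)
We follow the standard route for random recursive tree profiles: build exponential martingales from the profile, then feed them into the Edgeworth-type expansion of Kabluchko, Marynych and Sulzbach. For $z\in\C$ near the real segment $(-\infty,1)$, define the weighted generating function
\[
Y_n(z)\coloneqq\sum_{v\in\T_n} f(d(v))\,e^{-S_{d(v)}}\,z^{d(v)}=\sum_k \mathbb{L}_n(k)f(k)e^{-S_k}z^k .
\]
Conditionally on $\T_n$, attaching the new vertex to $V_n$ at depth $d$ adds one vertex at depth $d+1$. That vertex contributes $f(d+1)e^{-S_{d+1}}z^{d+1}=f(d+1)\,z\,e^{-S_d}z^d/f(d)\cdot f(d)/f(d+1)\cdots$, and since $S_{d+1}-S_d=\log f(d)$ this simplifies to $z\,e^{-S_d}z^d\cdot f(d+1)/f(d)$. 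The ratio $f(d+1)/f(d)$ is annoying, so we instead weight by $e^{-S_{d}}$ alone and set $X_n(z)=\sum_v e^{-S_{d(v)}}z^{d(v)}$. Then
\[
\E[X_{n+1}(z)\mid\T_n]=X_n(z)+\frac{1}{Z_n}\sum_v f(d(v))e^{-S_{d(v)+1}}z^{d(v)+1}=X_n(z)\Big(1+\frac{z}{Z_n}\Big),
\]
using $f(d)e^{-S_{d+1}}=e^{-S_d}$. Hence
\[
W_n(z)\coloneqq X_n(z)\prod_{k=n_0}^{n-1}\Big(1+\frac{z}{Z_k}\Big)^{-1}
\]
is a martingale, with normalisation of order $e^{z\mathrm h_n}$ up to a convergent correction; the choice of $n_0$ ensures the product does not vanish. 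The generating function $X_n(e^{u})$ has coefficients $\mathbb{L}_n(k)e^{-S_k}$, which is exactly why $e^{S_k}$ appears in the statement.

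\textbf{Step 1.} Show that $W_n$ converges a.s.\ and in $L^p$, uniformly on compact subsets of a complex neighbourhood of $\{e^u:u<1\}$. We compute the conditional second moment: the increment variance is at most $C|z|^{2d}e^{-2\Re S_d}/Z_n$ summed against the profile. Using $c\le f\le 1/c$ and $S_k=o(k)$, this gives $\sum_n \E|W_{n+1}-W_n|^2<\infty$ when $|z|^2$ times the growth rate stays below the mean growth, i.e.\ in the region $u<1$ for $z=e^u$ (the usual Biggins-type $L^2$ window, extended to $L^p$ with $p$ close to $1$ to cover all $u<1$). Condition \eqref{condition ergodicite} is used here to absorb the $e^{-2S_d}$ factors into $e^{o(d)}$. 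Uniformity on compacts then follows from analyticity and Cauchy estimates, which yields a random analytic limit $W_\infty$. Positivity of $W_\infty(0)$ (the value at $z=1$, in the paper's parametrisation $z=e^u$, $u=0$) holds because $W_n(1)=X_n(1)/\prod(1+1/Z_k)$ is comparable to $n/\,e^{\mathrm h_n}$ and is bounded below. We take $\mathcal Z$ to be the zero set of $u\mapsto W_\infty(e^u)$ on $(-\infty,1)$. Since $W_\infty$ is analytic and not identically zero, its zeros are isolated and countable, and $0\notin\mathcal Z$.

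\textbf{Step 2.} Apply the general Edgeworth expansion of Kabluchko–Marynych–Sulzbach to the random profile $k\mapsto \mathbb{L}_n(k)e^{-S_k}$, with time parameter $\mathrm h_n$. Their hypotheses amount to a.s.\ uniform convergence of $X_n(e^{u})e^{-\mathrm h_n(e^u-1)}\to W_\infty(e^u)$ on a complex strip around $u=0$ with $\mathrm h_n\to\infty$, which Step 1 provides together with $\prod(1+e^u/Z_k)=e^{e^u\mathrm h_n+O(1)}$. The first display of the theorem is the local CLT term of that expansion. The second display is the large-deviation/saddle-point version at $u=z$ for $z\in K$ with $W_\infty(e^z)\neq0$. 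There $e^{e^z\mathrm h_n-z e^z\mathrm h_n}$ gives $(1-z)e^z\mathrm h_n$, and the factor $W_\infty(e^z)$ is bounded away from $0$ and $\infty$ uniformly on $K$, giving the $O(1)$.

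\textbf{Step 3.} For $\mathrm h_n/\log n\to1$, note that $Z_n/n\to\bar f$ a.s., where $\bar f$ is the mean weight under the limiting depth profile. The depths concentrate around $\mathrm h_n\to\infty$, and by \eqref{condition ergodicite} with $\ell=0$, the Gaussian profile with $e^{S_k}$ weights averages $f$ over a window of width $\sqrt{\mathrm h_n}$ around $\mathrm h_n$. This forces $Z_n/\sum_k\mathbb{L}_n(k)\to1$, i.e.\ $Z_n\sim n$, and hence $\mathrm h_n\sim\log n$. This step is subtle, because $e^{S_k}$ can fluctuate by $e^{o(k)}$. I would rather use the exact martingale identity: $W_n(1)$ converging to a positive limit gives $n\asymp\prod(1+1/Z_k)\asymp e^{\mathrm h_n}$, so $\mathrm h_n=\log n+O(1)$ directly. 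This is cleaner and is what I would do.

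\textbf{Step 4.} For the depth, the lower bound $\liminf d(\T_n)/\log n\ge e$ comes from the second display as $z\uparrow1$ ($(1-z)e^z\mathrm h_n>0$ with $S_k=o(k)$), away from $\mathcal Z$, which is possible since $\mathcal Z$ is countable. For the upper bound, use a first-moment bound through the martingale at $u$ slightly above $1$. Markov's inequality on $\E X_n(e^u)\le e^{e^u\mathrm h_n+o(k)}$ gives $\P(\mathbb{L}_n(k)\ge1)\le e^{S_k+e^u\mathrm h_n-uk}$, which is summable along a geometric subsequence for $k\ge(e+\vp)\log n$; monotonicity of $d(\T_n)$ then concludes.

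The main obstacle is Step 1 near $u\to1$, namely the $L^p$ convergence uniformly in complex $z$ with the irregular factors $e^{-S_d}$. This is also where the negative answer to Question 1.5 shows that the ergodic assumption is genuinely needed.
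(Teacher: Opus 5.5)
Your martingale $W_n$ is the paper's $M_n$ (with $z=e^u$), and the convergence part of Step 1 is essentially Subsection \ref{sous-section controle martingale}. However, the proposal has two genuine gaps.

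\textbf{First gap: positivity of the limit at $z=1$.} This is the paper's $M_\infty(0)>0$ a.s., and the first display, the structure of $\mathcal{Z}$, Step 3 and the lower bound of Step 4 all rest on it. Your justification is circular. The claim ``$W_n(1)\asymp n e^{-\mathrm{h}_n}$ is bounded below'' is the statement $\mathrm{h}_n\le\log n+O(1)$, which Step 3 then derives from this same positivity. A priori only $c\log n-O(1)\le\mathrm{h}_n\le\log(n)/c+O(1)$ is known. The comparison is also false: $X_n(1)=\cL_n(0)=\sum_k\mathbb{L}_n(k)e^{-S_k}$ is not of order $n$, since $e^{-S_k}=e^{o(k)}$ need not be bounded. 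Consequently the conclusion $\mathrm{h}_n=\log n+O(1)$ of Step 3 is false in general. For instance, for $\log f(k)=1/(2\sqrt{k+1})$ most vertices sit at depth about $\log n$, so $Z_n\approx n(1+1/(2\sqrt{\log n}))$ and $\mathrm{h}_n=\log n-(1+o(1))\sqrt{\log n}$ (see also Remark \ref{more precise behaviour of h n}). Only $\mathrm{h}_n\sim\log n$ holds.

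Boundedness in $L^p$ gives merely $\P(M_\infty(0)>0)>0$. The usual fixed-point argument upgrading this to probability one is unavailable, because the weights depend on absolute depth. The paper's replacement has two steps:
\begin{enumerate}[(i)]
\item A crude a.s.\ bound $d(\T_n)\le(3+2e/c)\log n$, obtained from $e^{zd(\T_n)-S_{d(\T_n)}}\le\cL_n(z)$ with $z\in(0,1)$, Markov and Borel--Cantelli. Together with \eqref{condition ergodicite} this gives $\log\cL_n(0)\sim\log n$.
\item On $\mathcal{A}_n=\{d(\T_n)\le(3+2e/c)\log n,\ \cL_n(0)\ge n^{3/4}\}$, each new vertex changes $\cL_n(0)$ by a relative amount $n^{-3/4+o(1)}$, and the increment of $\log M_n(0)$ has conditional mean $O(n^{-5/4})$. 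Hence $u_n=\E[(\log M_n(0))^2\mathbf{1}_{\bigcap_{i=k}^n\mathcal{A}_i}]$ satisfies $u_{n+1}\le(1+Kn^{-5/4})u_n+Kn^{-5/4}$ and stays bounded. Fatou then gives $M_\infty(0)>0$ on $\bigcap_{n\ge k}\mathcal{A}_n$, and these events exhaust a set of full probability as $k\to\infty$ (Lemma \ref{lemme limite non nulle}).
\end{enumerate}
Only after this does $\mathrm{h}_n\sim\log n$ follow, from $\log\cL_n(0)\sim\log C_n(0)$ (Lemma \ref{lemme equivalent ps h n}).

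\textbf{Second gap: Step 2.} Locally uniform convergence of $e^{-e^z\mathrm{h}_n}\cL_n(z)$ on a thin strip $\{|\mathrm{Im}\,z|<\eta\}$ is not all that Theorem 2.1 of \cite{KMS17} needs. It controls the Fourier inversion integral only near the real axis. You also need the rate (A3) and, above all, the tail condition (A4): $\sup_{\beta\in K}e^{-e^\beta\mathrm{h}_n}\int_a^\pi|\cL_n(\beta+it)|\,dt=o(\mathrm{h}_n^{-r})$ for every $a\in(0,\pi)$ and $r\in\N$. For $t\in[a,\pi]$ the martingale $M_n(\beta+it)$ does not converge. One has to show $|M_n(\beta+it)|=O(e^{(1-\cos t)e^\beta\mathrm{h}_n}n^{-\vp})$ uniformly, with a random exponent. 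The paper does this in Subsection \ref{sous-section concentration}, using:
\begin{enumerate}[(i)]
\item conditional $L^p$ bounds on $M_{2n}-M_n$ normalised by $e^{(e^{p\beta+(p-1)\vp}-pe^\beta\cos t)\mathrm{h}_n}$;
\item the exponential moment bound $\E e^{\alpha\mathrm{h}_n}=O(n^{\alpha+\vp})$ (Lemma \ref{lemme moment exponentiel h n});
\item H\"older's inequality and Lemma \ref{lemme 37}(i) with $\alpha>0$.
\end{enumerate}
Nothing in your plan plays this role.

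The randomness of $\mathrm{h}_n$ also breaks your Markov bound in Step 4. Since $\E X_n(e^u)=\E C_n(u)=\E e^{e^u\mathrm{h}_n+O(1)}$, the expression $e^{S_k+e^u\mathrm{h}_n-uk}$ is not a bound on a probability. The deterministic estimate $\mathrm{h}_n\le\log(n)/c+O(1)$ only yields $d(\T_n)\lesssim(e/c)\log n$. Argue pathwise instead, as the paper does. For real $u\in(1,2)$, $M_n(u)$ is a nonnegative martingale, hence a.s.\ bounded. So a.s.\ $\mathbb{L}_n(k)\le e^{S_k-uk}\cL_n(u)\le e^{S_k-uk+e^u\mathrm{h}_n+O(1)}$. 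Conclude with $\mathrm{h}_n\sim\log n$ and the fact that $\mathbb{L}_n(k_0)=0$ forces $\mathbb{L}_n(k)=0$ for all $k\ge k_0$.
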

Actually, the set $\mathcal{Z}$ is obtained as the set of zeros on $(-\infty, 1)$ of a random analytic function $z\mapsto W_\infty(z)$ such that $W_\infty(0) > 0$ almost surely. A notable feature of this result is the appearance of the term $S_k$ in the scaling limit of the profile, which was not present in the case of uniform random recursive trees or weighted random recursive trees (see Theorem 5 of \cite{Sen21}). For instance, if the weights are i.i.d.\@, then the central limit theorem yields that the fluctuations of $S_{\lfloor \log n \rfloor}$ are of order $\sqrt{\log n}$, in contrast to the classical $\log \log n$ corrections observed in branching random walks and random recursive trees. Another difference with usual random recursive trees is that the profile may vary significantly from one depth to the next one. Indeed, the first point of the above theorem shows that a.s.\@ $\mathbb{L}_n(k+1)/\mathbb{L}_n(k) \to f(k)$ uniformly in $k$ such that $\vert k - \mathrm{h}_n\vert \le C \sqrt{\log n}$ for any fixed $C>0$. Note that the last point of the above theorem generalises Theorem 1.4 of \cite{LLLM26}.

\begin{remark}The precise behaviour of $\mathrm{h}_n$ heavily depends on the choice of $f$, but we show in Lemma \ref{lemme equivalent ps h n} that a.s.\@ $\mathrm{h}_n = \log n+ O(\max_{0 \le k \le d(\T_n)} \vert S_k\vert)$ as $n\to \infty$. For example, when $f$ is periodic, we get $\mathrm{h}_n= \log n+O(1)$ a.s.\@ and when the weights are i.i.d.\@ we have $\mathrm{h}_n = \log n + O(\sqrt{\log (n) })$ in probability (see Remark \ref{more precise behaviour of h n} for the more precise behaviour of $\mathrm{h}_n$ in this latter case).
\end{remark}
The proof of Theorem \ref{theoreme profil} also gives the depth of a typical vertex, whose first-order behaviour is the same as for uniform random recursive trees. For all $n\ge 0$, conditionally on $\T_n$, let $V_n$ be a uniform random vertex of $\T_n$.
\begin{theorem}\label{th hauteur sommet typique}
	Under the same assumptions as in Theorem \ref{theoreme profil},
	\[
	\frac{d(V_n)}{\log n} \mathop{\longrightarrow}\limits_{n\to \infty}^{(\P)} 1.
	\]
	More precisely, for all $\vp>0$, almost surely, the number of vertices at depth between $(1-\vp)\log n $ and $(1+\vp )\log n $ is equivalent to $n$.
\end{theorem}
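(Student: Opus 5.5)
We derive the statement from the first (uniform) estimate of Theorem \ref{theoreme profil} together with $\mathrm{h}_n/\log n\to 1$ a.s. The first claim follows from the second. Indeed, conditionally on $\T_n$, the probability that $d(V_n)\in[(1-\vp)\log n,(1+\vp)\log n]$ equals $(n+1)^{-1}$ times the number of such vertices. If that number is $\sim n$ a.s., this conditional probability tends to $1$ a.s. Dominated convergence then gives convergence in probability. So it suffices to show that a.s. $\sum_{|k-\log n|>\vp\log n}\mathbb{L}_n(k)=o(n)$, since $\sum_k \mathbb{L}_n(k)=n+1$.

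\textbf{Relating the profile to a Gaussian window.} The first estimate of Theorem \ref{theoreme profil} reads $\mathbb{L}_n(k)=e^{S_k}\frac{e^{\mathrm{h}_n}}{\sqrt{\mathrm{h}_n}}\bigl(\frac{W_\infty(0)}{\sqrt{2\pi}}e^{-(k-\mathrm{h}_n)^2/(2\mathrm{h}_n)}+o(1)\bigr)$, uniformly in $k$. Since $\mathrm{h}_n\sim\log n$, the window $|k-\log n|\le\vp\log n$ contains $|k-\mathrm{h}_n|\le(\vp/2)\mathrm{h}_n$ for large $n$. On the other hand, $S_k=o(k)$, so $e^{S_k}=e^{o(\log n)}$ for $k=O(\log n)$. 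Deducing $\sum_k\mathbb{L}_n(k)\approx n$ directly from this is delicate, because $e^{S_k}$ is not smooth. I would therefore avoid needing the normalisation and bound the outside mass directly.

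\textbf{Bounding vertices far from $\mathrm{h}_n$.} For $k\le d(\T_n)\le (e+1)\log n$ (by the last point of Theorem \ref{theoreme profil}) with $|k-\mathrm{h}_n|\ge(\vp/2)\mathrm{h}_n$, the estimate gives
\[
\mathbb{L}_n(k)\le e^{S_k}\frac{e^{\mathrm{h}_n}}{\sqrt{\mathrm{h}_n}}\Bigl(Ce^{-\vp^2\mathrm{h}_n/8}+o(1)\Bigr).
\]
The $o(1)$ term is the obstacle. Summed over $O(\log n)$ values of $k$, it only gives $o(n^{1+o(1)}\sqrt{\log n})$, which is too weak.

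To overcome this, I would instead use the second, large-deviation statement of Theorem \ref{theoreme profil}, or better the underlying martingale bound. The natural route is a first-moment/martingale estimate. One would want the quantity $W_n(z)=\sum_k \mathbb{L}_n(k)e^{zk-S_k}\,e^{-(e^z-1)\mathrm{h}_n}$ (or its analogue from the paper's proof) to be a martingale converging a.s. for real $z$ near $0$. Granting this, Chernoff's bound gives, for small $z>0$,
\[
\sum_{k\ge(1+\vp)\mathrm{h}_n}\mathbb{L}_n(k)e^{-S_k}\le W_n(z)\,e^{(e^z-1)\mathrm{h}_n-z(1+\vp)\mathrm{h}_n}=O\bigl(e^{\mathrm{h}_n(1-\delta)}\bigr)
\]
for some $\delta>0$. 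The symmetric bound holds for $z<0$ and $k\le(1-\vp)\mathrm{h}_n$.

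Because $|S_k|=o(\log n)$ uniformly for $k\le(e+1)\log n$, reinserting $e^{S_k}$ costs only $e^{o(\log n)}$. We also need $e^{\mathrm{h}_n}=n^{1+o(1)}$. Together these give an outside mass of $n^{1-\delta+o(1)}=o(n)$. If the paper's martingale is normalised as $e^{\mathrm{h}_n}$ times a product of factors $1+(e^z-1)/Z_k$, I would use the corresponding exact normaliser. Its logarithm still equals $(e^z-1)\mathrm{h}_n+O(1)$, since $\sum 1/Z_k^2<\infty$.

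\textbf{Main obstacle.} The key step is the uniform-in-$k$ control of the tails, i.e. establishing (or citing from the proof of Theorem \ref{theoreme profil}) a.s. boundedness of the exponential martingale for real $z$ in a neighbourhood of $0$. Everything else is bookkeeping with $S_k=o(k)$ and $\mathrm{h}_n\sim\log n$.
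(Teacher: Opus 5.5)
Your final argument is essentially the paper's proof: a Chernoff bound on $\cL_n(z)=C_n(z)M_n(z)=\sum_k \mathbb{L}_n(k)e^{zk-S_k}$ for real $z$ slightly above and slightly below $0$, combined with $\log C_n(z)=e^z\mathrm{h}_n+O(1)$ (Lemma \ref{lemme equivalent C n}), $\mathrm{h}_n\sim\log n$, $S_k=o(k)$ and $d(\T_n)=O(\log n)$, shows that the mass outside the window is $n^{1-\delta+o(1)}=o(n)$ (the paper uses thresholds $e^z\log n$, which gives the exponent $(1-z)e^z<1$). Two small corrections: the correct normalisation is $e^{-e^z\mathrm{h}_n}$, not $e^{-(e^z-1)\mathrm{h}_n}$, since with yours $W_n(0)=\cL_n(0)=n^{1+o(1)}$ is unbounded, although the bound $O(e^{(1-\delta)\mathrm{h}_n})$ you state is exactly what the correct normalisation gives; and the step you call the main obstacle is immediate, because for real $z$ the martingale $M_n(z)$ is nonnegative and therefore converges almost surely.
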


One can wonder what happens when \eqref{condition ergodicite} is not satisfied. In general, for weights bounded from above and from below, the scaling limit of the depth does not hold. We provide a simple counterexample in the general case with a function $f$ which only takes two values.
\begin{proposition}\label{lemme contre-exemple}
	There exists a weight function $f$ taking its values in $\{1,3\}$ such that, almost surely,
	\[
	\liminf_{n\to \infty} \frac{d(\T_n)}{\log n} \le \frac{5}{2}
	\qquad \text{and} \qquad \limsup_{n\to \infty} \frac{d(\T_n)}{\log n} \ge e.
	\]
\end{proposition}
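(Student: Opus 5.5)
We construct $f$ blockwise. Fix a fast-growing sequence of depth thresholds $1 = D_0 < D_1 < D_2 < \cdots$ with $D_{j+1}/D_j \to \infty$. On $[D_{2j}, D_{2j+1})$ we set $f = 1$, and on $[D_{2j+1}, D_{2j+2})$ we set $f = 3$. The idea is that on long stretches the tree behaves locally like a tree whose weights satisfy \eqref{condition ergodicite}. Two regimes then appear.

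\textbf{The $\limsup$.} Suppose the deep part of the tree (depths near $e\log n$) sits inside a long block where $f$ is constant. Then every vertex at those depths has the same weight. Locally the tree is a recursive tree, so the depth should reach $e\log n$. To make this precise, we compare with a tree whose weights are $1$ at depths $<D$ and constant beyond. Concretely, we apply the last point of Theorem \ref{theoreme profil} to an auxiliary function $\tilde f$ that agrees with $f$ up to depth $D_{2j+2}$ and is constant afterwards. The two trees can be coupled so that they coincide until the first time a vertex reaches depth $D_{2j+2}$. Along a subsequence $n_j$ with $e\log n_j \approx D_{2j+2}/(1+\delta)$, the auxiliary theorem gives $d(\T_{n_j}) \ge (e-\vp)\log n_j$, provided the scaling of $\tilde f$ has already kicked in. This is ensured by letting the blocks grow fast and taking a diagonal argument in $\vp$. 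It also requires $\mathrm{h}_n \sim \log n$ to hold for $\tilde f$, which it does, since $\tilde f$ satisfies \eqref{condition ergodicite}.

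\textbf{The $\liminf$.} Here we want the deep frontier to lie just after a transition from weight $3$ to weight $1$. If vertices below depth $D$ have weight $3$ and vertices beyond have weight $1$, then attachment beyond $D$ is slowed by a factor $3$ relative to the bulk. The plan is a first-moment computation. The expected number of vertices at depth $k$ is bounded by the number of increasing paths, weighted by the ratio of the weight of the parent to $Z_m$. With $Z_m \approx 3m$ (most mass at weight $3$), a vertex at depth $D+r$ contributes
\[
\frac{1}{3^{r}}\cdot\frac{(\log n)^{D+r}}{(D+r)!}
\]
approximately, from the sum over paths $\prod 1/m$. Indeed, the factor $3$ in the weights before $D$ cancels against $Z\approx 3m$, while the steps after $D$ have weight $1/(3m)$. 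Optimising over $D = a\log n$ and $r = b\log n$ gives an exponent of the form
\[
(a+b) - (a+b)\log(a+b) - b\log 3 + \dots .
\]
Choosing the block boundary $D_{2j+1} = a\log n_j$ with $a$ near $e$, the maximal depth reachable is at most $D + b^*\log n$. Here $b^*$ is small, and the total stays $\le \tfrac{5}{2}\log n_j$ once we also use that the profile at depth $a\log n$ is already subcritical, so that we can take $a < 5/2$ appropriately. The numbers here are to be tuned; $5/2$ is presumably a comfortable non-optimal bound. A Markov inequality, together with Borel--Cantelli along $n_j$, then gives the $\liminf$.

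\textbf{Main obstacle.} The hardest step is controlling $Z_m$ and the path-counting sum uniformly when $f$ is not ergodic. In particular, in the $\liminf$ computation we must check that the bulk's weight is $3$ at the relevant times. To guarantee this, we let the weight-$3$ block cover all depths in $[\epsilon\log n_j, D_{2j+1})$, which contains the bulk by Theorem \ref{th hauteur sommet typique} for the comparison tree. This gives $Z_m \ge (3-\vp)m$ for $m$ in a range $[n_j^{\delta}, n_j]$, and earlier times contribute negligibly.
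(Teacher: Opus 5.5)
Your $\limsup$ half is essentially the paper's argument and is fine. Take the auxiliary weight equal to $f$ below the current constant block and constant afterwards, so that it does not depend on the next threshold, and choose the thresholds inductively. Your exponent $(a+b)-(a+b)\log(a+b)-b\log 3$ is also the correct first-moment exponent for the $\liminf$.

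The parameter choice you then make does not work, however. Put the $3\to1$ transition at $a\log n_j$ with $a$ near $e$. Then the root of $\phi(x)\coloneqq(a-x)\log 3+x-x\log x$ beyond $a$ is near $e$, not below $\frac52$. Moreover, the profile at depth $a\log n_j$ is not subcritical for any $a<e$, since its exponent $\phi(a)=a-a\log a$ is positive. What actually produces $\frac52$ is the $-b\log 3$ penalty acting on a still-supercritical profile. You need $\phi(5/2)<0$, i.e.\ roughly $a<2.31$, together with $a>1$ so that the bulk stays in the weight-$3$ zone up to time $n_j$. The paper takes $a=2$, where $\phi(5/2)\approx-0.34$; it only checks the non-optimised $z=3/2$, namely $2\log 3-\frac{15}{4}+\frac{e^{3/2}}{3}<0$. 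Note also that with your convention $D_{2j+1}$ is a $1\to3$ boundary, so the $\liminf$ has to use $D_{2j+2}$.

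The more serious gap is the claim that $Z_m\ge(3-\vp)m$ for $m\in[n_j^\delta,n_j]$ ``by Theorem \ref{th hauteur sommet typique} for the comparison tree''. Since $a<e$ is forced, the comparison tree (constant weight $3$ past the previous block) already reaches depth $a\log n_j$ around time $n_j^{a/e}\ll n_j$. So the coupling you use for the $\limsup$, where the trees coincide until a vertex reaches the transition depth, gives no information on the bulk of $\T_m(f)$ for $m\in[n_j^{a/e},n_j]$. Nor can Theorem \ref{th hauteur sommet typique} be applied to an ergodic function agreeing with $f$ past the transition: that transition sits at $a\log n_j$, and the theorem is not uniform in the weight function.

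The missing idea is the monotone coupling of Lemma \ref{lemma coupling}. Since $f_n\le f_\infty$ with equality up to depth $2\lfloor\log n\rfloor$, one can couple so that every vertex attached at depth at most $2\lfloor\log n\rfloor$ in $\T(f_\infty)$ is attached to the same vertex in $\T(f_n)$. This holds for all times up to $n$, even after the two trees differ higher up. It transfers the bulk statement (Lemma \ref{lemme hauteur sommet typique}) and yields $\mathrm{h}^n_n\le(\frac13+\vp)\log n$ with high probability.

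Finally, plugging $Z_m\approx 3m$ into an expected path count is not a valid bound, since $Z_m$ is random and correlated with the paths; you must restrict to the good event. The paper does this through $\cL^n_n(z)=C^n_n(z)M^n_n(z)$ and $\vert\log C^n_n(z)-e^z\mathrm{h}^n_n\vert<b$. This gives $\E\big(\mathbb{L}^n_n(k)\mathbf{1}_{\{\mathrm{h}^n_n\le(1/3+\vp)\log n\}}\big)\le e^{S^n_k-zk+e^z(1/3+\vp)\log n+b}\,\E\big(M^n_{n_0}(z)\big)$. A stopping-time version of your path count would also do.
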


\paragraph{Another related general model of recursive trees.} As mentioned at the beginning of the introduction, a natural generalisation of random recursive trees is the model of weighted recursive trees, first introduced in \cite{BV06}, where the vertex $v_n$ added at time $n$ has a weight $w_n>0$. The study of this model is much more advanced than the one of depth weighted trees, see e.g.\@ \cite{MU19, HI20, Sen21, PS22, PS24}. See also \cite{BMMU24} for a result which can be reformulated in terms of weighted recursive trees, as well as \cite{ELO23, Iye24, Lod24} for some results in the case of i.i.d.\@ weights. In \cite{Sen21}, the scaling limit of the height and that of the profile are established under general conditions. It is also worth mentioning that \cite{Sen21} presents a connection to affine preferential attachment trees which enables him to transfer the results for weighted recursive trees to preferential attachment trees.

\paragraph{Techniques.} Our approach is inspired by \cite{Sen21}. We first introduce appropriately defined martingales to study a weighted Laplace transform of the profile. Using the general framework of \cite{KMS17}, we then establish an Edgeworth expansion of the profile, which is a very precise control of the asymptotic behaviour of the profile. We refer the reader to \cite{KMS17} for a broad presentation of this tool and for related works. Note that, despite its generality, the road opened by \cite{KMS17} has been used in very few papers (to our knowledge, the only paper using the general Edgeworth expansion of \cite{KMS17} is \cite{Sen21}, and closely related ideas appear in \cite{KKS25}). The purpose of our paper is also to promote the use of \cite{KMS17}. We state this expansion in Subsection \ref{sous-section preuve du profil}. It is more precise than our Theorem \ref{theoreme profil} but we did not choose to include it in the introduction since it is expressed in a less explicit way. We do not rely on the coupling with continuous time processes used by \cite{LMW20, LLLM26} and our paper is independent of their results. We follow roughly the same approach as in \cite{Sen21, KKS25}, first proving the convergence of the martingales to a random analytic function, then studying the zeros of the analytic function to finally get back to the profile of the tree. Contrary to \cite{Sen21}, we do not need to re-weight the profile since in depth weighted trees, the weights only depend on the depth. However we do need to overcome new difficulties. The first obstacle is the fact that the quantities $Z_n$ and $\mathrm{h}_n$ are random, they depend on the tree $\T_n$, and their asymptotic behaviour is not known at the beginning. To deal with this obstacle, we first establish rough bounds in expectation to control the martingale and then improve them to more precise almost sure asymptotics. Another complication arises in the study of the zeros of the limit, i.e.\@ the zeros of the function $W_\infty$ mentioned below Theorem \ref{theoreme profil}. The standard way to show that $W_\infty$ has no zero on the real axis is to exploit the self-similarity: showing that the limit satisfies some kind of fixed point equation; the idea originates from \cite{BG79} in the context of branching random walks and some analogous arguments for random trees can be found in Lemma 21 of \cite{Sen21} and in Subsection 4.7 of \cite{KKS25}. But in our framework, the limit of the martingale does not satisfy such a fixed point equation (except if one makes further assumptions such as assuming that $f$ is periodic). We therefore use a different method: by controlling the logarithm of our martingales, we prove that $W_\infty$ has almost surely at most countably many zeros which are isolated. We leave the question of proving or disproving that $W_\infty$ does not have any zeros as an open problem.
\paragraph{Further directions.} The general Edgeworth expansion only applies to trees with logarithmic depth. In this context, the (weighted) Laplace transform and the martingales that we introduce could be useful in a more general setting as soon as one understands the behaviour of the sum of the weights $\sum_{v \in \T_n} f(d(v))$. It would then seem feasible to deal with slowly growing (see Theorem 1.6 of \cite{LLLM26}) or polynomial (see Theorem 2.3 (b) of \cite{LMW20}) weights using the same techniques. Another question which was investigated in \cite{PS22} in the case of weighted recursive trees is the next order in the asymptotic behaviour of the depth. Besides, another question of interest is the asymptotic behaviour of the distance between two typical vertices. Under the conditions of Theorem \ref{theoreme profil}, we expect this distance to be equivalent in probability to $2 \log n$.
\paragraph{Outline.} In Section \ref{section martingale}, we introduce a weighted Laplace transform of the profile and its associated martingale. The next section, Section \ref{section profil}, is the main part of the paper and is devoted to the proof of Theorem \ref{theoreme profil}. We show the convergence of the martingales in Subsection \ref{sous-section controle martingale}. Then, in Subsection \ref{sous-section zeros de la limite}, we study the zeros of the limit. Next, Subsection \ref{sous-section concentration} proves that the martingale concentrates on the real axis. In Subsection \ref{sous-section preuve du profil}, we obtain the general Edgeworth expansion of the profile and deduce in particular Theorems \ref{theoreme profil} and \ref{th hauteur sommet typique}. Finally, Section \ref{section contre-exemple} provides the counterexample mentioned in Proposition \ref{lemme contre-exemple}. 
\section{Laplace transform and martingales}\label{section martingale}
Let us introduce the following weighted Laplace transform of the profile, defined for all $n\in \N$ and $z \in \C$ by
\[
\cL_n(z) \coloneqq \sum_{v \in \T_n} \frac{1}{\prod_{k=0}^{d(v)-1} f(k)} e^{zd(v)}  = \sum_{v \in \T_n} e^{zd(v)-S_{d(v)}} = \sum_{k=0}^{\infty} \mathbb{L}_n(k)e^{zk-S_k},
\]
where the sum runs over all the vertices of $\T_n$. Let $(\mathcal{F}_n)_{n\ge 0}$ be the natural filtration associated to $(\T_n)_{n\ge 0}$.
\begin{lemma}\label{lemme martingale}
	For all $z \in \C$, for all $n \in \N$,
	\[
	\E\left(\left. \cL_{n+1}(z)\right\vert \mathcal{F}_n \right) = \left(1 + \frac{e^z}{Z_n}\right) \cL_n(z).
	\]
\end{lemma}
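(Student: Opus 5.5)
The plan is a one-step computation. Conditionally on $\mathcal{F}_n$, the tree $\T_{n+1}$ is $\T_n$ together with one new vertex $v_{n+1}$ attached to $V_n$. Hence $d(v_{n+1}) = d(V_n)+1$, and we can write
\[
\cL_{n+1}(z) = \cL_n(z) + e^{z(d(V_n)+1) - S_{d(V_n)+1}}.
\]
Since $\cL_n(z)$ is $\mathcal{F}_n$-measurable, it suffices to compute the conditional expectation of the added term. For this we use the attachment rule $\P(V_n = v\vert \mathcal{F}_n) = f(d(v))/Z_n$ for $v\in\T_n$; here $\T_n$ determines $Z_n$, which is $\mathcal{F}_n$-measurable.

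The key identity is $S_{d+1} = S_d + \log f(d)$, i.e.\ $e^{-S_{d+1}} = e^{-S_d}/f(d)$. Then
\[
\E\left(e^{z(d(V_n)+1) - S_{d(V_n)+1}} \,\middle\vert\, \mathcal{F}_n\right) = \sum_{v\in \T_n} \frac{f(d(v))}{Z_n}\, e^{z} e^{zd(v)} \frac{e^{-S_{d(v)}}}{f(d(v))} = \frac{e^z}{Z_n}\cL_n(z).
\]
The weight $f(d(v))$ cancels exactly; this cancellation is the reason for the normalisation by $\prod_{k<d(v)} f(k)$ in $\cL_n$. Adding $\cL_n(z)$ gives the claim.

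There is no real obstacle, only a few checks. The sums are finite, since $\T_n$ has $n+1$ vertices, so integrability is trivial even for complex $z$. Conditioning on $\mathcal{F}_n$ is the same as conditioning on $\T_n$, because the attachment law depends only on $\T_n$. Finally, the convention $S_0=0$ makes the identity hold at the root as well.
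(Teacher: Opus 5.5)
Your proposal is correct and follows essentially the same one-step computation as the paper. You condition on the attachment vertex, use $e^{-S_{d+1}} = e^{-S_d}/f(d)$ so that the weight $f(d(v))$ cancels, and add back the $\mathcal{F}_n$-measurable term $\cL_n(z)$.
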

\begin{proof}
	Using the fact that conditionally on $\T_n$, the vertex $v_{n+1}$ which is attached at time $n+1$ is attached to the vertex $v \in \T_n$ with probability proportional to $f(d(v))$, we get
	\begin{align*}
		\E\left(\left. \cL_{n+1}(z)\right\vert \mathcal{F}_n \right)&=
		\cL_n(z) + \sum_{v \in \T_n} \frac{f(d(v))}{Z_n} \frac{1}{\prod_{k=0}^{d(v)+1-1} f(k)} e^{z (d(v)+1)}
		\\
		&=
		\left(1+ \frac{e^z}{Z_n} \right) \cL_n(z),
	\end{align*}
	thus ending the proof.
\end{proof}
From now on, we work under the assumption that $c\le f\le 1/c$ for some constant $c>0$. Recall that $n_0= \lfloor e^2/c \rfloor +1$. Let us consider the two complex half-planes
\[
\mathcal{E} \coloneqq \{ z \in \C; \ \Re(z) <1\} \qquad \text{and} \qquad
\mathcal{E}' \coloneqq \{ z\in \C; \ \Re(z) < 2\}.
\]
Note that for all $z \in \mathcal{E}'$, for all $n\ge n_0$, we have $1+ e^z/Z_n \neq 0$ since the real part is strictly positive. For all $n\ge n_0$, for all $z\in \C$, let
\[
C_n(z) \coloneqq  \prod_{k=n_0}^{n-1} \left( 1 + \frac{e^z}{Z_k}\right).
\]
Then, by Lemma \ref{lemme martingale}, for all $z\in \mathcal{E}'$, the process $(M_n(z))_{n\ge n_0}$ defined by setting for all $n\ge n_0$,
\[
M_n(z) \coloneqq \frac{1}{C_n(z)} \cL_n(z),
\]
is a martingale with respect to $(\mathcal{F}_n)_{n\ge0}$.

\begin{lemma}\label{lemme equivalent C n}
	Fix a compact subset $K \subset \mathcal{E}'$. Then,
	\[
	C_n(z) =  \exp\left(e^z\mathrm{h}_n +O(1) \right) \text{ and more precisely }
	C_n(z) = \exp\left( e^z \mathrm{h}_n+ c(z) +o(1) \right)
	\]
	uniformly in $z \in K$ as $n\to \infty$, where the $O(1)$ is a sequence of random variables bounded in absolute value by a (deterministic) constant which only depends on $K$ and $f$, where $c(z)$ is a random analytic function on $\mathcal{E}'$ and where the $o(1)$ is a sequence of random variables whose absolute value is smaller than a deterministic sequence converging to zero. In particular, if $K \subset \mathcal{E}$, then for all $p \in (1,2)$ and $\vp \in [0, 2-p)$,
	\[
	\frac{C_n(p \Re(z) + \vp)}{\vert C_n(z)\vert^p} = \exp\left( \left(e^{p \Re(z)+ \vp} - p \Re(e^z)\right) \mathrm{h}_n + O(1)\right)
	\]
	uniformly in $z \in K$ as $n\to \infty$ (where the $O(1)$ is a sequence of random variables bounded in absolute value by a deterministic constant which only depends on $K, p, \vp$ and $f$).
\end{lemma}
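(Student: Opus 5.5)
We take logarithms and Taylor expand each factor. Because $Z_k \ge c(k+1)$ and $\vert e^z\vert \le e^{\Re z} < e^2$ on $\mathcal{E}'$, the choice $n_0 = \lfloor e^2/c\rfloor+1$ gives, for $k\ge n_0$,
\[
\left\vert \frac{e^z}{Z_k}\right\vert \le \frac{e^2}{c(k+1)} < 1 .
\]
The principal branch $\log(1+w)$ is therefore well defined and analytic in $z$ for each factor, so we can write
\[
\log C_n(z) = \sum_{k=n_0}^{n-1} \log\left(1+\frac{e^z}{Z_k}\right) = e^z \mathrm{h}_n + \sum_{k=n_0}^{n-1} r_k(z),
\qquad r_k(z) \coloneqq \log\left(1+\frac{e^z}{Z_k}\right) - \frac{e^z}{Z_k}.
\]
Fix a compact $K\subset\mathcal{E}'$. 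Then $\sup_K \Re(z) \le 2-\delta$ for some $\delta>0$, so $\vert e^z/Z_k\vert \le e^{2-\delta}/(c(k+1)) \le \rho < 1$ uniformly in $k\ge n_0$ and $z\in K$. The elementary bound $\vert \log(1+w)-w\vert \le C_\rho \vert w\vert^2$ for $\vert w\vert\le \rho$ then gives the deterministic estimate
\[
\vert r_k(z)\vert \le \frac{C_K}{(k+1)^2}.
\]

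This estimate gives both statements. The partial sums of $r_k$ are bounded by the deterministic constant $C_K\sum_k (k+1)^{-2}$, which proves $C_n(z)=\exp(e^z\mathrm{h}_n+O(1))$. Moreover, $\sum_k r_k(z)$ converges uniformly on $K$, with tail bounded by the deterministic sequence $C_K\sum_{k\ge n}(k+1)^{-2}\to 0$. Each $r_k$ is analytic on $\mathcal{E}'$, and uniform convergence on compacts preserves analyticity, so $c(z)\coloneqq\sum_{k\ge n_0} r_k(z)$ is a random analytic function on $\mathcal{E}'$. We only need care with the choice of $\log$: the sum of principal logarithms is a legitimate logarithm of the product, so exponentiating gives $C_n(z)$ exactly.

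For the ratio, take $K\subset\mathcal{E}$, $p\in(1,2)$ and $\vp\in[0,2-p)$. The point $p\Re(z)+\vp$ is real, and since $\Re z\le 1-\delta'$ on $K$,
\[
p\Re(z)+\vp \le p(1-\delta')+\vp < p+\vp < 2
\]
uniformly on $K$. The set $K'=\{p\Re(z)+\vp: z\in K\}$ is therefore a compact subset of $\mathcal{E}'$. Applying the first part on $K'$ gives
\[
C_n(p\Re(z)+\vp) = \exp\bigl(e^{p\Re(z)+\vp}\mathrm{h}_n+O(1)\bigr).
\]
Applying it on $K$ and taking moduli gives
\[
\vert C_n(z)\vert^p = \exp\bigl(p\,\Re(e^z\mathrm{h}_n + O(1))\bigr) = \exp\bigl(p\Re(e^z)\mathrm{h}_n + O(1)\bigr),
\]
using that $\mathrm{h}_n$ is real. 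Dividing the two expressions gives the claim, with the constants depending only on $K,p,\vp,f$ through $c$ and $\delta,\delta'$.

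There is no real obstacle here. The one point to check is that the Taylor bound holds uniformly: on compacts of $\mathcal{E}'$ the quantity $e^{\Re z}$ stays below $e^2$ by a fixed margin, and $n_0$ was chosen exactly so that $\vert e^z/Z_k\vert$ stays below $1$.
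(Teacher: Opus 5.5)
Your proof is correct and follows essentially the paper's route. You expand $\log(1+e^z/Z_k)=e^z/Z_k+r_k(z)$ with a deterministic $O(1/k^2)$ remainder and define $c(z)$ as the locally uniformly convergent, hence analytic, sum of the $r_k$. You then get the ratio estimate by applying the first part on the real compact $\{p\Re(z)+\vp\}$ and taking moduli. The only difference is cosmetic: you observe that $|e^z/Z_k|<1$ already holds for $k\ge n_0$ and use principal logarithms throughout with a compact-dependent Taylor constant. The paper instead splits at $n_1=\lfloor 2e^2/c\rfloor+1$ to get $|e^z/Z_k|<1/2$ and absorbs the first few factors into an analytic logarithm $c_1(z)$, which your version makes unnecessary.
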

\begin{proof}
	Let $n_1 \coloneqq \lfloor 2e^2/c \rfloor +1$, so that for all $n\ge n_1$, for all $z \in \mathcal{E}'$, 
	we have $
	\left\vert  {e^z}/({nc}) \right\vert <{1}/{2}$. 
	Note that for all $z \in \mathcal{E}'$, for all $n\ge n_1$, we have $\vert e^z/Z_n\vert <1/2$. For all $z\in \C$ such that $\vert z\vert <1$, we write $\log (1+z) = \sum_{n\ge 1} {(-1)^{n-1}}z^n/n$. Then, for $n\ge n_1$, uniformly in $z \in K$,
	\begin{align*}
		C_n(z)&=\prod_{k=n_0}^{n_1-1} \left( 1+ \frac{e^z}{Z_k} \right)  \times   \prod_{k=n_1}^{n-1}  \left( 1+ \frac{e^z}{Z_k} \right)  \\
		&=e^{O(1)} \times \exp\left( \sum_{k=n_1}^{n-1} \log \left(1+ \frac{e^z}{Z_k}  \right) \right)\\
		&= \exp\left( \sum_{k=n_1}^{n-1} \frac{e^z}{Z_k} +O(1) \right),
	\end{align*}
	where the $O(1)$ is a sequence of random variables bounded in absolute value by a constant which only depends on $f$ and $K$, where in the last line we used the fact that $\vert \log (1+x) - x\vert\le  \vert x\vert^2$ for all $\vert x \vert \le 1/2$ together with the fact that
	\begin{equation}\label{eq cv somme des un sur W}
	\sum_{k=n_1}^\infty \frac{\vert e^{2z}\vert}{Z_k^2}\le \left(\sup_{z \in \mathcal{E}'}\vert e^{2z}\vert\right) \sum_{k=n_1}^\infty \frac{1}{c^2k^2}.
	\end{equation}
	Using \eqref{eq h n}, we deduce that $C_n(z)=\exp(e^z \mathrm{h}_n +O(1))$ uniformly in $z \in K$, where the $O(1)$ is a sequence of random variables bounded in absolute value by a constant which only depends on $f$ and $K$.
	
	For the more precise asymptotic behaviour, we proceed as follows. Recall that for all $z \in \mathcal{E}'$, for all $n\ge n_0$, we have $1+e^z/Z_n\neq 0$. Therefore, there is an analytic function $z\mapsto c_1(z)$ on $\mathcal{E}'$ such that for all $z \in \mathcal{E}'$, 
	\[
	\prod_{k=n_0}^{n_1-1} \left( 1+ \frac{e^z}{Z_k} \right)  = e^{c_1(z)}.
	\]
	We then set for all $z \in \mathcal{E}'$,
	\[
	c(z) \coloneqq c_1(z) - \sum_{k=n_0}^{n_1 -1} \frac{e^z}{Z_k}+ \sum_{k=n_1}^\infty \left( \log \left(1+ \frac{e^z}{Z_k} \right) - \frac{e^z}{Z_k}\right),
	\]
	where the series converges uniformly on $\mathcal{E}'$ by the inequality $\vert \log (1+x) - x\vert\le  \vert x\vert^2$ for all $\vert x \vert \le 1/2$ and by \eqref{eq cv somme des un sur W}.
	
	One can then write
	\begin{align*}
	C_n(z) &= \exp\left(c_1(z) + \sum_{k=n_1}^{n-1}  \log \left(1+ \frac{e^z}{Z_k} \right) \right)\\
	&=\exp\left(c(z) + \sum_{k=n_0}^{n_1 -1} \frac{e^z}{Z_k} +  \sum_{k=n_1}^{n -1} \frac{e^z}{Z_k}- \sum_{k=n}^\infty \left( \log \left(1+ \frac{e^z}{Z_k} \right) - \frac{e^z}{Z_k}\right) \right)\\
	&=\exp\left( c(z) + e^z \mathrm{h}_n -\sum_{k=n}^\infty \left( \log \left(1+ \frac{e^z}{Z_k} \right) - \frac{e^z}{Z_k}\right) \right),
	\end{align*}
	and we recognize the remainder of the uniformly converging series in the definition of $c(z)$. This proves the first part of the lemma. The second part of the lemma is a simple consequence of the first part, using the fact that $\vert e^z\vert  = e^{\Re(z)}$ for all $z \in \C$.
\end{proof}

\section{Depth profile for weights satisfying \eqref{condition ergodicite}}\label{section profil}
In this section, we assume that condition \eqref{condition ergodicite} is in force and we aim at proving Theorem \ref{theoreme profil}. 
\subsection{Control of the martingales $M_n(z)$}\label{sous-section controle martingale}
In this subsection, we give some estimates for the martingales $(M_n(z))_{n\ge n_0}$ and prove their convergence for $z$ close enough to the real axis. The following lemma controls the moments of the martingales.
\begin{lemma}\label{lemme moment Lp martingale}
	For all $p \in (1,2)$, for all $\vp \in (0, 2-p)$ , there exists a constant $C_\vp$ depending only on $f$ and $\vp$ such that we have for all $n\ge n_0$ and $z \in \mathcal{E}$,
	\[
	\vert M_n(z)\vert^p\le C_\vp \frac{n^{p-1} \cL_n(p \Re(z)+(p-1)\vp)}{\vert C_n(z)\vert^p}.
	\]
	Moreover, if we fix a compact $K\subset \mathcal{E}$, then
	\[
	\E\left( \frac{\cL_n(p \Re(z)+(p-1)\vp)}{\vert C_n(z)\vert^p} \right) \le  \exp\left(\left( e^{p \Re(z)+(p-1)\vp} - p\Re(e^z)\right) \log(n)/c + O(1)\right),
	\]
	uniformly in $z \in K$ as $n\to \infty$ where the $O(1)$ is a sequence bounded from above by a constant which only depends on $K$, $p$, $\vp$ and $f$.
\end{lemma}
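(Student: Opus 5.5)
The first inequality is deterministic and comes from H\"older's inequality; the second is obtained by computing the conditional expectation of $\cL_n(x)$ with $x$ real through Lemma \ref{lemme martingale}, together with the deterministic comparison of the products $C_n$ contained in Lemma \ref{lemme equivalent C n}.

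\textbf{First inequality.} Write $x=\Re(z)$. Since $\vert e^{zd(v)-S_{d(v)}}\vert = e^{xd(v)-S_{d(v)}}$, we have $\vert \cL_n(z)\vert \le \sum_{v\in\T_n} e^{xd(v)-S_{d(v)}}$. We factor each summand as
\[
e^{xd(v)-S_{d(v)}} = \left(e^{(x+\frac{p-1}{p}\vp)d(v)-S_{d(v)}/p}\right)\left( e^{-\frac{p-1}{p}\vp d(v)-\frac{p-1}{p}S_{d(v)}}\right)
\]
and apply H\"older with exponents $p$ and $q=p/(p-1)$. This gives
\[
\vert \cL_n(z)\vert^p \le \cL_n(px+(p-1)\vp)\left(\sum_{v\in\T_n} e^{-\vp d(v) - S_{d(v)}}\right)^{p-1}.
\]
The last sum has to be bounded by $C_\vp n$. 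The naive bound $e^{-S_k}\le c^{-k}$ is useless against $e^{-\vp k}$, so this is where condition \eqref{condition ergodicite} with $\ell=0$ enters: $S_k\ge -\vp k/2 - A_\vp$ for some constant $A_\vp$ depending on $f$ and $\vp$. Hence $e^{-\vp k-S_k}\le e^{A_\vp}$, and the sum is at most $e^{A_\vp}(n+1)$. Dividing by $\vert C_n(z)\vert^p$ yields the claim, with $C_\vp = (2e^{A_\vp})^{p-1}$.

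\textbf{Second inequality.} Set $y=px+(p-1)\vp<2$; since $x<1$ we have $y\in\mathcal E'$. Lemma \ref{lemme martingale} gives $\E(\cL_{k+1}(y)\vert\mathcal F_k)=(1+e^y/Z_k)\cL_k(y)$, and because $\cL_k(y)\ge 0$ and $Z_k\ge c(k+1)$, this is at most $(1+e^y/(c(k+1)))\cL_k(y)$. The difficulty is that $\vert C_n(z)\vert^p$ is random and depends on the whole path, so we cannot simply take expectations of the ratio. We bound it below deterministically instead. For $k\ge n_1$ we have $\Re\log(1+e^z/Z_k)\ge \Re(e^z)/Z_k - \vert e^z\vert^2/Z_k^2$.
\begin{itemize}
\item If $\Re(e^z)\ge 0$, then $1/Z_k\ge c/(k+1)$ gives $\vert C_n(z)\vert \ge \exp(c\,\Re(e^z)\log n - O(1))$.
\item If $\Re(e^z)<0$, then $1/Z_k\le 1/(c(k+1))$ gives $\vert C_n(z)\vert\ge \exp(\Re(e^z)\log(n)/c - O(1))$.
\end{itemize}
Iterating the conditional expectation bound gives $\E\,\cL_n(y)\le \cL_{n_0}(y)\exp(e^y\log(n)/c+O(1))$, where $\cL_{n_0}(y)$ is deterministically bounded on $K$. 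Combining with the lower bound on $\vert C_n(z)\vert$, the exponent is $e^y\log(n)/c - p\,\Re(e^z)\log(n)\cdot(\text{$c$ or $1/c$})$.

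\textbf{Main obstacle.} The stated bound has $1/c$ on both terms, which matches the case $\Re(e^z)<0$. When $\Re(e^z)\ge0$, the term $-pc\,\Re(e^z)\log n$ is larger than $-p\,\Re(e^z)\log(n)/c$, so a naive deterministic bound is weaker than claimed. The remedy I will try is to keep $C_n$ coupled to the tree rather than replacing it by a deterministic bound. Set
\[
X_n=\cL_n(y)\prod_{k=n_0}^{n-1}\frac{1}{(1+e^y/Z_k)}\cdot\frac{\prod_{k=n_0}^{n-1}(1+e^y/Z_k)}{\vert C_n(z)\vert^p},
\]
and note that the first factor is a nonnegative martingale. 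The ratio $\prod(1+e^y/Z_k)/\vert C_n(z)\vert^p$ equals $\exp((e^y-p\Re(e^z))\mathrm h_n+O(1))$ by Lemma \ref{lemme equivalent C n}. When $e^y-p\Re(e^z)\ge0$, this is at most $\exp((e^y-p\Re e^z)\log(n)/c+O(1))$ by \eqref{eq encadrement h n}, which gives exactly the stated bound, since the martingale has constant expectation. When the coefficient is negative, the bound with $c\log n$ is even better, and the claim as stated is still implied whenever the right-hand side is read as an upper bound in the relevant regime. Checking this sign bookkeeping carefully is the step I expect to require the most care.
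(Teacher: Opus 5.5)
Your proof follows the paper's. It is correct when $\alpha\coloneqq e^{y}-p\Re(e^z)\ge 0$, where $y=p\Re(z)+(p-1)\vp$. When $\alpha<0$, your patch has the inequality the wrong way round; in that regime the stated bound is actually false, and the paper's proof has the same gap.

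\textbf{First inequality.} Your argument is correct and is essentially the paper's. The paper applies Jensen to the average over the $n+1$ vertices and then uses $e^{-(p-1)S_k}\le c_\vp e^{(p-1)\vp k}$. Your H\"older splitting puts the same use of \eqref{condition ergodicite} into the bound $\sum_v e^{-\vp d(v)-S_{d(v)}}\le e^{A_\vp}(n+1)$.

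\textbf{Second inequality.} Your ``remedy'' is exactly the paper's proof. You write the quotient as $\frac{C_n(y)}{\vert C_n(z)\vert^p}M_n(y)$ and replace the random prefactor by $\exp(\alpha\mathrm{h}_n+O(1))$ via Lemma \ref{lemme equivalent C n}. You then bound $\mathrm{h}_n$ using \eqref{eq encadrement h n} and use $\E M_n(y)=\E\cL_{n_0}(y)$. Your preliminary deterministic-decoupling attempt is not needed and can be dropped.

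\textbf{The case $\alpha<0$.} You were right to worry about the sign, but your conclusion that the bound is ``even better'' is backwards. When $\alpha<0$, the only usable estimate is $\mathrm{h}_n\ge c\sum_{k=n_0}^{n-1}\frac{1}{k+1}$, which gives $\exp(\alpha c\log n+O(1))$. Since $c\le 1\le 1/c$, we have $\alpha c\ge \alpha/c$, so this is weaker than the claim.

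No argument can close this gap. Take $f\equiv 1$ with the admissible constant $c=1/2$. Then $Z_k=k+1$, so $C_n$ and $\mathrm{h}_n=\log n+O(1)$ are deterministic, and
\[
\E\left(\frac{\cL_n(y)}{\vert C_n(z)\vert^p}\right)=\frac{C_n(y)}{\vert C_n(z)\vert^p}\,\E\left(\cL_{n_0}(y)\right)=\exp\left(\alpha\log n+O(1)\right).
\]
This is not bounded by $\exp(2\alpha\log n+O(1))$ when $\alpha<0$, for instance with $z=0$, $p=3/2$, $\vp=1/10$. For non-constant $f$, where $c<1$ is forced, the same failure follows from Fatou's lemma together with $\mathrm{h}_n\sim\log n$ a.s.

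So the second inequality as stated holds only when $\alpha\ge 0$ (or $c=1$). The paper's proof makes the same unjustified step silently when it invokes \eqref{eq encadrement h n}. What your argument, and the paper's, actually proves is
\[
\E\left(\frac{\cL_n(y)}{\vert C_n(z)\vert^p}\right)\le \exp\left(\max(\alpha/c,\alpha c)\log n+O(1)\right).
\]
This weaker bound suffices for everything downstream. Proposition \ref{prop cv unif martingale} only needs the exponent to be negative on $\mathcal{E}_\beta$, and $\alpha c$ is negative there whenever $\alpha$ is.
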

\begin{proof}
	Recall that $M_n(z)=\cL_n(z)/C_n(z)$. By Jensen's inequality, we see that
	\[
	\left\vert \frac{\cL_n(z)}{n+1} \right\vert^p = \left\vert \frac{1}{n+1} \sum_{v \in \T_n} e^{-S_{d(v)}} e^{zd(v)} \right\vert^p\le  \frac{1}{n+1} \sum_{v \in \T_n} e^{-pS_{d(v)}} e^{p \Re(z) d(v)}.
	\]
	Moreover, by \eqref{condition ergodicite}, there exists $c_\vp>0$ such that for all $n\ge n_0$ and $z \in \mathcal{E}$, we have
	\begin{align*}
	\sum_{v \in \T_n} e^{- p S_{d(v)} + p \Re(z) d(v)} &\le  c_\vp \sum_{v \in \T_n} e^{-S_{d(v)}+(p-1)\vp d(v) + p\Re(z)d(v) } =  c_\vp \sum_{v \in \T_n} e^{-S_{d(v)} + ((p-1) \vp + p\Re(z)) d(v)}.
	\end{align*}
	This proves the first part of the lemma. Let us show the second part. One can write for $z \in K$,
	\begin{align*}
		\frac{\cL_n(p\Re(z) + (p-1)\vp)}{\vert C_n(z)\vert^p} &= \frac{C_n(p\Re(z) + (p-1) \vp)}{\vert C_n(z)\vert^p} M_n(p\Re(z) + (p-1) \vp)\\
		&= \exp\left( \left(e^{p\Re(z) + (p-1)\vp} -p\Re(e^z) \right) \mathrm{h}_n + O(1)\right) M_n(p\Re(z) + (p-1) \vp),
	\end{align*}
	where we applied Lemma \ref{lemme equivalent C n}, where the $O(1)$ is a sequence of random variables which is bounded from above by a constant which only depends on $K, \vp, p$ and $f$. Using \eqref{eq encadrement h n}, we deduce that 
	\[\frac{\cL_n(p\Re(z) + (p-1)\vp)}{\vert C_n(z)\vert^p} \le  \exp\left( \left(e^{p\Re(z) + (p-1)\vp} -p\Re(e^z) \right) \log(n)/c + O(1)\right) M_n(p\Re(z) + (p-1) \vp),\]
	where the $O(1)$ is a sequence of random variables which is bounded from above by a constant which only depends on $K, \vp, p$ and $f$. We conclude by taking the expectation.
\end{proof}
Next, let us control the increments of the martingales.
\begin{proposition}\label{prop controle increments martingale}
	Let $K \subset \mathcal{E}$ be a compact set. For all $p \in (1,2)$, for all $\vp \in (0,2-p)$,
	\[
	\E\left( \left\vert M_{n+1}(z)- M_n(z)\right\vert^p\right)\le \frac{1}{n}
	\exp\left( \left(e^{p \Re(z) +(p-1)\vp}-p\Re(e^z)\right) \log (n)/c + O(1)\right),
	\]
	uniformly in $z \in K$ as $n\to \infty$, where the $O(1)$ is a sequence bounded by a constant which only depends on $K, p, \vp$ and $f$. In particular, 
	\[
	\max_{k \in \lb 0, n \rb} \E\left( \left\vert M_{n+k}(z)- M_n(z)\right\vert^p\right)\le 
	\exp\left( \left(e^{p \Re(z) +(p-1)\vp}-p\Re(e^z)\right) \log (n)/c + O(1)\right),
	\]
	uniformly in $z \in K$ as $n\to \infty$, where the $O(1)$ is a sequence bounded by a constant which only depends on $K, p, \vp$ and $f$. 
\end{proposition}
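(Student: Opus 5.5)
We compute the increment explicitly. Write $D_n(z) \coloneqq \cL_{n+1}(z) - \cL_n(z) = e^{z(d(V_n)+1) - S_{d(V_n)+1}}$, where $V_n$ is the vertex chosen at step $n$. Since $C_{n+1}(z) = C_n(z)(1+e^z/Z_n)$, we have
\[
M_{n+1}(z) - M_n(z) = \frac{1}{C_{n+1}(z)}\left( D_n(z) - \frac{e^z}{Z_n}\cL_n(z)\right).
\]
For $z\in K\subset\mathcal{E}$ and $n\ge n_1$ we have $\vert 1+e^z/Z_n\vert \ge 1/2$, so $\vert C_{n+1}(z)\vert \ge \vert C_n(z)\vert/2$. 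Using $\vert a+b\vert^p\le 2^{p-1}(\vert a\vert^p+\vert b\vert^p)$, it then suffices to bound $\E(\vert D_n(z)\vert^p/\vert C_n(z)\vert^p)$ and $\E(\vert e^z\cL_n(z)/Z_n\vert^p/\vert C_n(z)\vert^p)$ separately, each by the claimed right-hand side.

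\textbf{The jump term.} Since $C_n(z)$ is $\mathcal{F}_n$-measurable, condition on $\mathcal{F}_n$:
\[
\E\left(\vert D_n(z)\vert^p\,\middle\vert\,\mathcal{F}_n\right) = \sum_{v\in\T_n}\frac{f(d(v))}{Z_n} e^{p\Re(z)(d(v)+1) - pS_{d(v)+1}}.
\]
Because $f\le 1/c$, $Z_n\ge c(n+1)$ and $\Re(z)<1$, this is at most $\frac{C}{n}\sum_{v} e^{p\Re(z)d(v) - pS_{d(v)}}$. As in the proof of Lemma \ref{lemme moment Lp martingale}, condition \eqref{condition ergodicite} with $\ell=0$ gives $-pS_k\le -S_k+(p-1)\vp k + \log c_\vp$. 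Hence the conditional expectation is at most $\frac{C_\vp}{n}\cL_n(p\Re(z)+(p-1)\vp)$. Dividing by $\vert C_n(z)\vert^p$, taking expectations and applying the second part of Lemma \ref{lemme moment Lp martingale} gives exactly the first bound with its factor $1/n$.

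\textbf{The compensator term.} Here $\vert e^z/Z_n\vert^p\le C n^{-p}$. By the first part of Lemma \ref{lemme moment Lp martingale}, $\vert \cL_n(z)\vert^p/\vert C_n(z)\vert^p = \vert M_n(z)\vert^p \le C_\vp n^{p-1}\cL_n(p\Re(z)+(p-1)\vp)/\vert C_n(z)\vert^p$. Multiplying these, the powers of $n$ combine to $n^{-1}$. Taking expectations and using Lemma \ref{lemme moment Lp martingale} again gives the same bound. Since $n_1$ is a fixed constant, the finitely many indices below it can be absorbed into the $O(1)$, although in fact the bound $\vert 1+e^z/Z_n\vert\ge$ const already holds for $n\ge n_0$ and $z\in\mathcal{E}$.

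\textbf{The second statement.} This is where the argument needs care, since $p<2$ rules out orthogonality of increments. Instead we use the Burkholder (or von Bahr--Esseen) inequality for martingales with $p\in(1,2]$:
\[
\E\vert M_{n+k}(z)-M_n(z)\vert^p\le 2\sum_{j=n}^{n+k-1}\E\vert M_{j+1}(z)-M_j(z)\vert^p.
\]
For $j\in[n,2n]$, each term is at most $\frac{1}{j}\exp\big((e^{p\Re(z)+(p-1)\vp}-p\Re(e^z))\log(j)/c+O(1)\big)$. Since $\log j = \log n + O(1)$ on this range, the exponent differs from its value at $n$ by an $O(1)$ that is uniform over $K$, because the coefficient is bounded on $K$. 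Summing the factors $1/j$ over $j\in[n,2n]$ gives a bounded quantity. The maximum over $k\le n$ is therefore bounded as claimed, uniformly in $z\in K$.
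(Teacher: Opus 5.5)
Your proposal is correct and follows essentially the same route as the paper. Like the paper, you split the increment into the jump term and the compensator term $-\frac{e^z}{Z_n}\cL_n(z)/C_{n+1}(z)$, which is exactly the paper's $\bigl(C_n(z)/C_{n+1}(z)-1\bigr)M_n(z)$, bound both with the two parts of Lemma \ref{lemme moment Lp martingale}, and obtain the second statement from a von Bahr--Esseen type inequality (the paper cites Lemma 1 of Biggins, with constant $2^p$) summed over $j\in[n,2n]$.
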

\begin{proof}
	We write
	\[
	M_{n+1}(z) -
	M_n(z) = \frac{\cL_{n+1}(z)}{C_{n+1}(z)} - \frac{\cL_n(z)}{C_n(z)} = \left( \frac{C_n(z)}{C_{n+1}(z)} -1\right)M_n(z) + \frac{1}{C_{n+1}(z)} e^{zd(v_{n+1}) - S_{d(v_{n+1})}}.
	\]
	Then,
	\begin{align}
		\E\left( \left. \left\vert M_{n+1}(z)-M_n(z) \right\vert^p  \right\vert \mathcal{F}_n\right)\le &2^p\left\vert \frac{C_n(z)}{C_{n+1}(z)} -1\right\vert^p \vert M_n(z)\vert^p \label{premiere ligne increment martingale}\\
		&+\frac{2^p}{\vert C_{n+1}(z)\vert^p} \E\left(\left.   e^{p \Re(z) d(v_{n+1})-pS_{d(v_{n+1})}} \right\vert \mathcal{F}_n \right).\label{deuxieme ligne increment martingale}
	\end{align}
	For $z \in K$, the first line \eqref{premiere ligne increment martingale} can be rewritten
	\[
	2^p \frac{e^{p\Re(z)}}{Z_n^p} \frac{\vert \cL_n(z) \vert^p}{\vert C_{n+1}(z) \vert^p}  \le c(K,p) \frac{1}{n^p} \vert M_n(z)\vert^p,
	\]
	where $c(K,p)$ is a (deterministic) constant which only depends on $K,p$ and $f$. Moreover, for all $z \in K$, the second line \eqref{deuxieme ligne increment martingale} can be written
	\begin{align*}
		\frac{2^p}{\vert C_{n+1}(z)\vert^p} \sum_{v \in \T_n} \frac{f(d(v))}{Z_n} e^{p\Re(z)(d(v)+1)-p S_{d(v)+1}}&=\frac{2^p e^{p\Re(z)}}{Z_n\vert C_{n+1}(z)\vert^p} \sum_{v \in \T_n} e^{-S_{d(v)}} e^{p\Re(z)d(v) -(p-1) S_{d(v)+1}}\\
		&\le \frac{c(\vp, K, p)}{n\vert C_{n}(z)\vert^p} \cL_n(p\Re(z) + (p-1)\vp),
	\end{align*}
	for some constant $c(\vp, K, p)$ also depending on $f$.
	
	Thus, for all $n \ge n_0$, for all $z \in K$,
	\begin{equation}\label{eq majoration moment conditionnel increment}
	\E\left( \left. \left\vert M_{n+1}(z)-M_n(z) \right\vert^p  \right\vert \mathcal{F}_n\right)\le c(K,p) \frac{1}{n^p} \vert M_n(z) \vert^p + \frac{c(\vp, K, p)}{n} \frac{\cL_n(p\Re(z)+(p-1)\vp)}{\vert C_{n}(z)\vert^p}.
	\end{equation}
	By taking the expectation, we deduce that for all $n \ge n_0$, for all $z \in K$,
	\[
	\E\left( \vert M_{n+1}(z) -M_n(z)\vert^p \right)\le c(K,p)\frac{1}{n^p}\E\left( \vert M_n(z)\vert^p \right) + c(\vp, K,p)\frac{1}{n} \E\left(\frac{\cL_n(p\Re(z)+(p-1)\vp)}{\vert C_{n}(z)\vert^p}\right).
	\]
	By taking Lemma \ref{lemme moment Lp martingale} into account, we deduce that
	\[
	\E\left(\vert M_{n+1}(z)-M_n(z)\vert^p\right) \le\frac{1}{n} \exp\left(\left(e^{p\Re(z)+(p-1)\vp} -p\Re(e^z)\right) \log(n)/c +O(1)\right),
	\]
	uniformly in $z \in K$ as $n\to \infty$, where the $O(1)$ is a sequence bounded from above by a constant which only depends on $K,p,\vp$ and $f$. This is the first point of the proposition. The second point is obtained by taking the sum and using the inequality
	\begin{equation}\label{eq lemme Biggins}
	\forall z \in \mathcal{E}', n\ge n_0,  j \in \N, \qquad\E\left(\left.\left\vert M_{n+j}(z)- M_n(z) \right\vert^p \right\vert \mathcal{F}_n \right)\le 2^p \sum_{k=0}^{j-1}  \E\left(\left. \left\vert M_{n+k+1}(z)- M_{n+k}(z) \right\vert^p \right \vert \mathcal{F}_n\right),
	\end{equation}
	which is obtained by applying Lemma 1 of \cite{Big92} (see also Lemma 36 of \cite{Sen21}) to the martingale $(M_{n+k}(z)-M_n(z))_{k\ge 0}$ under the conditional probability measure $\P(\cdot\vert \mathcal{F}_n)$.
\end{proof}
We then record a useful lemma from \cite{Sen21}.
\begin{lemma}[Lemma 37 of \cite{Sen21}]\label{lemme 37}
	Suppose that $(z \mapsto \mathcal{M}_n(z))_{n\ge 1}$ is a sequence of analytic functions on some open set $\mathcal{O} \subset \C$, adapted to a filtration $(\mathcal{G}_n)_{n\ge 1}$. Suppose that for every $z \in \mathcal{O}$, the sequence $(\mathcal{M}_n(z))_{n\ge 1}$ is a martingale with respect to $(\mathcal{G}_n)_{n\ge 1}$. If there exists $q>1$ and continuous functions $\alpha:\mathcal{O} \to \R$ and $\delta: \mathcal{O} \to (0, \infty)$ such that for every compact subset $K \subset \mathcal{O}$, as $n\to \infty$,
	\[
	\sup_{z \in K}\E\left( \vert \mathcal{M}_{2n}(z)-\mathcal{M}_n(z) \vert^q \right) = O \left( n^{\alpha(z) q- \delta(z) }\right),
	\]
	then, for all compact subsets $K \subset \mathcal{O}$, there exists $\vp>0$ such that
	\begin{enumerate}[(i)]
		\item \label{item i lemme 37} If $\alpha>0$ on $\mathcal{O}$, then 
		\[
		\sup_{z \in K} n^{-\alpha(z)} \vert \mathcal{M}_n(z) - \mathcal{M}_1(z) \vert = O(n^{-\vp}) \text{ a.s.}\quad \text{and} \qquad  \sup_{z \in K} n^{-\alpha(z)}  \E\left(\vert \mathcal{M}_n(z) - \mathcal{M}_1(z) \vert  \right) = O(n^{-\vp});
		\]
		\item \label{item ii lemma 37} If $\alpha\le 0$ on $\mathcal{O}$, then $\mathcal{M}_n(z)$ converges a.s.\@ to a limit $\mathcal{M}_\infty(z)$ for all $z \in \mathcal{O}$ and we have 
		\[
		\sup_{z \in K} n^{-\alpha(z)} \vert \mathcal{M}_n(z) - \mathcal{M}_\infty(z) \vert = O(n^{-\vp}) \text{ a.s.}\quad \text{and} \qquad  \sup_{z \in K} n^{-\alpha(z)}  \E\left(\vert \mathcal{M}_n(z) - \mathcal{M}_\infty(z) \vert  \right) = O(n^{-\vp}).
		\]
	\end{enumerate}
\end{lemma}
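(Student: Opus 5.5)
The proof goes in three steps: reduce to small discs on which $\alpha$ and $\delta$ are almost constant, control the supremum over a dyadic time block and over a disc simultaneously, then telescope along dyadic times $n=2^j$ and conclude with Borel--Cantelli.

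\emph{Localisation.} Fix a compact $K\subset\mathcal{O}$. Set $\delta_*:=\min_K\delta>0$ and choose $\eta\in(0,\delta_*/(8q))$. By continuity of $\alpha,\delta$ and compactness, cover $K$ by finitely many discs $D(z_i,r_i)$ with $\overline{D}(z_i,3r_i)\subset\mathcal{O}$. On each $D(z_i,2r_i)$ we require $|\alpha-\alpha(z_i)|\le\eta$ and $\delta\ge\delta_*/2$. It then suffices to prove the statements for a single disc $D=D(z_0,r)$, with $\alpha_0:=\alpha(z_0)$.

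\emph{Supremum over $z$ and over a time block.} Since $|g|^q$ is subharmonic for analytic $g$, for $z\in D(z_0,r)$ we have
\[
|g(z)|^q\le \frac{1}{\pi r^2}\int_{D(z_0,2r)}|g(w)|^q\,dA(w).
\]
Apply this to $g=\mathcal{M}_m-\mathcal{M}_n$ and take the supremum over $m\in[n,2n]$ inside the integral. Doob's $L^q$ maximal inequality gives $\E\sup_{n\le m\le 2n}|\mathcal{M}_m(w)-\mathcal{M}_n(w)|^q\le C_q\E|\mathcal{M}_{2n}(w)-\mathcal{M}_n(w)|^q$. Combining with Fubini and the hypothesis (uniform on the compact $\overline{D}(z_0,2r)$),
\[
\E\Big(\sup_{z\in D}\sup_{n\le m\le 2n}|\mathcal{M}_m(z)-\mathcal{M}_n(z)|^q\Big)=O\big(n^{(\alpha_0+\eta)q-\delta_*/2}\big).
\]
Take $n=2^j$ and apply Markov's inequality at level $2^{j(\alpha_0+2\eta-\delta_*/(4q))}$. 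The resulting probabilities are $O(2^{-j(\eta q+\delta_*/4)})$, which are summable. Borel--Cantelli then gives, almost surely for $j$ large,
\[
\Delta_j:=\sup_{z\in D}\sup_{2^j\le m\le 2^{j+1}}|\mathcal{M}_m(z)-\mathcal{M}_{2^j}(z)|\le 2^{j(\alpha_0-\kappa)},
\qquad \kappa:=\delta_*/(4q)-2\eta>0 .
\]
Moreover $\E\Delta_j\le(\E\Delta_j^q)^{1/q}=O(2^{j(\alpha_0+\eta-\delta_*/(2q))})$.

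\emph{Telescoping.} For $2^J\le n<2^{J+1}$, write $\mathcal{M}_n-\mathcal{M}_1$ as a sum of at most $J+1$ dyadic increments, each bounded by some $\Delta_j$.
\begin{itemize}
\item In case (i), $\alpha_0\ge\alpha_0-\eta>0$ after shrinking $\eta$ if needed, since $\alpha>0$ on the compact $K$. The geometric sum $\sum_{j\le J}2^{j(\alpha_0-\kappa)}$ is then $O(n^{\alpha_0-\kappa})$, up to the finitely many random early terms, which are $O(1)$. Since $\alpha(z)\ge\alpha_0-\eta$ on $D$, we get $n^{-\alpha(z)}|\mathcal{M}_n(z)-\mathcal{M}_1(z)|=O(n^{\eta-\kappa})$. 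This is $O(n^{-\vp})$ because $\kappa>\eta$ by the choice of $\eta$. The $L^1$ bound follows identically from the bound on $\E\Delta_j$.
\item In case (ii), $\alpha_0-\kappa<0$, so $\sum_j\Delta_j<\infty$ almost surely. Hence $\mathcal{M}_n$ converges uniformly on $D$ to a limit $\mathcal{M}_\infty$, which is analytic as a uniform limit of analytic functions. The tail satisfies $\sum_{j\ge J}\Delta_j=O(n^{\alpha_0-\kappa})$, which after multiplying by $n^{-\alpha(z)}$ gives the rate $n^{\eta-\kappa}$ as before; expectations are handled the same way.
\end{itemize}

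The main obstacle is uniformity in $z$ when $\alpha$ is not constant. This is handled by working on discs where $\alpha$ varies by at most $\eta$ and by choosing $\eta$ small compared with $\delta_*/q$. One also has to pass from bounds at fixed $z$ to a supremum over $z$; the subharmonic mean-value inequality combined with Doob's inequality does exactly this, losing only a constant factor.
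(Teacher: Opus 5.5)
Your proof is correct and follows the standard argument. The paper gives no proof of its own and simply quotes Lemma 37 of \cite{Sen21}, which goes back to \cite{Big92} and has the same structure as yours: bound the supremum of an analytic function over a small disc by an average over a larger circle or disc (Cauchy's formula there, your sub-mean-value inequality here), use Doob's maximal inequality on the block $[n,2n]$, apply Markov and Borel--Cantelli along $n=2^j$, and telescope.

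Two small slips need fixing.
\begin{enumerate}
\item $\kappa=\delta_*/(4q)-2\eta>\eta$ requires $\eta<\delta_*/(12q)$, not $\eta<\delta_*/(8q)$. For example, $\eta=\delta_*/(9q)$ gives $\kappa=\delta_*/(36q)<\eta$.
\item In case (i), $\sum_{j\le J}2^{j(\alpha_0-\kappa)}=O(n^{\alpha_0-\kappa})$ only when $\alpha_0>\kappa$. Otherwise the sum is $O(\log n)$, which still suffices because $\alpha\ge\alpha_0-\eta>0$ on the disc.
\end{enumerate}
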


Now equipped with the above, we can obtain the convergence of the martingales on some suitable domains. Let us write $\mathcal{E}_\beta \coloneqq\{ z\in \C; \  \Re(z)<1-2\beta \text{ and } \vert \mathrm{Im}(z)\vert< \beta/2\}\subset \mathcal{E}$ for all $\beta>0$.
\begin{proposition}\label{prop cv unif martingale}
	There exists $\beta_0>0$ such that the following holds for all $\beta\in (0, \beta_0)$. The sequence of functions $(z\mapsto M_n(z))_{n\ge n_0}$ converges uniformly almost surely and in $\mathrm{L}^1$ towards a random analytic function $z \mapsto M_\infty(z)$ on every compact subset of $\mathcal{E}_\beta$. Moreover, for any compact subset $K \subset \mathcal{E}_\beta$, there exists $\vp>0$ such that almost surely,
	\[
	\limsup_{n\to \infty} \max_{z \in K} n^\vp \vert M_n(z)-M_\infty(z)\vert <\infty.
	\]
	Furthermore, for any compact subset $K \subset \mathcal{E}_\beta$, for all $p>1$ small enough,
	\begin{equation}\label{eq M n borne dans L p}
	\sup_{n\ge n_0} \sup_{z \in K} \E\left( \vert M_n (z) \vert^p \right)<\infty.
	\end{equation}
\end{proposition}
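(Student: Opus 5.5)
The plan is to feed the moment bound of Proposition \ref{prop controle increments martingale} into Lemma \ref{lemme 37}(ii). On the open set $\mathcal{O}=\mathcal{E}_\beta$, the required input is
\[
\sup_{z\in K}\E\left(\vert M_{2n}(z)-M_n(z)\vert^q\right)=O\left(n^{-\delta(z)}\right),
\]
with $\alpha\equiv 0$ and some continuous $\delta>0$. By the second point of Proposition \ref{prop controle increments martingale} with $k=n$, the left-hand side is bounded by
\[
\exp\left(\left(e^{p\Re(z)+(p-1)\vp}-p\Re(e^z)\right)\log(n)/c+O(1)\right).
\]
It therefore suffices to choose $p\in(1,2)$ and $\vp\in(0,2-p)$ so that the exponent function
\[
g(z)\coloneqq e^{p\Re(z)+(p-1)\vp}-p\Re(e^z)
\]
is strictly negative, uniformly on compacts of $\mathcal{E}_\beta$. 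We then set $\delta(z)=-g(z)/c$ (up to a small margin absorbing the $O(1)$). A complication is that $p$ must be fixed in Lemma \ref{lemme 37} while the admissible $p$ could depend on $z$. I would handle this by working compact by compact: for each compact $K$ one fixed $p$ works, and the lemma is applied on a neighbourhood of $K$.

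The key computation is the sign of $g$. Write $z=x+iy$. Then $\Re(e^z)=e^x\cos y$, and
\[
g(z)=e^{px+(p-1)\vp}-pe^x\cos y.
\]
For real $z$ and $p=1+\eta$ with $\eta$ small, expanding to first order in $\eta$ gives
\[
g(x)\approx \eta\, e^x\left(x+\vp-1\right)+O(\eta^2).
\]
This is negative as soon as $x<1-\vp$. For $x<1-2\beta$, the choice $\vp=\beta/2$ leaves a margin $\approx \eta e^x\beta/2$. The imaginary part costs
\[
pe^x(1-\cos y)\le pe^x y^2/2\le e^x\beta^2/4 \quad\text{on } \vert y\vert<\beta/2.
\]
This cost has to be beaten by the margin, which is of order $\eta$. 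So $\eta$ must be taken comparable to a constant multiple of $\beta$, and then the second-order terms $O(\eta^2x^2e^x)$ must be controlled too. For $x$ very negative, every term carries a factor $e^x$, and $e^{px}\ll e^x$ helps. On a compact set $x$ is bounded below anyway, so uniformity is only needed on compacts.

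This balancing is the main obstacle. The quantitative choice of $p-1\sim\beta$ against the $y^2$ loss and the $\eta^2$ errors is exactly what forces $\beta<\beta_0$. I would first check whether the shape of $\mathcal{E}_\beta$ (real part below $1-2\beta$, imaginary part below $\beta/2$) makes $\eta=\beta$ work directly. If it does not, I would shrink $\beta_0$ until it does.

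Once the hypotheses hold, Lemma \ref{lemme 37}(ii) gives the following:
\begin{itemize}
\item almost sure convergence and $\mathrm{L}^1$ convergence, uniformly on compacts, with the rate $\sup_K n^{\vp}\vert M_n-M_\infty\vert<\infty$;
\item analyticity of $M_\infty$, since it is a locally uniform limit of analytic functions.
\end{itemize}
For \eqref{eq M n borne dans L p}, I would combine $\vert M_{n_0}(z)\vert^p$, which is deterministically bounded because $\T_{n_0}$ has boundedly many vertices, with a dyadic decomposition
\[
M_{2^{j+1}}-M_{2^j}.
\]
By Minkowski's inequality in $\mathrm{L}^p$, the pieces have norms $O(2^{-j\delta/p})$, which are summable. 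Intermediate $n$ are handled by Proposition \ref{prop controle increments martingale} with $k\le n$. The bound holds for all $p>1$ small enough, namely any $p$ for which $g<0$ on $K$.
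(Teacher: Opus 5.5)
Your proposal is correct and is essentially the paper's proof. You feed Proposition \ref{prop controle increments martingale} with $k=n$ into Lemma \ref{lemme 37}(ii) with $q=p=1+O(\beta)$, then use a dyadic argument for \eqref{eq M n borne dans L p}; your Minkowski summation (instead of \eqref{eq lemme Biggins}) and your choice $\alpha\equiv 0$ (instead of the paper's $\alpha=g/(2cp)<0$) are harmless variations.

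The sign check you leave open closes in one line, as in the paper, using $e^{(p-1)\Re(z)}\le e^{(p-1)(1-2\beta)}$ and $\cos(\mathrm{Im}(z))\ge 1-\beta^2/8$. This lets a single $p$ serve all of $\mathcal{E}_\beta$, with no separate care for very negative $\Re(z)$. The paper's choice $p=1+\beta/2$, $\vp=\beta$ gives $g(z)\le e^{\Re(z)}\left(-\beta^2/4+O(\beta^3)\right)$. Your choice $p=1+\beta$, $\vp=\beta/2$ gives $g(z)\le e^{\Re(z)}\left(-7\beta^2/8+O(\beta^3)\right)$; note that your first-order margin is really $\tfrac32\eta\beta$, not $\tfrac12\eta\beta$. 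So shrinking $\beta_0$ is only needed to absorb the cubic terms.
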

\begin{proof}
	Recall that by construction, for all $n\ge n_0$ the function $z\mapsto M_n(z)$ is analytic on $\mathcal{E}$ and that it is measurable with respect to $\mathcal{F}_n$ (where we recall that $(\mathcal{F}_n)_{n\ge 0}$ is the natural filtration associated with $(\T_n)_{n\ge 0}$). Furthermore, recall that for all $z \in \mathcal{E}$, the sequence $(M_n(z))_{n\ge n_0}$ is a martingale with respect to $(\mathcal{F}_n)_{n\ge n_0}$.
	
	Recall from Proposition \ref{prop controle increments martingale} that for all compact $K\subset \mathcal{E}$, for all $p \in (1,2)$, for all $\beta \in (0,2-p)$,
	\[
	\E\left( \left\vert M_{2n}(z)- M_n(z)\right\vert^p\right)\le 
	\exp\left( \left(e^{p \Re(z) +(p-1)\beta}-p\Re(e^z)\right) \log (n)/c + O(1)\right),
	\]
	uniformly in $z \in K$ as $n\to \infty$, where the $O(1)$ is a sequence bounded by a constant which only depends on $K, p, \beta$ and $f$.

	Let $\beta \in (0,1/2)$. Take $p=1+\beta/2$. Then, for all $z \in \mathcal{E}_\beta$, 
	\begin{align*}
		e^{p\Re(z)+(p-1)\beta} - p \Re(e^z) &=
		e^{(1+\beta/2) \Re(z) + \beta^2/2}- \left(1+\frac{\beta}{2}\right) e^{\Re(z)} \cos(\mathrm{Im}(z))\\
		&\le e^{\Re(z)}\left( e^{(\beta/2)(1-2\beta) + \beta^2/2} - \left(1 + \frac{\beta}{2}\right)\left(1-\frac{\beta^2}{8}\right) \right).
	\end{align*}
	But, as $\beta \downarrow 0$, we have
	\[
	e^{(\beta/2)(1-2\beta) + \beta^2/2} - \left(1 + \frac{\beta}{2}\right)\left(1-\frac{\beta^2}{8}\right) = -\beta^2 +\frac{\beta^2}{2} + \frac{\beta^2}{8} + \frac{\beta^2}{8} + o(\beta^2),
	\]
	which is strictly negative for $\beta<\beta_0$ for some $\beta_0\in (0,1/2)$.
	
	One can then apply Lemma \ref{lemme 37} \eqref{item ii lemma 37} to the sequence of functions $(z\mapsto M_n(z))_{n\ge n_0}$ with $q=p=1+\beta/2$, with $\alpha(z)=(e^{p\Re(z) +(p-1)\beta}-p\Re(e^{z}))/(2cp)<0$ and $\delta(z)=-(e^{p\Re(z) +(p-1)\beta}-p\Re(e^{z}))/(2c) >0$ for all $z \in \mathcal{E}_\beta$, with $\beta \in (0, \beta_0)$. This proves the statement of the proposition, except \eqref{eq M n borne dans L p}.
	
	Let us show \eqref{eq M n borne dans L p}. By our choice of $\beta_0 $ we have for all $\beta \in (0, \beta_0)$, for all $K\subset \mathcal{E}_\beta$, letting $p=1+\beta /2$, there exists $\delta>0$ such that for all $z \in K$, 
	\[
	\E\left( \vert M_{2n}(z)- M_n(z) \vert^p\right)\le n^{-\delta },
	\]
	so that using \eqref{eq lemme Biggins},
	\[
	\sup_{n\ge \log(n_0)/\log(2)} \sup_{z \in K} \E\left( \vert M_{2^n}(z) \vert^p\right)<\infty.
	\]
	But actually, by the second point of Proposition \ref{prop controle increments martingale}, we see that for all $z \in K$, for all $n\ge n_0$, for all $k \in \lb 0 , n \rb$,
	\[
	\E\left( \vert M_{n+k}(z)- M_n(z) \vert^p\right)\le n^{-\delta },
	\]
	so that for all $k \in \lb 0, 2^n\rb$,
	\[
	\E\left( \vert M_{2^n+k}(z)- M_{2^n}(z) \vert^p\right)\le 2^{-\delta n}.
	\]
	Thus,
	\[
	\sup_{n\ge n_0}\sup_{z \in K} \E\left(\vert M_n(z)\vert^p \right)<\infty.
	\]
	This concludes the proof.
\end{proof}
\subsection{Zeros of the limit}\label{sous-section zeros de la limite}
We study the limits of the martingales $(M_n(z))_{n\ge n_0}$ for $z \in (-\infty, 1)$. Since in our case the weights depend on the depth, the self-similarity argument used in \cite{Sen21, KKS25} cannot be applied. Instead, we will control $\log M_n(0)$. We will only prove that almost surely, $z \mapsto M_\infty(z)$ has at most countably many zeros which are isolated. We start by proving a few lemmas. 

The first lemma gives rough bounds for the depth of the tree.
\begin{lemma}\label{lemme controle hauteur totale}
	For all $\vp \in (0,1)$, the sequence of random variables $e^{(1-\vp)d(\T_n)}/n^{e/c}$ is bounded in $\mathrm{L}^1$. In particular, almost surely, for all $n$ large enough, $d(\T_n)\le (3+2e/c)\log n$.
\end{lemma}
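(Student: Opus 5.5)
The plan is to dominate $e^{(1-\vp)d(\T_n)}$ by the weighted Laplace transform $\cL_n$ at a real point slightly below $1$, and to bound $\E(\cL_n(x))$ for real $x\le 1$ directly through Lemma \ref{lemme martingale}, using only the deterministic lower bound $Z_n\ge c(n+1)$. This avoids the random normalisation $C_n$ altogether.

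\textbf{Domination by $\cL_n$.} Fix $\vp\in(0,1)$ and set $x\coloneqq 1-\vp/2$. By \eqref{condition ergodicite} with $\ell=0$, there is a deterministic constant $A_\vp$ such that $S_k\le A_\vp+\vp k/2$ for all $k\in\N$. Hence for every $k\in \N$ we have $e^{(1-\vp)k}\le e^{A_\vp}e^{xk-S_k}$. Keeping only one vertex of maximal depth in the sum defining $\cL_n$, we get
\[
e^{(1-\vp)d(\T_n)}\le \sum_{v\in\T_n} e^{(1-\vp)d(v)} \le e^{A_\vp}\,\cL_n(x).
\]

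\textbf{Bound on the expectation.} For real $x$, $\cL_n(x)>0$. Since $Z_n\ge c(n+1)$ and $e^x\le e$, Lemma \ref{lemme martingale} gives
\[
\E(\cL_{n+1}(x)\vert\mathcal{F}_n)\le \Bigl(1+\frac{e}{c(n+1)}\Bigr)\cL_n(x).
\]
Iterating from $\cL_0(x)=1$ yields
\[
\E(\cL_n(x))\le\prod_{k=0}^{n-1}\Bigl(1+\frac{e}{c(k+1)}\Bigr)\le \exp\Bigl(\frac{e}{c}\sum_{k=1}^{n}\frac1k\Bigr)\le C\,n^{e/c}.
\]
Combining with the domination step, $\E\bigl(e^{(1-\vp)d(\T_n)}\bigr)\le C'n^{e/c}$, which is exactly the $\mathrm{L}^1$ boundedness claimed in Lemma \ref{lemme controle hauteur totale}.

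\textbf{Almost sure bound.} Set $a\coloneqq 3+2e/c$. By Markov's inequality,
\[
\P\bigl(d(\T_n)>a\log n\bigr)\le C' n^{e/c-(1-\vp)a}.
\]
Choosing $\vp$ small enough, say $\vp=1/2$, gives $(1-\vp)a=3/2+e/c$. The exponent is then $e/c-(1-\vp)a=-3/2$, so these probabilities are summable in $n$. Borel--Cantelli then gives $d(\T_n)\le a\log n$ for all $n$ large enough, almost surely.

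\textbf{Main obstacle.} The only delicate point is the first step, namely absorbing the factor $e^{-S_k}$ in the definition of $\cL_n$ into a small exponential loss $e^{\vp k/2}$ via \eqref{condition ergodicite}. This is where the assumption is used, and it is why the statement requires $\vp>0$. Everything else is routine.
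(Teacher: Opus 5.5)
Your proof is correct and follows essentially the same route as the paper: you dominate $e^{(1-\vp)d(\T_n)}$ by $\cL_n$ at a real point just below $1$ using \eqref{condition ergodicite}, bound $\E(\cL_n)$ by a constant times $n^{e/c}$ using $Z_n\ge c(n+1)$, and conclude with Markov's inequality at exponent $1/2$ and Borel--Cantelli. The only cosmetic difference is that you iterate Lemma~\ref{lemme martingale} directly with the deterministic bound on $Z_k$, whereas the paper writes $\cL_n=M_nC_n$ and bounds $C_n$ deterministically via Lemma~\ref{lemme equivalent C n} and \eqref{eq encadrement h n}; this amounts to the same estimate and makes your version slightly more self-contained.
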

\begin{proof}
	Let $z \in (0,1)$. By definition of $\cL_n(z)$, we have
	\[
	e^{zd(\T_n)- S_{d(\T_n)}} \le \cL_n(z) =M_n(z)C_n(z)\le M_n(z) \exp(e^z \mathrm{h}_n+O(1)) \le M_n(z) \exp((e^z/c)\log(n) + O(1)),
	\]
	as $n\to \infty$, after applying Lemma \ref{lemme equivalent C n} and \eqref{eq encadrement h n}, where the $O(1)$ is deterministic.
	
	Moreover, by \eqref{condition ergodicite}, for all $\vp \in (0,1)$, there exists $c_\vp>0$ such that for all $n\ge1$, we have $e^{-S_{d(\T_n)}} \ge c_\vp e^{-\vp d(\T_n)}$. By taking the expectation, we conclude that as $n\to \infty$,
	\[
	c_\vp \E\left( e^{(z-\vp) d(\T_n)}\right) \le \E\left(M_{n_0}(z)\right) n^{e^z/c +O(1)} = o(n^{e/c}).
	\]
	This proves the first part of the lemma. Using that $\P(d(\T_n) \ge (3+2e/c)\log n) = \P(e^{d(\T_n)/2} \ge n^{3/2+e/c})$, Markov's inequality and then the Borel-Cantelli lemma, we deduce the second part of the lemma.
\end{proof}

The above lemma enables to get an almost sure equivalent to $\log (\cL_n(0)) $.
\begin{corollary}\label{cor controle transformee de Laplace}
	Almost surely, $ \log (\cL_n(0)) \sim \log n$ as $n\to \infty$.
\end{corollary}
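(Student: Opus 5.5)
We prove matching lower and upper bounds on $\log \cL_n(0)$, where $\cL_n(0)=\sum_{v\in\T_n} e^{-S_{d(v)}}$.

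\emph{Lower bound.} Fix $\vp\in(0,1)$. By Lemma \ref{lemme controle hauteur totale}, almost surely for all $n$ large enough every vertex satisfies $d(v)\le (3+2e/c)\log n$. By \eqref{condition ergodicite} with $\ell=0$, there is $c_\vp>0$ such that $-S_k\ge -\vp k - \log(1/c_\vp)$ for all $k$. Hence each term satisfies
\[
e^{-S_{d(v)}}\ge c_\vp n^{-\vp(3+2e/c)}.
\]
Summing over the $n+1$ vertices gives
\[
\cL_n(0)\ge c_\vp (n+1) n^{-\vp(3+2e/c)}.
\]
Therefore $\liminf_n \log\cL_n(0)/\log n \ge 1-\vp(3+2e/c)$ almost surely. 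Letting $\vp\downarrow0$ along a countable sequence yields $\liminf \ge 1$ almost surely.

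\emph{Upper bound.} The same argument works, using the other inequality $-S_k\le \vp k+\log(1/c_\vp)$. Each term is then at most $c_\vp^{-1} n^{\vp(3+2e/c)}$, so
\[
\cL_n(0)\le c_\vp^{-1}(n+1) n^{\vp(3+2e/c)}.
\]
This gives $\limsup_n \log\cL_n(0)/\log n\le 1$ almost surely. Alternatively, the martingale route is available: since $M_n(0)=\cL_n(0)/C_n(0)$ is a positive martingale, it converges almost surely, and Lemma \ref{lemme equivalent C n} gives $\log C_n(0)=\mathrm{h}_n+O(1)\le \log(n)/c+O(1)$. That route only yields the cruder bound $\limsup\le 1/c$, so the direct bound above is the one to use.

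The only genuine input is the almost sure logarithmic bound on the depth from Lemma \ref{lemme controle hauteur totale}. Combined with the subexponential behaviour of $e^{\pm S_k}$ coming from \eqref{condition ergodicite}, it shows that $\cL_n(0)=n^{1+o(1)}$ almost surely, which is the claim. No step is a real obstacle. The one point to check is that the $\vp$-dependence is harmless: $c_\vp$ is a constant, so $\log c_\vp/\log n\to0$, and intersecting countably many almost sure events over $\vp=1/m$ finishes the proof.
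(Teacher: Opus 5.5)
Your proposal is correct and matches the paper's proof: both combine the almost sure depth bound $d(\T_n)\le(3+2e/c)\log n$ from Lemma \ref{lemme controle hauteur totale} with the two-sided estimate $C_\vp^{-1}e^{-\vp d}\le e^{-S_d}\le C_\vp e^{\vp d}$ from \eqref{condition ergodicite}, and sum over the $n+1$ vertices. The countable intersection over $\vp$ is harmless but not even needed, since the only random event used, the depth bound, does not depend on $\vp$.
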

\begin{proof}
	Let $\vp>0$. By Lemma \ref{lemme controle hauteur totale}, we know that a.s.\@ $d(\T_n)\le (3+2e/c)\log n$ for all $n$ large enough. Moreover, by \eqref{condition ergodicite}, there exists $C_\vp >0$ such that for all $d \ge 0$,
	\[
	\frac{1}{C_\vp} e^{-\vp d} \le e^{-S_d} \le C_\vp e^{\vp d}.
	\]
	Thus, we get that almost surely, for all $n$ large enough,
	\begin{multline*}
	\frac{1}{C_\vp} (n+1)e^{-\vp(3+2e/c)\log n }\le \sum_{v \in \T_n} \frac{1}{C_\vp} e^{-\vp d(\T_n)} \le \cL_n(0)\le \sum_{v \in \T_n} C_\vp e^{\vp d(\T_n)} \le (n+1) C_\vp e^{\vp(3+2e/c)\log n }.
	\end{multline*}
	Thus, for all $\vp>0$, almost surely, for all $n$ large enough, $(1-\vp) \log n \le \log (\cL_n(0)) \le (1+\vp) \log n$. This ends the proof.
\end{proof}

Equipped with the above lemmas, let us show the final result of the subsection.
\begin{lemma}\label{lemme limite non nulle}
	Almost surely, $M_\infty(0)> 0$. In particular, a.s.\@ the set of $z \in (-\infty, 1)$ such that $M_\infty(z) =0$ is at most countable and its elements are isolated.
\end{lemma}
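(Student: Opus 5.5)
The plan is to control $\log M_n(0)$ directly. Write $X_n \coloneqq e^{-S_{d(v_{n+1})}}$. Since $\cL_{n+1}(0)=\cL_n(0)+X_n$ and $C_{n+1}(0)=C_n(0)(1+1/Z_n)$, we have
\[
\log M_{n+1}(0)-\log M_n(0) = \log\Big(1+\frac{X_n}{\cL_n(0)}\Big) - \log\Big(1+\frac{1}{Z_n}\Big).
\]
By Lemma \ref{lemme martingale} at $z=0$, $\E(X_n\vert\mathcal{F}_n)=\cL_n(0)/Z_n$. Set $\xi_n \coloneqq X_n/\cL_n(0)-1/Z_n$, so that $\E(\xi_n\vert\mathcal{F}_n)=0$. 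We use $\log(1+x)\ge x-x^2$ for $x\ge 0$ and $\log(1+y)\le y$, together with $0<1/Z_n\le 1/(c(n+1))$. This gives
\[
\log M_{n+1}(0)-\log M_n(0)\ \ge\ \xi_n - \Big(\frac{X_n}{\cL_n(0)}\Big)^2 .
\]
Summing from $n_0$ to $n-1$, it suffices to show two things almost surely: that $\sum_k (X_k/\cL_k(0))^2<\infty$, and that the martingale $\sum_{k<n}\xi_k$ converges. Then $\log M_n(0)$ is bounded below, hence $M_\infty(0)=\lim M_n(0)>0$. The limit exists by Proposition \ref{prop cv unif martingale}, since $0\in\mathcal{E}_\beta$.

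For the bounds, fix a small $\vp>0$. By \eqref{condition ergodicite} we have $e^{-S_d}\le C_\vp e^{\vp d}$. By Lemma \ref{lemme controle hauteur totale}, a.s.\@ for $n$ large, $d(\T_{n+1})\le (3+2e/c)\log(n+1)$, so $X_n\le C_\vp n^{\vp(3+2e/c)+o(1)}$. By Corollary \ref{cor controle transformee de Laplace}, a.s.\@ $\cL_n(0)\ge n^{1-\vp}$ for $n$ large. Hence a.s.\@ $(X_n/\cL_n(0))^2\le n^{-2+\vp'}$ eventually, with $\vp'\to 0$ as $\vp\to0$, and this is summable. The same bound controls the conditional variances: $\E(\xi_n^2\vert \mathcal{F}_n)\le \E(X_n^2\vert\mathcal{F}_n)/\cL_n(0)^2$. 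Here $X_n$ is bounded by $C_\vp e^{\vp(d(\T_n)+1)}$, which is $\mathcal{F}_n$-measurable, so the conditional variance is at most $C_\vp^2 e^{2\vp (d(\T_n)+1)}/\cL_n(0)^2$. This is a.s.\@ summable by the two a.s.\@ estimates.

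The main technical point is that these bounds hold only eventually and almost surely, not uniformly. I would handle this with the standard fact that a martingale with increments having conditional variance $\sigma_k^2$ converges a.s.\@ on the event $\{\sum_k\sigma_k^2<\infty\}$. Concretely, stop at $\tau_A\coloneqq\inf\{n: \sum_{k\le n}\sigma_k^2>A\}$, which is a stopping time because $\sigma_{k}^2$ is $\mathcal{F}_k$-measurable. The stopped martingale is then bounded in $\mathrm{L}^2$ and converges. Finally, let $A\to\infty$ over the integers.

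For the second statement, fix $\beta\in(0,\beta_0)$. By Proposition \ref{prop cv unif martingale}, $M_\infty$ is analytic on $\mathcal{E}_\beta$, which is connected and contains $0$ and the real segment $(-\infty,1-2\beta)$. Since $M_\infty(0)\neq 0$, $M_\infty$ is not identically zero on $\mathcal{E}_\beta$. Its zeros are therefore isolated and at most countable there, and in particular on $(-\infty,1-2\beta)$. Taking the union over a sequence $\beta\downarrow 0$ covers $(-\infty,1)$. The union of countably many countable sets is countable, and each zero $z<1$ lies in the open set $\mathcal{E}_\beta$ for $\beta$ small, where it is isolated.
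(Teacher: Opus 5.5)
Your proof is correct, and it reaches the bound on $\log M_n(0)$ by a different route from the paper's.

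\textbf{The paper's route.} The paper works on the good events $\mathcal{A}_n=\{d(\T_n)\le(3+2e/c)\log n,\ \cL_n(0)\ge n^{3/4}\}$. It shows that $u_n=\E\big((\log M_n(0))^2{\bf 1}_{\bigcap_{i=k}^n\mathcal{A}_i}\big)$ satisfies $u_{n+1}\le(1+Kn^{-5/4})u_n+Kn^{-5/4}$. Getting this recursion requires expanding the square. The cross term $2\log M_n(0)\,(\cdots)$ is absorbed into $(1+(\log M_n(0))^2)$ times the conditional drift of $\log M_n(0)$, so the drift has to be controlled from both sides. The paper then concludes by Fatou's lemma, obtaining $\E\big((\log M_\infty(0))^2{\bf 1}_{\mathcal{B}_k}\big)<\infty$, and lets $k\to\infty$.

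\textbf{Your route.} You use only the one-sided inequality $\log(1+x)\ge x-x^2$. This bounds each increment of $\log M_n(0)$ from below by a martingale difference $\xi_n$ minus $(X_n/\cL_n(0))^2$, whose sum is a.s.\ finite. You then conclude pathwise from the a.s.\ convergence of a square-integrable martingale on the event where its predictable bracket is finite. The localisation uses the stopping time $\tau_A$, built from the $\mathcal{F}_n$-measurable bound $\sigma_n^2$. This is exactly why you correctly bound $X_n$ through $d(\T_n)+1$ rather than $d(\T_{n+1})$.

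\textbf{Comparison.} Both arguments rest on the same two inputs, Lemma~\ref{lemme controle hauteur totale} and Corollary~\ref{cor controle transformee de Laplace}.
\begin{itemize}
\item Your version is shorter. It avoids the two-sided drift estimate and the somewhat awkward handling of the non-predictable indicator ${\bf 1}_{\mathcal{A}_{n+1}}$.
\item The paper's version gives something yours does not: a second-moment bound on $\log M_\infty(0)$ on the events $\mathcal{B}_k$.
\item For the second statement, your argument is more careful than the paper's one-line appeal to analyticity. You note that $\mathcal{E}_\beta$ is connected and contains $0$, and you take a countable union over $\beta\downarrow 0$.
\end{itemize}
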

\begin{proof}
	It suffices to prove that $M_\infty(0)> 0$ almost surely since we know by Proposition \ref{prop cv unif martingale} that the function $z \mapsto M_\infty(z)$ is analytic. For all $n\ge n_0$, let $\mathcal{A}_n$ be the event that we have $d(\T_n)\le (3+2e/c)\log n$ and $\cL_n(0) \ge n^{3/4}$. For all $k\ge n_0$, let $\mathcal{B}_k \coloneqq \bigcap_{n\ge k} \mathcal{A}_n$. Note that by Lemma \ref{lemme controle hauteur totale} and Corollary \ref{cor controle transformee de Laplace}, we know that $\bigcup_{k\ge n_0}\mathcal{B}_k$ has full probability.
	
	Let $k\ge n_0$. Let us show that the sequence $(u_n)_{n\ge k}$ defined by
	\[
	u_n\coloneqq \E\left(\vert \log M_n(0)\vert^2 {\bf 1}_{\bigcap_{i=k}^n \mathcal{A}_i}\right)
	\]
	is bounded. Note that $M_n(0)>0$ for all $n\ge n_0$ so that the above display is well-defined.
	
	Let us first compute
	\begin{align*}
		\E&\left( \left. (\log M_{n+1}(0))^2{\bf 1}_{\bigcap_{i=k}^{n+1} \mathcal{A}_i} \right\vert \mathcal{F}_n \right)
		\le \E\left( \left. (\log M_{n+1}(0))^2{\bf 1}_{\bigcap_{i=k}^{n} \mathcal{A}_i} \right\vert \mathcal{F}_n \right) \\
		=&{\bf 1}_{\bigcap_{i=k}^{n} \mathcal{A}_i}
		\sum_{v \in \T_n} \frac{f(d(v))}{Z_n} \left( \log\left(\cL_n(0)+ e^{-S_{d(v)+1}}\right) -\log C_{n+1}(0)\right)^2\\
		=&{\bf 1}_{\bigcap_{i=k}^{n} \mathcal{A}_i}\sum_{v \in \T_n} \frac{f(d(v))}{Z_n} \left(\log(M_n(0)) + \log \left(1+ \frac{e^{-S_{d(v)+1}} }{\cL_n(0)}\right) -\log\left(1+\frac{1}{Z_n}\right)\right)^2\\
		=&{\bf 1}_{\bigcap_{i=k}^{n} \mathcal{A}_i} \left( (\log M_n(0))^2 + \sum_{v \in \T_n} \frac{f(d(v))}{Z_n} \left(\log \left(1+ \frac{e^{-S_{d(v)+1}} }{\cL_n(0)}\right) -\log\left(1+\frac{1}{Z_n}\right)\right)^2 \right. \\
		&\left.+ \sum_{v \in \T_n} \frac{f(d(v))}{Z_n}2(\log M_n(0))\left(\log \left(1+ \frac{e^{-S_{d(v)+1}} }{\cL_n(0)}\right) -\log\left(1+\frac{1}{Z_n}\right)\right)  \right).
	\end{align*}
	Moreover, on the event $\mathcal{A}_n$, the first sum in the last equality can be bounded from above as follows. For all $\vp>0$, there is a constant $C_\vp>0$ such that for all $n\ge k$, on the event $\mathcal{A}_n$,
	\begin{align}
		\sum_{v \in \T_n} \frac{f(d(v))}{Z_n} \left(\log \left(1+ \frac{e^{-S_{d(v)+1}} }{\cL_n(0)}\right) -\log\left(1+\frac{1}{Z_n}\right)\right)^2&\le
		\sum_{v \in \T_n} \frac{f(d(v))}{Z_n}C_\vp \left( \frac{e^{2\vp (d(\T_n)+1)}}{n^{3/2}} + \frac{1}{n^2} \right) \notag \\
		&= C_\vp \left( \frac{e^{2\vp (1+(3+2e/c)\log n)}}{n^{3/2}} + \frac{1}{n^2} \right), \label{eq premiere somme}
	\end{align}
	where in the inequality, we used \eqref{condition ergodicite} to control $S_{d(v)+1}$ and we used that $Z_n \ge cn$ since $f\ge c$.
	
	For the last sum, one can note that
	\begin{multline}\label{eq derniere somme}
		 \left\vert\sum_{v \in \T_n} \frac{f(d(v))}{Z_n}2(\log M_n(0))\left(\log \left(1+ \frac{e^{-S_{d(v)+1}} }{\cL_n(0)}\right) -\log\left(1+\frac{1}{Z_n}\right)\right) \right\vert \\ 
		 \le (1+ (\log M_n(0))^2 ) \left\vert \sum_{v \in \T_n} \frac{f(d(v))}{Z_n}\left(  \log \left(1+ \frac{e^{-S_{d(v)+1}} }{\cL_n(0)}\right) -\log\left(1+\frac{1}{Z_n}\right)\right) \right\vert 
	\end{multline}
	On the event $\mathcal{A}_n$ the sum in the absolute value on the right-hand side of \eqref{eq derniere somme} can be bounded from above by
	\[
	\sum_{v \in \T_n} \frac{f(d(v))}{Z_n} \left( \frac{e^{-S_{d(v)+1}}}{\cL_n(0)} - \log \left(1+\frac{1}{Z_n}\right)\right)= \frac{1}{Z_n} - \log \left(1+ \frac{1}{Z_n}\right) \le \frac{C}{n^2},
	\]
	for some constant $C>0$ (using that $Z_n \ge cn$ once more). Moreover, using that $\log(1+x) \ge x-x^2/2$ for all $x\ge 0$, for all $\vp>0$, there is a constant $\widetilde{C}_\vp>0$ such that for all $n\ge k$, on the event $\mathcal{A}_n$, the sum in the absolute value on the right-hand side of \eqref{eq derniere somme} can be bounded from below by
	\begin{align*}
	\sum_{v \in \T_n} &\frac{f(d(v))}{Z_n}\left( \frac{e^{-S_{d(v)+1}}}{\cL_n(0)} - \left(\frac{e^{-S_{d(v)+1}}}{\cL_n(0)} \right)^2 \right) - \log \left(1+\frac{1}{Z_n}\right)\\
	&\ge  \frac{1}{Z_n} - \log \left(1+ \frac{1}{Z_n}\right)  - \sum_{v \in \T_n} \frac{f(d(v))}{Z_n} \widetilde{C}_\vp \frac{e^{2\vp (d(\T_n)+1)}}{n^{3/2}} \\
	&\ge - \widetilde{C}_\vp \frac{e^{2\vp(1+(3+2e/c)\log n)}}{n^{3/2}}.
	\end{align*}
	Thus, for all $\vp>0$, there exists $C'_\vp>0$ such that for all $n\ge k$, on the event $\mathcal{A}_n$, 
	\begin{multline}\label{eq derniere somme 2}
		\left\vert\sum_{v \in \T_n} \frac{f(d(v))}{Z_n}2(\log M_n(0))\left(\log \left(1+ \frac{e^{-S_{d(v)+1}} }{\cL_n(0)}\right) -\log\left(1+\frac{1}{Z_n}\right)\right) \right\vert \\
		\le C'_\vp (1+ (\log M_n(0))^2) \left( \frac{e^{2\vp(1+(3+2e/c)\log n)}}{n^{3/2}} +  \frac{1}{n^2} \right).
	\end{multline}
	Combining \eqref{eq derniere somme 2} with \eqref{eq premiere somme}, and taking $\vp>0$ small enough, we deduce that there exists a constant $K>0$ such that for all $n\ge k$,
	\[
	\E\left(\left. (\log M_{n+1}(0))^2 {\bf 1}_{\bigcap_{i=k}^{n+1} \mathcal{A}_i} \right\vert \mathcal{F}_n\right)\le {\bf 1}_{\bigcap_{i=k}^n \mathcal{A}_i} \left( \left(1+ \frac{K}{n^{5/4}} \right) (\log M_n(0))^2 + \frac{K}{n^{5/4}} \right).
	\]
	Taking the expectation, we deduce that for all $n\ge k$,
	\[
	u_{n+1} \le \left(1 + \frac{K}{n^{5/4}}\right) u_n + \frac{K}{n^{5/4}}.
	\]
	One can then easily see that $(u_n)_{n\ge k}$ is bounded (letting $v_n = 1+u_n$, we see that $v_{n+1} \le (1+K/n^{5/4})v_n$).
	
	Recall that $\mathcal{B}_k = \bigcap_{n\ge k} \mathcal{A}_n$. Note that for all $n\ge k$,
	\[
	\E\left((\log M_n(0))^2 {\bf 1}_{\mathcal{B}_k} \right)\le u_n.
	\]
	In particular, by Fatou's lemma, since $M_n(0)\to M_\infty(0)$ almost surely by Proposition \ref{prop cv unif martingale}, we conclude that
	\[
	\E\left((\log M_\infty(0))^2  {\bf 1}_{\mathcal{B}_k} \right) \le \liminf_{n\to \infty}\E\left((\log M_n(0))^2 {\bf 1}_{\mathcal{B}_k} \right) <\infty. 
	\]
	Thus, almost surely, $M_\infty(0) >0$ on the event $\mathcal{B}_k$. Letting $k\to \infty$ proves that $M_\infty(0)>0$ almost surely. Finally, recall from Proposition \ref{prop cv unif martingale} that $z\mapsto M_\infty(z)$ is a random analytic function (as a uniform limit of analytic functions). Therefore, since the zeros of a non-zero analytic function are at most countable and isolated, we conclude that almost surely, on the interval $(-\infty, 1)$, the function $z\mapsto M_\infty(z)$ has at most countably many zeros that are isolated.
\end{proof}
\subsection{Concentration}\label{sous-section concentration}
This subsection is devoted to the proof of the following concentration bound.
\begin{lemma}\label{lemme controle queue integrale}
	For every compact subset $K \subset (-\infty, 1)$, for all $a>0$, there exists $\vp>0$ such that almost surely,
	\[
	\sup_{\beta \in K} e^{-e^\beta \mathrm{h}_n} \int_a^\pi \left\vert \cL_n(\beta+iu) \right\vert du  = O\left(\frac{1}{n^\vp}\right).
	\]
\end{lemma}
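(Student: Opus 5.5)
The plan is to factor $\cL_n(z)=M_n(z)C_n(z)$ and exploit that on $\{\Re(z)=\beta,\ \mathrm{Im}(z)=u\in[a,\pi]\}$ the deterministic-looking factor is small. By Lemma \ref{lemme equivalent C n},
\[|C_n(\beta+iu)|=\exp\big(e^\beta\cos(u)\,\mathrm{h}_n+O(1)\big)\le e^{e^\beta \mathrm{h}_n}\exp\big(-e^\beta(1-\cos a)\mathrm{h}_n+O(1)\big).\]
This gains a factor $\exp(-\gamma_0\mathrm{h}_n)$ with $\gamma_0=e^{\beta}(1-\cos a)>0$, uniformly in $\beta\in K$, $u\in[a,\pi]$. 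What remains is to show that $|M_n(\beta+iu)|$ grows no faster than $\exp\big((\gamma_0-\delta)\mathrm{h}_n\big)$ for some $\delta>0$. Since $\mathrm{h}_n\ge c\log n$ by \eqref{eq encadrement h n}, this would yield the required $O(n^{-\vp})$ bound.

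\textbf{Obstacle.} The main difficulty is that Proposition \ref{prop controle increments martingale} and Lemma \ref{lemme 37} measure growth in powers of $n$, with exponents containing $\log(n)/c$. They compare badly with the gain $e^{-\gamma_0 \mathrm{h}_n}$, which only gives $n^{-c\gamma_0}$. The constants $1/c$ and $c$ do not match, so a direct application of Lemma \ref{lemme 37}(i) fails. I will therefore measure growth of $M_n$ on the random scale $\mathrm{h}_n$ rather than $\log n$, using the fact that $\mathrm{h}_{n+1}$ is $\mathcal{F}_n$-measurable.

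\textbf{Weighted martingale.} Fix $z=\beta+iu$, $p\in(1,2)$ close to $1$, and a small $\vp>0$. Set $x=p\beta+(p-1)\vp$. From \eqref{eq majoration moment conditionnel increment}, writing $\cL_n(x)=M_n(x)C_n(x)$ and using Lemma \ref{lemme equivalent C n}, we get
\[\E\big(|M_{n+1}(z)-M_n(z)|^p\,\big|\,\mathcal{F}_n\big)\lesssim \frac{1}{n^p}|M_n(z)|^p+\frac1n M_n(x)\exp\big((e^{x}-pe^\beta\cos u)\mathrm{h}_n\big).\]
Choose a real $\gamma$ with $p\gamma=e^x-pe^\beta\cos u+\eta$ for a small $\eta>0$, and consider the martingale transform
\[\widetilde N_n(z)=\sum_{k<n}e^{-\gamma \mathrm{h}_{k+1}}\big(M_{k+1}(z)-M_k(z)\big),\]
which is a martingale because the weights are predictable. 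Its conditional $p$-th moment increments are then bounded by $\frac1k M_k(x)e^{-\eta \mathrm{h}_k}+k^{-p}e^{-p\gamma \mathrm{h}_k}|M_k(z)|^p$. The first term is handled by taking $\beta$ (hence $x$) in the region where \eqref{eq M n borne dans L p} and the a.s.\ convergence of $M_k(x)$ hold; for $x$ close to $1$ one instead uses the crude $\mathrm{L}^1$ bound of Lemma \ref{lemme moment Lp martingale} together with $e^{-\eta\mathrm{h}_k}\le k^{-\eta c}$. The second term is absorbed by a bootstrap or Gronwall argument. Both terms are summable in expectation, so by \eqref{eq lemme Biggins} and the Borel--Cantelli lemma, $\widetilde N_n(z)$ converges a.s. Since $e^{\gamma\mathrm{h}_{k+1}}$ is positive and increasing to $\infty$ (here $\gamma>0$ because $\gamma\approx e^\beta(1-\cos u)>0$), a Kronecker-type summation by parts gives $|M_n(z)|=O(e^{\gamma \mathrm{h}_n})$ a.s.

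\textbf{Conclusion and uniformity.} Combining the two estimates,
\[e^{-e^\beta \mathrm{h}_n}|\cL_n(z)|\lesssim \exp\big((\gamma+e^\beta\cos u-e^\beta)\mathrm{h}_n\big)=\exp\Big(\big(e^{x}/p+\eta/p-e^\beta\big)\mathrm{h}_n\Big),\]
and $e^{p\beta+(p-1)\vp}/p-e^\beta\approx (p-1)e^\beta(\beta+\vp-1)<0$ for $p$ close to $1$ and $\vp$ small, since $K\subset(-\infty,1)$ is compact. Taking $\eta$ small gives decay $e^{-\delta\mathrm{h}_n}\le n^{-\delta c}$. For uniformity in $(\beta,u)\in K\times[a,\pi]$, I will cover this compact set by finitely many small discs on which a single real $\gamma$ works. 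On each disc I run the argument for the analytic family $z\mapsto\widetilde N_n(z)$ on a slightly larger disc, getting sup bounds via Cauchy's integral formula as in the proof of Lemma \ref{lemme 37}. Finally, integrating over $u\in[a,\pi]$ preserves the bound.
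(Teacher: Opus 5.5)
Your proof is correct, but it takes a different route from the paper. Both arguments start from $\cL_n=C_nM_n$ and the gain $\vert C_n(\beta+iu)\vert e^{-e^\beta\mathrm{h}_n}=\exp(-(1-\cos(u))e^\beta\mathrm{h}_n+O(1))$. Both also face the same obstacle: the natural clock is the random $\mathrm{h}_n$, not $\log n$.

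The paper resolves this by moving everything to the $\log n$ scale:
\begin{enumerate}
\item It bounds $\E(\vert M_{2n}(z)-M_n(z)\vert^p\,\vert\,\mathcal{F}_n)$ by $e^{(e^{x}-p\Re(e^z))\mathrm{h}_n}$ times an $\mathrm{L}^q$-bounded factor (Lemma \ref{lemme variante controle increments martingale}).
\item It removes the random exponential using Hölder's inequality and $\E(e^{\alpha\mathrm{h}_n})=O(n^{\alpha+\vp})$ (Lemma \ref{lemme moment exponentiel h n}).
\item It applies Lemma \ref{lemme 37}(i) as a black box.
\item It converts $n^{\alpha(z)}$ back into $e^{\alpha(z)\mathrm{h}_n}$ via $\mathrm{h}_n\sim\log n$ a.s.\ (Lemma \ref{lemme equivalent ps h n}), which itself relies on $M_\infty(0)>0$ from Lemma \ref{lemme limite non nulle}.
\end{enumerate}

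Your martingale transform with predictable weights $e^{-\gamma\mathrm{h}_{k+1}}$, followed by Kronecker's lemma, stays on the $\mathrm{h}_n$ clock throughout. The only information on $\mathrm{h}_n$ you need is the deterministic bound $\mathrm{h}_k\ge c\log k-O(1)$. This makes the lemma independent of Lemmas \ref{lemme moment exponentiel h n}, \ref{lemme equivalent ps h n} and \ref{lemme limite non nulle}, which is a genuine simplification.

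The price is that Lemma \ref{lemme 37} can no longer be quoted, so you must do the uniformity in $z$ yourself. You need a constant real $\gamma$ on each small disc, so that $z\mapsto\widetilde N_n(z)$ stays analytic. You then need Cauchy's formula combined with maximal inequalities to get locally uniform a.s.\ convergence of $\widetilde N_n$. Finally you need a uniform Kronecker lemma, which is harmless because the normalisation $e^{\gamma\mathrm{h}_{k+1}}$ does not depend on $z$.

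Two steps are easier than you suggest:
\begin{itemize}
\item The term $k^{-p}\vert M_k(z)\vert^p$ needs no bootstrap or Gronwall argument. The first part of Lemma \ref{lemme moment Lp martingale} bounds it by $C_\vp k^{-1}\cL_k(x)/\vert C_k(z)\vert^p$, which has the same form as the other term; this is exactly how Lemma \ref{lemme variante controle increments martingale} proceeds.
\item For the other term, the identity $\E(M_k(x))=\E(M_{n_0}(x))<\infty$, valid for every real $x<2$, already gives summability against $k^{-1-c\eta}$. So neither \eqref{eq M n borne dans L p} nor a separate case for $x$ close to $1$ is needed.
\end{itemize}
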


In order to prove the above lemma, we need a more precise control on the martingales $M_n(z)$. We start by proving a variant of Proposition \ref{prop controle increments martingale}.
\begin{lemma}\label{lemme variante controle increments martingale}
	Let $K \subset \mathcal{E}$ be a compact subset. Then there exist $p_0 \in (1,2)$ and $\vp_0 \in (0, 2-p_0)$ such that for all $p \in (1,p_0)$, for all $\vp \in (0,2-p)\cap (0, \vp_0)$, almost surely,
	\[
 \sup_{n\ge n_0} \sup_{z \in K}\left( \E\left( \left. \left\vert M_{2n}(z)- M_n(z)\right\vert^p \right\vert \mathcal{F}_n\right) \times \exp\left( -\left(e^{p \Re(z) +(p-1)\vp}-p\Re(e^z)\right) \mathrm{h}_n \right)\right) <\infty.
	\] 
	Moreover, for $q>1$ close enough to $1$, 
	\[
	\sup_{n\ge n_0} \sup_{z \in K}  \E\left(  \left( \E\left( \left. \left\vert M_{2n}(z)- M_n(z)\right\vert^p \right\vert \mathcal{F}_n\right) \times \exp\left( -\left(e^{p \Re(z) +(p-1)\vp}-p\Re(e^z)\right) \mathrm{h}_n \right)\right)^q \right) <\infty.
	\] 
\end{lemma}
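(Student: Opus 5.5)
We follow the proof of Proposition \ref{prop controle increments martingale}, but keep the conditional bound \eqref{eq majoration moment conditionnel increment} instead of taking expectations. Write $\gamma(z)\coloneqq e^{p\Re(z)+(p-1)\vp}-p\Re(e^z)$, and set $x\coloneqq p\Re(z)+(p-1)\vp$ and $w\coloneqq x$. The starting point is \eqref{eq lemme Biggins} with $j=n$, followed by the tower property:
\[
\E\left(\left.\vert M_{2n}(z)-M_n(z)\vert^p\right\vert\mathcal{F}_n\right)\le 2^p\sum_{k=n}^{2n-1}\E\left(\left.\E\left(\left.\vert M_{k+1}(z)-M_k(z)\vert^p\right\vert\mathcal{F}_k\right)\right\vert\mathcal{F}_n\right).
\]
Into this we insert \eqref{eq majoration moment conditionnel increment}. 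The first term, $c(K,p)k^{-p}\vert M_k(z)\vert^p$, is bounded using Lemma \ref{lemme moment Lp martingale}:
\[
\vert M_k(z)\vert^p\le C_\vp k^{p-1}\cL_k(w)/\vert C_k(z)\vert^p .
\]
Hence both terms are of order $k^{-1}\cL_k(w)/\vert C_k(z)\vert^p$, and it remains to control
\[
\E\left(\left.\cL_k(w)\right\vert\mathcal{F}_n\right)/\vert C_k(z)\vert^p
\]
for $n\le k<2n$. There is one subtlety: $C_k(z)$ is not $\mathcal{F}_n$-measurable. However, $\vert C_k(z)\vert^{-p}=\vert C_n(z)\vert^{-p}\prod_{j=n}^{k-1}\vert 1+e^z/Z_j\vert^{-p}$, and the product is bounded by a deterministic constant uniformly in $z\in K$ and $k\le 2n$, since $\sum_{j=n}^{2n}1/(cj)=O(1)$. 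Likewise, by the martingale property,
\[
\E(\cL_k(w)\vert\mathcal{F}_n)=\E\left(\left. C_k(w)\right/C_n(w)\,\cdot\,\ldots\right)\le e^{O(1)}\cL_n(w),
\]
because $C_k(w)/C_n(w)=\prod_{j=n}^{k-1}(1+e^w/Z_j)\le e^{O(1)}$ deterministically.

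Summing over $k$ in a window of length $n$ cancels the factor $1/k$, so almost surely, uniformly in $z\in K$,
\[
\E\left(\left.\vert M_{2n}(z)-M_n(z)\vert^p\right\vert\mathcal{F}_n\right)\le C\,\frac{\cL_n(w)}{\vert C_n(z)\vert^p}=C\,\frac{C_n(w)}{\vert C_n(z)\vert^p}\,M_n(w)=e^{\gamma(z)\mathrm{h}_n+O(1)}M_n(w),
\]
where the last step is Lemma \ref{lemme equivalent C n}. The $O(1)$ there is deterministically bounded, which requires $w$ to stay in a compact subset of $\mathcal{E}'$. This is guaranteed by choosing $p_0$ and $\vp_0$ small enough that $p\Re(z)+(p-1)\vp<2-\eta$ on $K$; this is where the restrictions on $p_0$ and $\vp_0$ enter. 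Multiplying by $e^{-\gamma(z)\mathrm{h}_n}$ therefore reduces both claims to bounds on $M_n(w)$ with real $w=p\Re(z)+(p-1)\vp$ ranging over a compact interval $K'$.

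For the first claim we need $\sup_n\sup_{w\in K'}M_n(w)<\infty$ almost surely. We would take $p_0,\vp_0$ small so that $K'\subset(-\infty,1-2\beta)$ for some $\beta<\beta_0$; then $K'$ is a compact subset of $\mathcal{E}_\beta$ (real points, so $\vert\mathrm{Im}\vert=0$), and the uniform almost sure convergence of Proposition \ref{prop cv unif martingale} makes $\sup_n\sup_{K'}M_n$ finite almost surely. This step is the main obstacle: it needs $\sup_K\Re(z)<1$ with some margin, so that $p\sup_K\Re(z)+(p-1)\vp$ still lies below $1-2\beta$. This holds for $p$ close to $1$ and $\vp$ small, because $K\subset\mathcal{E}$ is compact. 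For the second claim, \eqref{eq M n borne dans L p} gives $\sup_n\sup_{w\in K'}\E(M_n(w)^q)<\infty$ for $q>1$ close to $1$, which is exactly the required bound after raising the display above to the power $q$.
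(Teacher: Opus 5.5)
Your proof is correct and follows essentially the same route as the paper. You bound each one-step conditional increment via \eqref{eq majoration moment conditionnel increment} and Lemma \ref{lemme moment Lp martingale} by a multiple of $k^{-1}\cL_k(w)/\vert C_k(z)\vert^p$, then transfer back to time $n$ using that the window $[n,2n]$ changes $C_\cdot$ only by a deterministically bounded factor together with the martingale property of $M_\cdot(w)$, sum via \eqref{eq lemme Biggins}, and conclude with Lemma \ref{lemme equivalent C n}, Proposition \ref{prop cv unif martingale} and \eqref{eq M n borne dans L p}. The only difference is cosmetic: you compare $C_k$ with $C_n$ directly through the product, while the paper applies Lemma \ref{lemme equivalent C n} at time $n+k$ and then uses $0\le \mathrm{h}_{n+k}-\mathrm{h}_n\le 1/c$.
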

\begin{proof}
	Let $p_0 \in (1,2)$ and $\vp_0 \in (0, 2-p_0)$ such that $p_0 \Re(z)+ (p_0-1)\vp_0 <1$ for all $z \in K$. Let $p \in (1,p_0)$ and $\vp \in (0,2-p)\cap (0, \vp_0)$.
	
	By \eqref{eq majoration moment conditionnel increment} in the proof of Proposition \ref{prop controle increments martingale}, we know that there exist constants $c(K,p)>0$ and $c(\vp, K, p)>0$ such that for all $n\ge n_0$, for all $z \in K$,
	\[
	\E\left( \left. \left\vert M_{n+1}(z)-M_n(z) \right\vert^p  \right\vert \mathcal{F}_n\right)\le c(K,p) \frac{1}{n^p} \vert M_n(z) \vert^p + \frac{c(\vp, K, p)}{n} \frac{\cL_n(p\Re(z)+(p-1)\vp)}{\vert C_{n}(z)\vert^p}.
	\]
	Moreover, by Lemma \ref{lemme moment Lp martingale}, we know that
	\[
	\vert M_n(z)\vert^p\le C_\vp \frac{n^{p-1} \cL_n(p \Re(z)+(p-1)\vp)}{\vert C_n(z)\vert^p}.
	\]
	Therefore, there is a constant $c'(\vp, K, p)$ such that for all $n\ge n_0$, for all $z \in K$,
	\begin{align*}
	\E\left( \left. \left\vert M_{n+1}(z)-M_n(z) \right\vert^p  \right\vert \mathcal{F}_n\right)&\le \frac{c'(\vp, K, p)}{n} \frac{\cL_n(p\Re(z)+(p-1)\vp)}{\vert C_{n}(z)\vert^p} \\
	&= \frac{c'(\vp, K, p)}{n} \frac{C_n(p\Re(z)+(p-1)\vp)}{\vert C_n(z)\vert^p} M_n(p\Re(z)+(p-1)\vp).
	\end{align*}
	Next, by Lemma \ref{lemme equivalent C n}, for all $n\ge n_0$, for all $z \in K$,
	\begin{align*}
		\E&\left( \left. \left\vert M_{n+1}(z)-M_n(z) \right\vert^p  \right\vert \mathcal{F}_n\right)\\&\le 
	\frac{c'(\vp, K, p)}{n} \exp\left( \left(e^{p \Re(z) +(p-1)\vp}-p\Re(e^z)\right) \mathrm{h}_n +c''(\vp, K, p)\right)
	M_n(p\Re(z)+(p-1)\vp),
	\end{align*}
	where $c''(\vp, K, p)>0$ is a constant. Note that for all $k \in \lb 0, n \rb$, we have $0\le \mathrm{h}_{n+k} -\mathrm{h}_{n} \le \sum_{j=n+1}^{n+k} 1/Z_j \le \sum_{j=n+1}^{2n} 1/(cj)\le 1/c$. Therefore, for all $n\ge n_0$, for all $k \in \lb 0, n-1 \rb$, for all $z \in K$,
	\begin{align*}
		\E&\left( \left. \left\vert M_{n+k+1}(z)-M_{n+k}(z) \right\vert^p  \right\vert \mathcal{F}_{n+k}\right)\\&\le 
		\frac{c'(\vp, K, p)}{n} \exp\left( \left(e^{p \Re(z) +(p-1)\vp}-p\Re(e^z)\right) \mathrm{h}_n +c'''(\vp, K, p)\right)
		M_{n+k}(p\Re(z)+(p-1)\vp),
	\end{align*}
	where $c'''(\vp, K, p)>0$ is a constant. Taking the expectation conditionally on $\mathcal{F}_n$, we obtain for all $n\ge n_0$, for all $k \in \lb 0, n-1 \rb$, for all $z \in K$,
	\begin{align*}
		\E&\left( \left. \left\vert M_{n+k+1}(z)-M_{n+k}(z) \right\vert^p  \right\vert \mathcal{F}_{n}\right)\\&\le 
		\frac{c'(\vp, K, p)}{n} \exp\left( \left(e^{p \Re(z) +(p-1)\vp}-p\Re(e^z)\right) \mathrm{h}_n +c'''(\vp, K, p)\right)
		M_{n}(p\Re(z)+(p-1)\vp).
	\end{align*}
	Finally, by \eqref{eq lemme Biggins}, we conclude that for all $n\ge n_0$, for all $z \in K$,
	\begin{align*}
		\E&\left( \left. \left\vert M_{2n}(z)-M_{n}(z) \right\vert^p  \right\vert \mathcal{F}_{n}\right)\\&\le 
		{2^p c'(\vp, K, p)} \exp\left( \left(e^{p \Re(z) +(p-1)\vp}-p\Re(e^z)\right) \mathrm{h}_n +c'''(\vp, K, p)\right)
		M_{n}(p\Re(z)+(p-1)\vp).
	\end{align*}
	By Proposition \ref{prop cv unif martingale} and because $\max_{z \in K} p \Re(z) + (p-1)\vp <1$, the sequence of functions $(z\mapsto M_{n}(p\Re(z)+(p-1)\vp))_{n\ge n_0}$ converges uniformly on $K$ almost surely, and is bounded in $\mathrm{L}^q$ uniformly in $z \in K$ for $q>1$ close enough to $1$, hence the desired result.
\end{proof}
We then state two lemmas on the behaviour of $\mathrm{h}_n$. The first one is about its exponential moments.
\begin{lemma}\label{lemme moment exponentiel h n}
	We have $\log (\E(e^{\mathrm{h}_n})) \sim \log n$ as $n\to \infty$. Moreover, for all $\vp>0$, for all $\alpha_0>0$, we have
	\[
	\sup_{n\ge n_0, \alpha \in [0, \alpha_0]} \frac{1}{n^{\alpha+\vp}} \E\left(e^{\alpha \mathrm{h}_n}\right) <\infty.
	\]
\end{lemma}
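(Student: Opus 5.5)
The plan is to turn $e^{\alpha\mathrm{h}_n}$ into a ratio of the objects of Section~\ref{section martingale} and control numerator and denominator separately. Fix $\alpha\in(0,\alpha_0]$ and take the real parameter $w=\log\alpha$. By Lemma~\ref{lemme equivalent C n}, whenever $w$ lies in a compact subset of $\mathcal{E}'$ we have $C_n(w)=\exp(\alpha\mathrm{h}_n+O(1))$ with a deterministic $O(1)$. Hence
\[
e^{\alpha\mathrm{h}_n}\asymp C_n(w)=\frac{\cL_n(w)}{M_n(w)},
\]
and Hölder's inequality with conjugate exponents $a,b$ gives
\[
\E\left(e^{\alpha\mathrm{h}_n}\right)\le C\,\E\left(\cL_n(w)^a\right)^{1/a}\,\E\left(M_n(w)^{-b}\right)^{1/b}.
\]
The steps, in order, are as follows.

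First, bound the numerator. By \eqref{condition ergodicite}, $\cL_n(w)\le C_\delta\sum_{v\in\T_n}e^{(w+\delta)d(v)}$. I would bound its $a$-th moment using the first part of Lemma~\ref{lemme controle hauteur totale}, the $\mathrm{L}^1$ bound on $e^{(1-\vp)d(\T_n)}/n^{e/c}$, together with Jensen as in Lemma~\ref{lemme moment Lp martingale}. With $a$ close to $1$ this should give $\E(\cL_n(w)^a)^{1/a}\le n^{1+\delta'}$ only for small $w$, and that is not sharp enough.

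So the better route for the numerator is to avoid moments of $\cL_n(w)$ and use that $\E(\cL_n(w)/C_n(w))=\E(M_{n_0}(w))$ is bounded. Then use Hölder in the form $C_n(w)=C_n(w)^{\theta}\cdot C_n(w)^{1-\theta}$ and interpolate between the trivial deterministic bound $C_n(w)\le n^{\alpha/c+O(1)}$, coming from \eqref{eq encadrement h n}, and the typical size. I would iterate this bootstrap: a rough exponent improves itself because $\cL_n(w)$ is at most $n e^{(w+\delta)d(\T_n)}$ and only typical depths matter.

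Second, and this is the step I expect to be the main obstacle, control negative moments of $M_n(w)$ uniformly in $n$. I would redo the computation of Lemma~\ref{lemme limite non nulle} with $(\log M_n(0))^2$ replaced by $e^{-b\log M_n(w)}=M_n(w)^{-b}$. The conditional increment of $\log M_n(w)$ is $\log(1+e^{wd(v)+w-S_{d(v)+1}}/\cL_n(w))-\log(1+e^w/Z_n)$. Its conditional mean is $O(n^{-2})$ up to the second-order term, and its conditional second moment is $O(n^{-3/2+\delta})$ on the good events $\mathcal{A}_n$ of that proof.

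This should yield the recursion $\E(M_{n+1}^{-b}\mathbf 1|\mathcal{F}_n)\le(1+Kn^{-5/4})M_n^{-b}\mathbf 1$, and hence boundedness on $\mathcal{B}_k$. The difficulty is the complement of $\mathcal{B}_k$, where the indicator is lost. There I would fall back on the deterministic bounds $M_n(w)\ge \cL_n(w)/C_n(w)\ge n^{-\alpha/c-O(1)}$, combined with $\P(\mathcal{A}_n^c)$ decaying polynomially fast, which follows from Markov's inequality in Lemma~\ref{lemme controle hauteur totale}. Choosing $b$ small makes this contribution negligible.

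Finally, the asymptotic $\log\E(e^{\mathrm{h}_n})\sim\log n$ would follow from the upper bound at $\alpha=1$ together with a matching lower bound. For the lower bound, use $\E(\cL_n(0))\ge n^{1-\vp}$, which comes from Corollary~\ref{cor controle transformee de Laplace} and Fatou. Then apply Hölder to $\cL_n(0)=M_n(0)C_n(0)$ with $M_n(0)$ bounded in $\mathrm{L}^p$ by \eqref{eq M n borne dans L p}. This gives $\E(e^{p'\mathrm{h}_n})\ge n^{(1-\vp)p'}$, and I would bring it down to exponent $1$ using log-convexity of $\alpha\mapsto\E(e^{\alpha\mathrm{h}_n})$ together with the upper bounds already obtained. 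Uniformity in $\alpha\in[0,\alpha_0]$ comes from all constants depending only on compact ranges of $w$. If the Hölder exponent blocks large $\alpha$, I would treat $\alpha\in(1,\alpha_0]$ by the same argument on $[0,\alpha_0]$ with $w$ ranging in a compact subset of $\mathcal{E}'$, which in this form only covers $\alpha_0<e^2$.
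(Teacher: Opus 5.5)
There is a genuine gap, and it comes from working at $w=\log\alpha$.

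The statement must hold for every $\alpha_0>0$, but for large real $w$ the martingale $M_n(w)$ degenerates. By Lemma \ref{lemme controle hauteur totale} and \eqref{condition ergodicite}, almost surely $\cL_n(w)\le (n+1)C_\vp e^{(w+\vp)(3+2e/c)\log n}$ for $n$ large. On the other hand, $C_n(w)\ge \exp(ce^w\log n-O(1))$ by \eqref{eq encadrement h n}. Hence $M_n(w)\to 0$ a.s.\ as soon as $ce^w>1+(w+\vp)(3+2e/c)$. By the profile bounds in the proof of Theorem \ref{theoreme profil}, this already happens for every $w>1$, i.e.\ $\alpha>e$.

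By Fatou's lemma, $\E(M_n(w)^{-b})\to\infty$ (even after multiplying by $\mathbf{1}_{\mathcal{B}_k}$). So the $n$-uniform negative-moment bound your second step aims at is false there. A Hölder bound with the right exponent would then need sharp polynomial rates for both $\E(\cL_n(w)^a)$ and $\E(M_n(w)^{-b})$, which is harder than the lemma. Your closing sentence concedes the large-$\alpha$ case, but the obstruction is not $\mathcal{E}'$: for real $w$, the bounds $x-x^2/2\le\log(1+x)\le x$ give $C_n(w)=\exp(e^w\mathrm{h}_n+O(1))$ for every $w$.

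Even for moderate $w\neq0$, neither step is actually carried out.
\begin{itemize}
\item The numerator ``bootstrap'' is not an argument. The only identity available, $\E(\cL_n(w)/C_n(w))=\E(\cL_{n_0}(w))$, gives no upper bound on $\E(C_n(w))$, and almost-sure typical sizes say nothing about expectations.
\item The negative-moment step borrows estimates from Lemma \ref{lemme limite non nulle} that are specific to $w=0$. There the relative jump $e^{-S_{d(v)+1}}/\cL_n(0)$ is $n^{-3/4+o(1)}$ on $\mathcal{A}_n$. By contrast, $e^{w(d(v)+1)-S_{d(v)+1}}/\cL_n(w)$ is not controlled by $\mathcal{A}_n$ unless $\vert w\vert$ is tiny.
\item Uniformity in $\alpha\in[0,\alpha_0]$ does not follow from compactness, since $w=\log\alpha$ ranges over $(-\infty,\log\alpha_0]$. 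The paper treats $\alpha\in[0,1]$ by $\E(e^{\alpha\mathrm{h}_n})\le\E(e^{\mathrm{h}_n})^\alpha$.
\end{itemize}

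The missing idea, which is the core of the paper's argument, is never to leave $z=0$. There the crude bound $\cL_n(0)\le (n+1)C_{\vp'}e^{\vp' d(\T_n)}$ is sharp (exponent $1=e^0$). So \eqref{eq majoration moment exponentiel depth} gives $\E(\cL_n(0)^\alpha)\le n^{\alpha+O(\vp')}$ for all $\alpha\le\alpha_0$ at once, with no bootstrap. The paper then compares $e^{\alpha\mathrm{h}_n}\asymp C_n(\log\alpha)$ with $\cL_n(0)^\alpha$ via $\E(\cL_{n+1}(0)^\alpha\mid\mathcal{F}_n)\ge(1+\alpha/Z_n)\cL_n(0)^\alpha$.

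Your remark that only $\E(\cL_n/C_n)$ is controlled is well taken, and it applies to that step too. Since $C_n$ is random, the inequality only says that $\cL_n(0)^\alpha/C_n(\log\alpha)$ is a submartingale. It does not give $\E(\cL_n(0)^\alpha)\ge\E(C_n(\log\alpha)\cL_{n_0}(0)^\alpha)$. Likewise, Lemma \ref{lemme martingale} yields $\E(\cL_n(z)/C_n(z))=\E(\cL_{n_0}(z))$, not $\E(\cL_n(z))=\E(C_n(z))$.

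A complete proof combines the two routes:
\begin{enumerate}
\item Write $e^{\alpha\mathrm{h}_n}\asymp C_n(0)^\alpha=\cL_n(0)^\alpha M_n(0)^{-\alpha}$ and apply Cauchy--Schwarz.
\item Show $\E(M_n(0)^{-q})=O(n^{\gamma})$ for all $q,\gamma>0$ by running your second step at $w=0$ only. Using $(1+x)^{-q}\le 1-qx+\frac{q(q+1)}{2}x^2$ for $x\ge0$ and $\E(e^{-S_{d(v_{n+1})}}\mid\mathcal{F}_n)=\cL_n(0)/Z_n$, one gets $\E(M_{n+1}(0)^{-q}\mid\mathcal{F}_n)\le(1+Cn^{-5/4})M_n(0)^{-q}$ on $\{d(\T_n)\le A\log n\}$ when $\vp A$ is small.
\item Off these events, use $M_n(0)\ge 1/C_n(0)\ge n^{-1/c}$ and start the recursion at $k=\lceil n^\gamma\rceil$.
\item Use that $\P(d(\T_i)>A\log i)\le Ci^{e/c-(1-\vp)A}$ decays at any polynomial rate as $A$ grows. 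It is this decay, not a choice of $b$ (a conjugate exponent, hence $b>1$), that makes the bad events negligible.
\end{enumerate}

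For the lower bound in the first claim, Lemma \ref{lemme equivalent ps h n} (whose proof does not use the present lemma) directly gives $\P(\mathrm{h}_n\ge(1-\vp)\log n)\to1$. Your log-convexity route would need upper bounds beyond the conjugate exponent $p'$, which your argument does not provide.
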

\begin{proof}
	By Lemma \ref{lemme equivalent C n}, we know that $C_n(0) = \exp (\mathrm{h}_n + O(1))$, where the $O(1)$ is bounded in absolute value by a deterministic constant. Moreover, by Lemma \ref{lemme martingale} and a simple induction, we have $\E(\cL_n(z)) = \E(C_n(z))$ for all $n\ge n_0$. Thus,
	\begin{equation}\label{eq lien h n L n}
	\E\left(\cL_n(0)\right) = \E\left( C_n(0) \right) = \E\left(e^{\mathrm{h}_n+ O(1)}\right),
	\end{equation}
	where the $O(1)$ is bounded in absolute value by a deterministic constant.
	
	Furthermore, by Lemma \ref{lemme controle hauteur totale}, the sequence $e^{d(\T_n)/2}/n^{e/c}$ is bounded in $\mathrm{L}^1$. In particular, for all $\vp\in (0,1/2)$, for all $n\ge n_0$,
	\begin{equation}\label{eq majoration moment exponentiel depth}
	\E\left( e^{\vp d(\T_n)} \right) \le \E\left( e^{\vp d(\T_n)/(2\vp)} \right)^{2\vp} = \E\left(e^{d(\T_n)/2}\right)^{2\vp} = O\left( n^{2\vp e/c} \right),
	\end{equation}
	as $n\to \infty$. Recall that by \eqref{condition ergodicite}, there is a constant $C_\vp>0$ such that $e^{-S_d}\le C_\vp e^{\vp d}$ for all $d \ge 0$. Thus, for all $n\ge 0$,
	\begin{equation}\label{eq majoration L n}
		\cL_n(0) \le (n+1) C_\vp e^{\vp d(\T_n)}.
	\end{equation}
	Therefore, as $n\to \infty$, we have
	\[
	\E\left(\cL_n(0)\right)\le \E\left((n+1) C_\vp e^{\vp d(\T_n)} \right) = O\left(n^{1+2\vp e/c}\right).
	\]
	By \eqref{eq lien h n L n}, this proves that for all $\vp>0$,
	\[
	\limsup_{n\to \infty} \frac{\log(\E(e^{\mathrm{h}_n}))}{\log n} \le 1+ \vp.
	\]
	For the lower bound, recall from Corollary \ref{cor controle transformee de Laplace} that almost surely, $ \log (\cL_n(0)) \sim \log n$ as $n\to \infty$. In particular, for all $\vp>0$, for all $n$ large enough, with probability at least $1/2$, we have $\cL_n(0) \ge n^{1-\vp}$. Thus, for all $n$ large enough,
	\[
	\E\left(\cL_n(0)\right) \ge \frac{1}{2} n^{1-\vp}.
	\]
	By \eqref{eq lien h n L n}, this proves that for all $\vp>0$,
	\[
	\liminf_{n\to \infty} \frac{\log(\E(e^{\mathrm{h}_n}))}{\log n} \ge 1-\vp,
	\]
	thus ending the proof of the first part of the lemma. Moreover, by Hölder's inequality $\E(e^{\alpha\mathrm{h}_n}) \le \E(e^{\mathrm{h}_n})^\alpha$ for all $\alpha \in [0,1]$ so for the second part, it suffices to focus on the case $\alpha\ge 1$.
	
Let us deal with this remaining case. Let $\alpha_0>1$. For all $\alpha\ge 1$ and $n\ge n_0$, 
\begin{align*}\E(\cL_{n+1}(0)^\alpha \vert \mathcal{F}_n) &=\sum_{v \in \T_n}  \frac{f(d(v))}{Z_n} \left( \cL_n(0) + e^{-S_{d(v)+1}} \right)^\alpha  \\&= \cL_n(0)^\alpha \sum_{v \in \T_n}  \frac{f(d(v))}{Z_n} \left( 1 + \frac{e^{-S_{d(v)+1}} }{\cL_n(0)}\right)^\alpha
	\\
	&\ge \cL_n(0)^\alpha \sum_{v \in \T_n} \frac{f(d(v))}{Z_n} \left( 1 +\alpha \frac{e^{-S_{d(v)+1}} }{\cL_n(0)}\right)\\
	&= \cL_n(0)^\alpha  \left( 1+ \alpha \frac{1}{Z_n} \right).
 \end{align*}
 So, by induction,
 \[
 \E(\cL_n(0)^\alpha) \ge \E \left(C_n(\log \alpha) \cL_{n_0}(0)^\alpha\right).
 \]
 By Lemma \ref{lemme equivalent C n}, we know that $C_n(\log \alpha) = \exp (\alpha\mathrm{h}_n + O(1))$ uniformly in $\alpha \in [1, \alpha_0]$, where the $O(1)$ is bounded in absolute value by a deterministic constant. Moreover, note that $1 \le \cL_{n_0}(0)^\alpha$, so that there exists a constant $c_{\alpha_0}>0$ such that for all $n\ge n_0$, for all $\alpha \in [1, \alpha_0]$,
 \[
 \E\left(e^{\alpha\mathrm{h}_n} \right) \le  c_{\alpha_0}\E(\cL_n(0)^\alpha).
 \]
 Thus, it suffices to show that for all $\vp>0$, for all $\alpha_0>1$,
 \[
 \sup_{\alpha \in [1, \alpha_0], n\ge n_0} \frac{1}{n^{\alpha+\vp}} \E\left(\cL_n(0)^\alpha \right) <\infty.
 \]
 We proceed as in the first part of the proof. By \eqref{eq majoration L n}, we know that for all $\vp'>0$, there is $C_{\vp'}>1$ such that for all $n\ge n_0$, for all $\alpha \in [1, \alpha_0]$,
 \[
 \E\left( \cL_n(0)^\alpha \right) \le (n+1)^\alpha C_{\vp'}^\alpha \E\left(e^{\alpha \vp' d(\T_n)} \right)\le (n+1)^\alpha C_{\vp'}^{\alpha_0} \E\left(e^{\alpha_0 \vp' d(\T_n)} \right).
 \]
 We conclude by taking $\vp'$ small enough and by applying \eqref{eq majoration moment exponentiel depth}.
\end{proof}
The next lemma gives an equivalent for $\mathrm{h}_n$.
\begin{lemma}\label{lemme equivalent ps h n}
	We have $\mathrm{h}_n \sim \log n$ almost surely as $n\to \infty$. More precisely, we have $\mathrm{h}_n= \log n + O(\max_{0 \le k \le d(\T_n)}\vert S_k\vert)$ almost surely.
\end{lemma}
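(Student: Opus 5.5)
We compare $\mathrm{h}_n$ with $\log n$ through the identity $\mathrm{h}_{n+1}-\mathrm{h}_n = 1/Z_n$ and an almost sure estimate of $Z_n$. The key observation is that $Z_n$ itself is a weighted Laplace-type quantity: $Z_n=\sum_{v\in\T_n} f(d(v)) = \sum_{k}\mathbb{L}_n(k) e^{S_{k+1}-S_k}$. This is not directly one of the $\cL_n(z)$, but it can be compared to $n+1 = \sum_k \mathbb{L}_n(k)$ and to $\cL_n(0)=\sum_k \mathbb{L}_n(k)e^{-S_k}$. Instead of estimating $Z_n$ pointwise, it is more convenient to use the martingale $M_n(0)=\cL_n(0)/C_n(0)$. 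By Proposition \ref{prop cv unif martingale} and Lemma \ref{lemme limite non nulle}, $M_n(0)\to M_\infty(0)\in(0,\infty)$ almost surely, so $\log M_n(0)=O(1)$ almost surely. By Lemma \ref{lemme equivalent C n}, $\log C_n(0)=\mathrm{h}_n+O(1)$. Hence, almost surely,
\[
\mathrm{h}_n = \log \cL_n(0) + O(1).
\]

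It then remains to compare $\log\cL_n(0)$ with $\log n$. Writing $m_n\coloneqq \max_{0\le k\le d(\T_n)}\vert S_k\vert$, every vertex $v\in\T_n$ satisfies $e^{-m_n}\le e^{-S_{d(v)}}\le e^{m_n}$. Therefore
\[
(n+1)e^{-m_n}\le \cL_n(0)\le (n+1)e^{m_n},
\]
so $\log\cL_n(0)=\log n+O(m_n)$ with constant $1$. Combining this with the previous display gives $\mathrm{h}_n=\log n+O(1+m_n)$ almost surely. Since $m_n\ge \vert S_0\vert=0$, the additive $O(1)$ needs a short remark. Either the statement is read as $O(1+\max_k\vert S_k\vert)$, or we note that $m_n\to\infty$ unless $S$ is eventually bounded in a trivial way. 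In the latter case the bound $\mathrm{h}_n=\log n+O(1)$ is exactly what is claimed, interpreting the $O(\cdot)$ with the usual convention on constants.

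For the first-order statement $\mathrm{h}_n\sim\log n$, we use Lemma \ref{lemme controle hauteur totale}: almost surely $d(\T_n)\le(3+2e/c)\log n$ for $n$ large. Condition \eqref{condition ergodicite} with $\ell=0$ gives $\vert S_k\vert=o(k)$, so $m_n=o(d(\T_n))+O(1)=o(\log n)$. (Alternatively, this is just Corollary \ref{cor controle transformee de Laplace} combined with the first display.) Hence $\mathrm{h}_n/\log n\to1$ almost surely.

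The only delicate point is the use of $M_\infty(0)>0$, which is needed to get a two-sided $O(1)$ on $\log M_n(0)$. This is supplied by Lemma \ref{lemme limite non nulle}. The upper bound on $\log M_n(0)$ follows from almost sure convergence to a finite limit, and everything else is elementary.
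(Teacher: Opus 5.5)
Your proof is correct and is essentially the paper's argument: $\log M_n(0)=O(1)$ because $M_\infty(0)\in(0,\infty)$ (Lemma \ref{lemme limite non nulle}), $\log C_n(0)=\mathrm{h}_n+O(1)$ by Lemma \ref{lemme equivalent C n}, and the sandwich $(n+1)e^{-m_n}\le \cL_n(0)\le (n+1)e^{m_n}$ gives the refined estimate, while the first-order statement comes from Corollary \ref{cor controle transformee de Laplace} (equivalently, from $m_n=o(\log n)$ via Lemma \ref{lemme controle hauteur totale}). Your caveat that the error term should be read as $O(1+\max_{0\le k\le d(\T_n)}\vert S_k\vert)$ is legitimate, since for $f\equiv 1$ the maximum is identically zero while $\mathrm{h}_n-\log n$ converges to a nonzero constant.
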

\begin{proof}
	Recall from Lemma \ref{lemme equivalent C n} that $C_n(0) = \exp(\mathrm{h}_n +O(1))$ a.s.\@ as $n\to \infty$. Moreover, we know that the non-negative martingale $M_n(0)$ converges to $M_\infty(0)$ given by Proposition \ref{prop cv unif martingale}. By Lemma \ref{lemme limite non nulle}, almost surely $M_\infty(0) > 0$. Recalling that $M_n(0) = \cL_n(0)/C_n(0)$, we deduce that a.s.
	\[
	\log \cL_n(0) \mathop{\sim}\limits_{n\to \infty} \log C_n(0).
	\]
	Finally, by Corollary \ref{cor controle transformee de Laplace}, we know that $\log \cL_n(0) \sim \log n$ almost surely as $n\to \infty$, yielding the first part of the lemma.
	
	For the second part of the lemma, one can see that more precisely, we have
	\[
	(n+1)e^{-\max_{0 \le k\le d(\T_n)} \vert S_k\vert }\le \cL_n(0) \le (n+1)e^{\max_{0 \le k\le d(\T_n)} \vert S_k\vert },
	\]
	so that, recalling from above that $\cL_n(0)/C_n(0) = M_n(0) \to M_\infty(0)$, we obtain the desired result.
\end{proof}
Using the above lemmas we are able to get the following uniform control of the martingale away from the real axis.
\begin{lemma}\label{lemme controle uniforme martingale queue}
	Let $[b_1,b_2] \subset (-\infty, 1)$ be a segment and let $a \in (0,\pi)$. Then, there exists $\vp>0$ such that almost surely, as $n\to \infty$,
	\[ \sup_{\beta \in [b_1, b_2], u \in [a, \pi]} 
	e^{-(1-\cos(u))e^\beta \mathrm{h}_n} \left\vert M_n(\beta +iu)-M_{n_0}(\beta +iu) \right\vert  = O\left( \frac{1}{n^\vp}\right).
	\]
\end{lemma}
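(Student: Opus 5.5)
The plan is to run the proof of Lemma \ref{lemme 37} by hand, with the deterministic normalisation $n^{-\alpha(z)}$ replaced by the random one $e^{-(1-\cos u)e^\beta \mathrm{h}_n}$. The conditional bounds of Lemma \ref{lemme variante controle increments martingale} are designed exactly for this.

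\textbf{Step 1: exponent computation.} Write $z=\beta+iu$ and $\gamma(z)\coloneqq(1-\cos u)e^{\beta}$, so that $\Re(e^z)=e^\beta-\gamma(z)$. Then
\[
e^{p\beta+(p-1)\vp}-p\Re(e^z)-p\gamma(z)=e^{p\beta+(p-1)\vp}-pe^{\beta}=e^\beta\big((p-1)(\beta+\vp-1)+O((p-1)^2)\big).
\]
Since $\beta\le b_2<1$, I choose $\vp$ small and $p>1$ close to $1$ (within the ranges allowed by Lemma \ref{lemme variante controle increments martingale}, applied to a compact neighbourhood $K'\subset\mathcal{E}$ of $K\coloneqq[b_1,b_2]+i[a,\pi]$). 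This makes the quantity above at most $-2\delta<0$, uniformly on $K'$.

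\textbf{Step 2: block estimate.} Set $n_k\coloneqq 2^k$ and $\Delta_k(z)\coloneqq M_{n_{k+1}}(z)-M_{n_k}(z)$. Lemma \ref{lemme variante controle increments martingale} gives
\[
\E\big(|\Delta_k(z)|^p\,\big|\,\mathcal{F}_{n_k}\big)\le Y_k\, e^{(p\gamma(z)-2\delta)\mathrm{h}_{n_k}},
\]
where $\sup_k\sup_z \E(Y_k^q)<\infty$. Multiply by $e^{-p\gamma(z)\mathrm{h}_{n_k}}$, which is $\mathcal{F}_{n_k}$-measurable, and take expectations. Hölder's inequality together with the deterministic bound $\mathrm{h}_{n}\ge c\log n-C$ from \eqref{eq encadrement h n} then yields
\[
\E\big(e^{-p\gamma(z)\mathrm{h}_{n_k}}|\Delta_k(z)|^p\big)\le C\,2^{-2c\delta k},
\]
uniformly in $z\in K'$. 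To pass to a supremum over $z$, I use subharmonicity of $|\Delta_k|^p$ on disks of radius $r_k=1/k$ centred at points of $K$. Over such a disk, $\gamma$ varies by $O(1/k)$ and $\mathrm{h}_{n_k}=O(k)$, so the normalisation varies only by a bounded factor. Hence
\[
\sup_{z\in K}e^{-p\gamma(z)\mathrm{h}_{n_k}}|\Delta_k(z)|^p\le Ck^2\int_{K'}e^{-p\gamma(w)\mathrm{h}_{n_k}}|\Delta_k(w)|^p\,dw,
\]
whose expectation is $O(k^22^{-2c\delta k})$. Markov's inequality and Borel--Cantelli then give, almost surely,
\[
\sup_{z\in K}e^{-\gamma(z)\mathrm{h}_{n_k}}|\Delta_k(z)|=O(2^{-c\delta k/p}).
\]
For intermediate times $n\in[n_k,n_{k+1}]$, I apply the same argument to $\max_{n_k\le n\le n_{k+1}}|M_n(w)-M_{n_k}(w)|$: Doob's $L^p$ inequality is applied pointwise in $w$ before integrating. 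I also use the fact that $\mathrm{h}_n-\mathrm{h}_{n_k}\le 1/c$ on the block.

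\textbf{Step 3: summation.} Telescoping, $M_n-M_{n_0}$ is bounded by the sum of the block differences over $k\le\log_2 n$, plus one partial block. On $K$ we have $\gamma\ge\gamma_0\coloneqq(1-\cos a)e^{b_1}>0$. Moreover $\mathrm{h}_n-\mathrm{h}_{n_k}\ge c(\log n-k\log 2)-C$, since $1/Z_j\ge c/(j+1)$. Therefore
\[
e^{-\gamma\mathrm{h}_n}|M_n-M_{n_0}|\le C\sum_{k\le\log_2 n}n^{-c\gamma_0}2^{k(c\gamma_0-c\delta/p)},
\]
and the right-hand side is $O(n^{-\vp'})$ with $\vp'$ slightly smaller than $\min(c\gamma_0,c\delta/p)$. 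The finitely many initial blocks are handled trivially.

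\textbf{Main obstacle.} I expect the delicate step to be the uniformity over $z$ in Step 2. The normalisation is random and non-analytic, so one must check that replacing it by its value at the disk centre costs only a bounded factor on disks of radius $1/\log n$. One must also check that the resulting polynomial-in-$\log n$ loss from subharmonicity is absorbed by the geometric decay $2^{-2c\delta k}$.
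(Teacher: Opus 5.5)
Your proof is correct, but it takes a genuinely different route from the paper.

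\textbf{The paper's route.} It first turns the conditional bound of Lemma~\ref{lemme variante controle increments martingale} into an unconditional bound with a \emph{deterministic} normalisation. Hölder's inequality and the exponential moments of $\mathrm{h}_n$ from Lemma~\ref{lemme moment exponentiel h n} give $\E(|M_{2n}(z)-M_n(z)|^p)=O(n^{e^{p\Re(z)+(p-1)\vp}-p\Re(e^z)+\vp'/q'})$. The paper then invokes Lemma~\ref{lemme 37}~(i) as a black box with $\alpha(z)=(1-\cos(\mathrm{Im}(z)))e^{\Re(z)}/p+\vp''$. Finally, it converts $n^{\alpha(z)}$ back into $e^{(1-\cos u)e^\beta\mathrm{h}_n}$ using $\mathrm{h}_n\sim\log n$ almost surely (Lemma~\ref{lemme equivalent ps h n}).

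\textbf{Your route.} You keep the $\mathcal{F}_{n_k}$-measurable random weight $e^{-p\gamma(z)\mathrm{h}_{n_k}}$ inside the expectation. You then redo the dyadic, subharmonicity and Borel--Cantelli proof of Lemma~\ref{lemme 37} by hand. Your identity in Step~1 makes the random exponentials cancel exactly, leaving only $e^{-2\delta\mathrm{h}_{n_k}}$, which \eqref{eq encadrement h n} bounds deterministically.

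\textbf{What your route buys.}
\begin{itemize}
\item You only need $\sup_k\sup_z\E(Y_k)<\infty$; Hölder is not even needed.
\item Beyond that, you only use the deterministic bounds on $\mathrm{h}_n$ and on $\mathrm{h}_n-\mathrm{h}_m$. In particular, Lemmas~\ref{lemme moment exponentiel h n} and~\ref{lemme equivalent ps h n} are not needed; the latter rests on $M_\infty(0)>0$. This makes the lemma self-contained.
\item You get an explicit rate $\min(c\gamma_0,c\delta/p)$.
\item You avoid the paper's final conversion step. There, the additive $\vp''$ in $\alpha(z)$ must be beaten by the small gain $(1-1/p)(1-\cos u)e^\beta$ together with the unspecified rate coming out of Lemma~\ref{lemme 37}.
\end{itemize}

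\textbf{What it costs.} You must handle a non-analytic weight in the mean-value inequality, and you do this correctly. On discs of radius $1/k$, Lipschitz continuity of $\gamma$ and $\mathrm{h}_{n_k}=O(k)$ make the weight vary by at most a bounded factor. The resulting $k^2$ loss is absorbed by $2^{-2c\delta k}$. You also correctly pull the block maximum inside the integral before applying Doob's inequality pointwise in $w$.

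One small point: the bound $\mathrm{h}_n-\mathrm{h}_{n_k}\le 1/c$ is unnecessary there, since $\mathrm{h}$ is nondecreasing and so $e^{-\gamma\mathrm{h}_n}\le e^{-\gamma\mathrm{h}_{n_k}}$ on each block.

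The paper's version is shorter on the page only because it reuses Lemma~\ref{lemme 37} as a black box.
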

\begin{proof}
	For all $p \in (1,2)$, $\vp \in (0, 2-p)$, for all $z\in \mathcal{E}$, for all $n\ge n_0$, let
	\[
	Z^{\vp, p}_n(z) \coloneqq
	 \E\left( \left. \left\vert M_{2n}(z)- M_n(z)\right\vert^p \right\vert \mathcal{F}_n\right) \times \exp\left( -\left(e^{p \Re(z) +(p-1)\vp}-p\Re(e^z)\right) \mathrm{h}_n \right) .
	\]
	Let $a \in (0, \pi)$ and $b_0<0$. Let $\mathcal{O} \coloneqq \{ \beta+ i u; \ b_0< \beta <1 \text{ and } u \in (a/2, \pi+1)\} \subset \mathcal{E}$ and let $K \subset \mathcal{O}$ be a compact subset. By Lemma \ref{lemme variante controle increments martingale}, we know that there exists $q>1$ such that $\sup_{n\ge n_0} \sup_{z \in K}\E(\vert Z^{\vp, p}_n(z) \vert^q)<\infty$.

	Note that for all $z \in \mathcal{E}$, for all $n\ge n_0$,
	\begin{equation}\label{eq M 2n moins M n egal Z}
	\E\left( \left\vert M_{2n}(z)- M_n(z)\right\vert^p \right) =\E\left(Z^{\vp, p}_n(z)  \exp\left( \left(e^{p \Re(z) +(p-1)\vp}-p\Re(e^z)\right) \mathrm{h}_n \right) \right).
	\end{equation}
	Moreover, for all $\beta\in [b_0, 1]$, for all $u \in [a/2, \pi+1]$, if $z= \beta+ iu$, then
	\begin{align}
		e^{p \Re(z) +(p-1)\vp}-p\Re(e^z) =  e^{p \beta + (p-1) \vp}  - p e^\beta \cos(u)
		&\mathop{\longrightarrow}\limits_{\vp \to 0}  e^{p \beta} - pe^\beta \cos(u)\notag
		\\
		&\mathop{\longrightarrow}\limits_{p \downarrow 1}  (1- \cos(u)) e^\beta, \label{eq limite p et epsilon vers zero}
	\end{align}
	where the convergences are uniform in $\beta\in [b_0, 1]$ and $u \in [a/2, \pi+1]$. Therefore, by choosing $p>1$ close enough to $1$ and $\vp>0$ small enough, we have
	\begin{equation}\label{eq minimum strictement positif}
	\inf_{z \in \mathcal{O}} \left( e^{p \Re(z) +(p-1)\vp}-p\Re(e^z) \right) \ge \frac{1}{2}\left(1- \cos\left(\frac{a}{2}\right)\right) e^{b_0}>0.
	\end{equation}
	Let $q>1$ such that $\sup_{n\ge n_0} \sup_{z \in K}\E(\vert Z^{\vp, p}_n(z) \vert^q)<\infty$ and let $q'>1$ such that $1/q+1/q'=1$. Then, by Hölder's inequality,
	\begin{multline}\label{eq Holder Z}
		\E\left(Z^{\vp, p}_n(z)  \exp\left( \left(e^{p \Re(z) +(p-1)\vp}-p\Re(e^z)\right) \mathrm{h}_n \right) \right)\\\le \E(\vert Z^{\vp, p}_n(z)   \vert^q)^{1/q}  \E\left(
		 \exp\left( q'\left(e^{p \Re(z) +(p-1)\vp}-p\Re(e^z)\right) \mathrm{h}_n  \right) \right)^{1/q'}.
	\end{multline}
	Furthermore, by Lemma \ref{lemme moment exponentiel h n}, we see that for all $\vp'>0$, 
	\[
	\sup_{z \in K, n\ge n_0}
	\frac{1}{n^{q'\left(e^{p \Re(z) +(p-1)\vp}-p\Re(e^z)\right) + \vp'}}
	\E\left(
	\exp\left( q'\left(e^{p \Re(z) +(p-1)\vp}-p\Re(e^z)\right) \mathrm{h}_n  \right) \right)
<\infty.	\]
Therefore, by taking \eqref{eq M 2n moins M n egal Z} and \eqref{eq Holder Z} into account, we see that for all $\vp'>0$, 
\[
\sup_{z \in K, n \ge n_0}  \frac{1}{n^{\left(e^{p \Re(z) +(p-1)\vp}-p\Re(e^z)\right) +  \vp'/q'}} 	\E\left( \left\vert M_{2n}(z)- M_n(z)\right\vert^p \right) <\infty.
\]
Let $\vp''\in (0, (1- \cos(a/2)) e^{b_0}/2)$. We set for all $z \in \mathcal{O}$,
\[
\left\{ 
\begin{matrix*}[l]
	\alpha(z)&\coloneqq &(1-\cos(\mathrm{Im}(z)))\frac{e^{\Re(z)}}{p} + \vp''\\
	\delta(z) &\coloneqq &(1-\cos(\mathrm{Im}(z)))e^{\Re(z)} - \left(e^{p \Re(z) +(p-1)\vp}-p\Re(e^z)\right) - \frac{\vp'}{q'} + p\vp''.
\end{matrix*}
\right.
\]
By our choice of $\vp''$, the left-hand side of \eqref{eq minimum strictement positif} is larger than $\vp''$, so that $\alpha(z)>0$ for all $z \in \mathcal{O}$. Moreover, by choosing $\vp'$ small enough, $p$ small enough and $\vp$ small enough, using \eqref{eq limite p et epsilon vers zero}, we see that for all $z \in \mathcal{O}$, we have $\delta(z)>0$. Thus, by Lemma \ref{lemme 37} \eqref{item i lemme 37}, we see that for any compact set $K \subset \mathcal{O}$, there exists $\tilde{\vp}>0$ such that almost surely,
\[
\sup_{z \in K, n\ge n_0} \frac{1}{n^{\alpha(z)- \tilde{\vp}}}\left\vert M_n(z) - M_{n_0}(z) \right\vert  <\infty.
\]
We apply the above result to the compact set $K \coloneqq \{ \beta + iu; \ \beta \in [b_1, b_2], u \in [a, \pi]\}$. The desired statement then stems from the facts that $\vp''$ can be chosen arbitrarily small, that $p$ can be chosen arbitrarily close to $1$ and that Lemma \ref{lemme equivalent ps h n} gives that $\mathrm{h}_n \sim \log n$ almost surely.
\end{proof}
We are now in position to prove Lemma \ref{lemme controle queue integrale}.
\begin{proof}[Proof of Lemma \ref{lemme controle queue integrale}]
	Let $K \subset (-\infty, 1)$ be a compact set and let $a \in (0, \pi)$. Let $b_1<b_2<1$ such that $K\subset [b_1, b_2]$. Then, let us write for all $\beta \in [b_1, b_2]$,
	\begin{align*}
		\int_a^\pi \vert \cL_n (\beta + iu)\vert du&= \int_a^\pi \vert C_n(\beta+ iu) \vert \vert M_n(\beta + iu ) \vert du\\
		&= \int_a^\pi  \vert \exp(e^{\beta + iu} \mathrm{h}_n + O(1)) \vert \vert M_n(\beta + iu ) \vert du,
	\end{align*}
	as $n\to \infty$, where the $O(1)$ is uniform in $\beta \in [b_1, b_2] , u \in [a, \pi]$ by Lemma \ref{lemme equivalent C n}. As a result,
	\begin{equation}\label{eq integrale C n asymptotique}
		\int_a^\pi \vert \cL_n (\beta + iu)\vert du =  \int_a^\pi  \exp(e^{\beta}\cos(u) \mathrm{h}_n + O(1))  \vert M_n(\beta + iu ) \vert du,
	\end{equation}
	where the $O(1)$ is uniform in $\beta \in [b_1, b_2] , u \in [a, \pi]$. 
	
	Furthermore, by Lemma \ref{lemme controle uniforme martingale queue}, there exists $\vp>0$ such that almost surely,
	\[ \sup_{\beta \in [b_1, b_2], u \in [a, \pi]} 
	e^{-(1-\cos(u))e^\beta \mathrm{h}_n} \left\vert M_n(\beta +iu)-M_{n_0}(\beta +iu) \right\vert  = O\left( \frac{1}{n^\vp}\right).
	\]
	But notice that since $z \mapsto M_{n_0}(z)$ is continuous on $[b_1, b_2]\times [a, \pi]$, it is bounded. Thus, using the inequality $(1-\cos(u))e^\beta \ge (1- \cos(a))e^{b_1}$ and the fact that $\mathrm{h}_n \sim \log n$ a.s.\@ by Lemma \ref{lemme equivalent ps h n}, by possibly replacing $\vp$ by $\vp \wedge (1-\cos(a))e^{b_1}/2$, we get that almost surely,
	\[ \sup_{\beta \in [b_1, b_2], u \in [a, \pi]} 
	e^{-(1-\cos(u))e^\beta \mathrm{h}_n} \left\vert M_{n_0}(\beta +iu) \right\vert  = O\left( \frac{1}{n^\vp}\right).
	\]
	Hence, almost surely,
	\[
	 \sup_{\beta \in [b_1, b_2], u \in [a, \pi]} 
	e^{-(1-\cos(u))e^\beta \mathrm{h}_n} \left\vert M_n(\beta +iu) \right\vert  = O\left( \frac{1}{n^\vp}\right).
	\]
	By taking \eqref{eq integrale C n asymptotique} into account, we obtain that almost surely, as $n\to \infty$,
	\[
	\int_a^\pi \vert \cL_n (\beta + iu)\vert du =  O\left( \frac{1}{n^\vp}\right) \times \int_a^\pi  \exp(e^{\beta}\cos(u) \mathrm{h}_n +(1-\cos(u))e^\beta \mathrm{h}_n )   du = O\left(\frac{1}{n^\vp}\right) \times \exp\left(e^\beta \mathrm{h}_n\right),
	\]
	where the $O(1/n^\vp)$ is uniform in $\beta \in [b_1, b_2]$.
\end{proof}

\subsection{Proof of the main result}\label{sous-section preuve du profil}
Recall that for all $n, k \in \N$, we denote by $\mathbb{L}_n(k)$ the number of vertices of $\T_n$ at depth $k$. In this last subsection, we check that the weighted profile $(\widetilde{\mathbb{L}}_n(k))_{k\ge 0} \coloneqq (\mathbb{L}_n(k) e^{-S_k})_{k\ge 0}$ satisfies the assumptions A1--A4 of \cite{KMS17} to satisfy the general Edgeworth expansion on every open interval $(\beta_-, \beta_+) \subset (-\infty, 1)$ on the event that the random function $M_\infty$ from Proposition \ref{prop cv unif martingale} does not vanish on $(\beta_-, \beta_+)$. Let us state the precise result. Recall the definition of the analytic function $c(z)$ from Lemma \ref{lemme equivalent C n}.
\begin{theorem}\label{th edgeworth expansion}
	For all $\beta_-<\beta_+\le 1$, there exists $\eta>0$ such that on the event that $M_\infty$ does not vanish on $(\beta_-, \beta_+)$, the following holds almost surely:
	\begin{enumerate}[({A}1)]
		\item For all $\beta \in (\beta_-, \beta_+)$, for all $n\in \N$, we have $\cL_n(\beta)<\infty$;
		\item Locally uniformly on $ \{\beta + iu; \ \beta \in (\beta_-, \beta_+), u \in (-\eta, \eta)\}$, we have
		\[
		W_n(z) \coloneqq e^{-e^z \mathrm{h}_n} \cL_n(z) \mathop{\longrightarrow}\limits_{n\to \infty} W_\infty(z) \coloneqq e^{c(z)} M_\infty(z),
		\]
		and the random analytic function $z \mapsto e^{c(z)} M_\infty(z)$ does not vanish on $(\beta_-, \beta_+)$;
		\item For every compact subset $K \subset \{\beta + iu; \ \beta \in (\beta_-, \beta_+), u \in (-\eta, \eta)\}$, there exists a finite random variable $C_K$ and $\vp>0$ such that 
		\[
		\sup_{z \in K} \left\vert W_n(z)- W_\infty(z)  \right\vert \le \frac{C_K}{n^\vp}
		\]
		and in particular the above expression is a $o({1}/{\mathrm{h}_n^r})$ as $n\to \infty$ for all $r \ge 0$;
		\item For every compact subset $K \subset (\beta_-, \beta_+)$, for all $a \in (0, \pi)$, as $n\to \infty$,
		\[
		\sup_{\beta \in K} e^{-e^\beta \mathrm{h}_n} \int_a^\pi \vert \cL_n(\beta +iu) \vert du = O \left(\frac{1}{n^\vp}\right),
		\]
		and is in particular a $o(1/\mathrm{h}_n^r)$ for all $r \in \N$.
	\end{enumerate}
\end{theorem}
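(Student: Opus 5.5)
The plan is to check the four items one at a time, using the results already established. The key identity throughout is
\[
W_n(z)=e^{-e^z\mathrm{h}_n}\cL_n(z)=e^{-e^z\mathrm{h}_n}C_n(z)M_n(z)=\exp\bigl(c(z)+o(1)\bigr)M_n(z),
\]
which follows from Lemma \ref{lemme equivalent C n}. There the $o(1)$ is bounded by a deterministic sequence tending to zero, uniformly on compacts of $\mathcal{E}'$.

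Item (A1) is immediate: $\cL_n(\beta)$ is a finite sum over the $n+1$ vertices of $\T_n$. For (A2) and (A3), I first choose $\eta>0$ and $\beta_0$-type parameters so that every compact $K\subset\{\beta+iu;\ \beta\in(\beta_-,\beta_+),\ |u|<\eta\}$ is contained in some $\mathcal{E}_\beta$ from Proposition \ref{prop cv unif martingale}. The strip $\mathcal{E}_\beta$ requires $\Re(z)<1-2\beta$ and $|\mathrm{Im}(z)|<\beta/2$, so I take $\beta$ small enough that $K\subset\mathcal{E}_\beta$; this is possible because $\max_K\Re(z)<\beta_+\le1$. I then take $\eta<\beta_0/2$. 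If $\eta$ has to be tied to the compact set, I will cover the strip locally, or alternatively just take $\eta\le\beta/2$ with $\beta$ fixed small and treat the real part close to $1$ separately. The main subtlety is that the region $\Re(z)<1-2\beta$ does not reach $\beta_+=1$. Since compacts stay a positive distance from $1$, a countable union over $\beta\downarrow0$ will handle this.

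Proposition \ref{prop cv unif martingale} then gives a.s. uniform convergence $M_n\to M_\infty$ on $K$ with rate $O(n^{-\vp})$. To pass to $W_n-W_\infty$, I also need the rate of $e^{-e^z\mathrm{h}_n}C_n(z)\to e^{c(z)}$. This remainder is $\sum_{k\ge n}(\log(1+e^z/Z_k)-e^z/Z_k)=O(1/n)$ by \eqref{eq cv somme des un sur W}. Since $M_n$ is a.s. uniformly bounded on $K$, I get
\[
|W_n-W_\infty|\le |e^{c+O(1/n)}-e^{c}|\,|M_n|+|e^{c}|\,|M_n-M_\infty|=O(n^{-\vp}).
\]
Because $\mathrm{h}_n\sim\log n$ (Lemma \ref{lemme equivalent ps h n}), this is $o(\mathrm{h}_n^{-r})$. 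The non-vanishing claim in (A2) holds by hypothesis on the event considered, since $e^{c(z)}\neq0$.

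Item (A4) is exactly Lemma \ref{lemme controle queue integrale} applied to the compact $K\subset(\beta_-,\beta_+)\subset(-\infty,1)$, and the $o(\mathrm{h}_n^{-r})$ statement again follows from $\mathrm{h}_n\sim\log n$. The only genuine obstacle is the geometric bookkeeping in (A2)--(A3): producing a single $\eta$ valid for all compacts of the strip near $\Re z=1$. To settle it I will intersect the strip with $\Re(z)<1-2\beta$ for each rational $\beta$, take $\eta\le\beta_0/4$, and use that every compact is contained in some such truncated strip. If even that fails near $\Re z\to1$, I will use the $p$-moment route of Lemma \ref{lemme 37} directly, with $\Im z$ small relative to $1-\Re z$.
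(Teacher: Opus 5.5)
\textbf{Verdict.} Your proof of (A1)--(A4) is the paper's proof, correct for $\beta_+<1$. The paper cites Proposition \ref{prop cv unif martingale} and Lemma \ref{lemme equivalent C n} for (A2)--(A3), Lemma \ref{lemme controle queue integrale} for (A4), and Lemma \ref{lemme equivalent ps h n} for the $o(\mathrm{h}_n^{-r})$ claims. Your bound $e^{-e^z\mathrm{h}_n}C_n(z)=e^{c(z)}(1+O(1/n))$, uniform on compacts and read off from the tail of the series defining $c$, is exactly the step the paper leaves implicit. The problem is the choice of $\eta$, which you rightly flag as delicate but do not resolve correctly.

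\textbf{The correct choice for $\beta_+<1$.} Your first attempt fixes $\eta<\beta_0/2$ and then takes $\beta$ small so that $K\subset\mathcal{E}_\beta$. That reverses the quantifiers, since $\mathcal{E}_\beta$ has imaginary half-width only $\beta/2$. Instead, pick one $\beta'\in(0,\beta_0)$ with $1-2\beta'\ge\beta_+$ and set $\eta\coloneqq\beta'/2$. Then the whole strip $\{x+iy;\ x\in(\beta_-,\beta_+),\ |y|<\eta\}$ lies in $\mathcal{E}_{\beta'}$, and your argument goes through verbatim.

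\textbf{Why your fixes fail for $\beta_+=1$.}
\begin{itemize}
\item The countable-union fix does not work. One computes $\bigcup_{\beta'\in(0,\beta_0)}\mathcal{E}_{\beta'}=\{x+iy;\ |y|<\min(\beta_0/2,(1-x)/4)\}$, a cusp at $1$. For instance, the point $1-\delta+i\eta/2$ with $0<\delta\le 2\eta$ and $1-\delta>\beta_-$ lies in the strip but in no $\mathcal{E}_{\beta'}$, however small $\eta$ is. So ``compacts stay a positive distance from $1$'' does not help, because the admissible imaginary width shrinks with that distance.
\item Your fallback through Lemma \ref{lemme 37} does not help either. With $z=x+iy$, the $\mathrm{L}^p$ exponent $e^{px}-pe^x\cos y$ can be made negative for some $p\in(1,2)$ only when $|y|\lesssim 1-x$. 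This again gives a cone, as your own phrase ``$\mathrm{Im}\, z$ small relative to $1-\Re z$'' concedes, not a fixed-width strip.
\end{itemize}

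The paper's one-line proof is equally silent here. What the argument actually delivers for $\beta_+=1$ is (A2)--(A3) on the open neighbourhood $\bigcup_{\beta'}\mathcal{E}_{\beta'}$ of $(\beta_-,1)$, not on a strip $\{|\mathrm{Im}\, z|<\eta\}$. This costs nothing downstream: every compact subset of $(-\infty,1)$ lies in some $(\beta_-,\beta_+)$ with $\beta_+<1$, and that is all Theorem \ref{theoreme profil} uses.
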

\begin{proof}
	The first point is obvious since $\cL_n(\beta)$ is a finite sum. The convergence in the second point and the upper bound of the third point stem from Proposition \ref{prop cv unif martingale} and from Lemma \ref{lemme equivalent C n}. Finally the fourth point is just Lemma \ref{lemme controle queue integrale}. The fact that $1/n^\vp = o(1/\mathrm{h}_n^r)$ for all $r \in \N$ comes from Lemma \ref{lemme equivalent ps h n} which shows that $\mathrm{h}_n \sim \log n$ almost surely.
\end{proof}
Thus, the weighted profile $(\widetilde{\mathbb{L}}_n(k))_{k\ge 0}$ satisfies the conclusion of Theorem 2.1 of \cite{KMS17} with $w_n = \mathrm{h}_n$ and $\varphi(z)=e^z$. As in Equation (13) of \cite{KMS17}, let us write for all $n\in \N$ and $\beta \in \R$,
\[
x_n(k) =x_n(k;\beta)= \frac{k-e^\beta \mathrm{h}_n}{e^{\beta/2} \sqrt{\mathrm{h}_n}}.
\]
Then, Theorem 2.1 of \cite{KMS17} and Theorem \ref{th edgeworth expansion} show that for all $r \in \N$, for all $\beta_-< \beta_+ \le 1$, on the event that $M_\infty$ does not vanish on $(\beta_-, \beta_+)$, for every compact set $K \subset (\beta_-, \beta_+)$, 
\begin{equation}\label{eq edgeworth expansion}
	\mathrm{h}_n^{(r+1)/2}  \sup_{k \in \Z} \sup_{\beta \in K} \left\vert e^{\beta k - e^\beta \mathrm{h}_n} \widetilde{\mathbb{L}}_n(k) - \frac{W_\infty(\beta) e^{-\frac{1}{2} x_n(k)^2}}{e^{\beta/2}\sqrt{2\pi \mathrm{h}_n}} \sum_{j=0}^r \frac{G_j(x_n(k); \beta)}{\mathrm{h}_n^{j/2}} \right\vert\mathop{\longrightarrow}\limits_{n\to \infty}0,
\end{equation}
where the $G_j$'s are polynomials of degree $3j$ which are defined in Equation (16) of \cite{KMS17}. We will only use that $G_0=1$.

Recall that $\widetilde{\mathbb{L}}_n(k)=\mathbb{L}_n(k)e^{-S_k}$. By specializing to the case $r=0$ and $\beta=0$, we get the following local limit result: almost surely, as $n\to \infty$,
\begin{equation}\label{eq limite locale profil}
\mathbb{L}_n(k) = \frac{W_\infty(0)e^{\mathrm{h}_n}}{\sqrt{2\pi \mathrm{h}_n}} \exp\left(S_k- \frac{1}{2} \left(\frac{k-\mathrm{h}_n}{\sqrt{\mathrm{h}_n}}\right)^2 \right)  + e^{S_k}o\left(\frac{e^{\mathrm{h}_n}}{\sqrt{\mathrm{h}_n}} \right),
\end{equation}
where the error term is uniform in $k\in \N$. Another simple consequence of \eqref{eq edgeworth expansion} is obtained by taking $r=0$ and $k = \lfloor e^\beta \mathrm{h}_n\rfloor$: for every compact set $K \subset (-\infty,1)$, on the event that $W_\infty(z) \neq 0$ for all $z \in K$, we have almost surely for all $z \in K$,
\begin{equation}\label{eq limite profil h n}
\mathbb{L}_n(\lfloor e^z\mathrm{h}_n\rfloor) = \exp\left({S_{\lfloor e^z\mathrm{h}_n\rfloor} +  (1-z)e^z \mathrm{h}_n -\frac{1}{2} \log \mathrm{h}_n + O(1) }\right),
\end{equation}
where the $O(1)$ is uniform in $z \in K$. Using these results, let us prove Theorem \ref{theoreme profil}.

\begin{proof}[Proof of Theorem \ref{theoreme profil}]
	Let $\mathcal{Z} \coloneqq \{ z \in (-\infty, 1); \ M_\infty(z) =0\} = \{ z \in (-\infty, 1); \ W_\infty(z) =0\} $. By Lemma \ref{lemme limite non nulle}, we know that a.s.\@ $M_\infty(0)\neq 0 $ so that in particular the set $\mathcal{Z}$ of the zeros of the analytic function $z \mapsto M_\infty(z)$ is at most countable and its points are isolated. The first two points then follow from \eqref{eq limite locale profil} and \eqref{eq limite profil h n}.

	Let us then show the last part of the theorem. The a.s.\@ convergence of $\mathrm{h}_n/\log n$ stems from Lemma \ref{lemme equivalent ps h n}. Let $z\in (0,2)$ and $C>e^z$. We have
	\[
	\sum_{ e^z \log n\le k \le C \log n} \mathbb{L}_n(k)  \le  \sum_{ e^z \log n\le k \le C \log n} e^{zk - ze^z \log n} \mathbb{L}_n(k).
	\]
	Let $\vp>0$. By \eqref{condition ergodicite}, there exists $C_\vp >0$ such that $C_\vp n^\vp e^{-S_{k}}\ge 1$ for all $k \in \lb0, \lfloor C \log n \rfloor \rb$. Thus, 
	\[
	\sum_{ e^z \log n\le k \le C \log n} \mathbb{L}_n(k)  \le C_\vp n^\vp \sum_{ e^z \log n\le k \le C \log n} e^{zk - ze^z \log n} \mathbb{L}_n(k) e^{-S_k}\le C_\vp n^{\vp -ze^z \log n} \cL_n(z).
	\]
	But, by Lemma \ref{lemme equivalent C n} and since $(M_n(z))_{n\ge n_0}$ is a non-negative martingale (which hence converges almost surely), almost surely as $n\to \infty$ we have
	\begin{equation}\label{eq equivalement L n avec exp h n}
	\cL_n(z)  = C_n(z) M_n(z)  =e^{e^z\mathrm{h}_n + O(1)} M_\infty(z).
	\end{equation}
	Thus, using Lemma \ref{lemme equivalent ps h n}, for all $z \in (0,2)$, a.s.\@ for all $n$ large enough,
	\begin{equation}\label{eq pas de sommet trop haut 0}
		\sum_{ e^z \log n\le k \le C \log n} \mathbb{L}_n(k)  \le n^{2 \vp+ (1-z)e^z}
	\end{equation}
	By taking $\vp$ small enough and $z>1$, we see that the right-hand side goes to zero as $n\to \infty$. 
	We deduce that almost surely, for all $n$ large enough, there is no vertex of $\T_n$ at depth between $e^z \log n$ and $C\log n$. Furthermore, notice that if there is a vertex at depth larger than $C(\log(n)-1)$, then there is a vertex at depth $k$ for all $k\le C(\log(n)-1)$. As a result, 
	we conclude that a.s.\@ for all $n$ large enough, there is no vertex of $\T_n$ at depth at least $e^z \log n$.
	
	For the lower bound, one can see by \eqref{eq limite profil h n} that almost surely, for all $z<1$ which is not zero of $z \mapsto M_\infty(z)$, as $n\to \infty$,
	\[
	\mathbb{L}_n(\lfloor e^z \log n \rfloor) = \exp \left( (1-z)e^z \log n + o(\log n) \right),
	\]
	where we used Lemma \ref{lemme equivalent ps h n} and \eqref{condition ergodicite}. The right-hand side goes to infinity almost surely and is in particular non-zero. As a consequence, almost surely, $d(\T_n)/\log n \to e$. This is the last point of the theorem.
\end{proof}
Finally, let us conclude this section with the proof of Theorem \ref{th hauteur sommet typique}.
\begin{proof}[Proof of Theorem \ref{th hauteur sommet typique}]
	Let $V_n$ be a uniform random vertex of $\T_n$. Let $z \in (0,1)$. Note that $(1-z)e^z<1$. Therefore, by taking $\vp>0$ small enough, \eqref{eq pas de sommet trop haut 0} gives us that a.s.\@ as $n \to \infty$, for all $C>e^z$,
	\[
	\sum_{e^z \log n \le k \le C \log n} \mathbb{L}_n(k) = o(n).
	\]
	Thus, combining with the fact that, by Theorem \ref{theoreme profil}, $d(\T_n) \sim e \log n$ almost surely (so that for $C>e$, a.s.\@ for all $n$ large enough, there is no vertex at depth larger than $C\log n$), we deduce that the number of vertices at depth at least $e^z \log n$ is a $o(n)$ almost surely. Moreover, using that $\T_n$ has $n+1$ vertices, we see that the probability that $d(V_n)\ge e^z \log n$ converges to zero.
	
	Next, let us show the converse bound. We reason as in the proof of Theorem \ref{theoreme profil}. Let $z<0$. Note that for all $k \in \lb0, e^z \log n\rb$, we have $zk \ge ze^z \log n$, so that
	\[
	\sum_{0 \le k \le e^z \log n} \mathbb{L}_n (k)\le  \sum_{0 \le k \le e^z \log n}e^{zk - z e^z \log n} \mathbb{L}_n(k).
	\]
	Then, by \eqref{condition ergodicite}, there exists $C_\vp>0$ such that $C_\vp n^\vp e^{-S_k}\ge 1$ for all $k\in \lb0,\lfloor  e^z \log n \rfloor \rb$. So, the number of vertices with depth at most $e^z \log n$ is bounded from above by
	\[
	C_\vp n^{\vp -z e^z} \cL_n(z)  = C_\vp n^{\vp -z e^z} C_n(z)M_n(z) .
	\]
	But, by applying Lemma \ref{lemme equivalent C n}, the fact that the non-negative martingale $(M_n(z))_{n\ge n_0}$ converges a.s.\@ and Lemma \ref{lemme equivalent ps h n}, we see that a.s.\@ for all $n$ large enough,
	\[
	\sum_{0 \le k \le e^z \log n} \mathbb{L}_n (k) \le n^{2 \vp + (1-z)e^z}.
	\]
	By taking $\vp$ small enough and using that $(1-z)e^z<1$ since $z<0$, the right-hand side of the above display is a $o(n)$. In particular, we conclude as in the first part of the proof that the probability that $d(V_n) \le e^z \log n$ converges to zero. This ends the proof.
\end{proof}
\begin{remark}\label{more precise behaviour of h n}
	Let us say a few words on a refinement of the almost sure equivalent $\mathrm{h}_n \sim \log n$ as $n\to \infty$. Using the same ideas as in the above proof, one can see that for all $\vp>0$, almost surely, $\cL_n(0) \sim \sum_{(1-\vp)\log n \le k \le (1+\vp) \log n} \mathbb{L}_n(k)e^{-S_k}$ and $n\sim \sum_{(1-\vp)\log n \le k \le (1+\vp) \log n} \mathbb{L}_n(k)$ as $n\to \infty$. In particular, by the Borel Cantelli lemma, there exists a deterministic sequence $\vp_n \downarrow 0$ such that almost surely, as $n\to \infty$, we have 
	\[\cL_n(0) \sim \sum_{(1-\vp_n)\log n \le k \le (1+\vp_n) \log n} \mathbb{L}_n(k)e^{-S_k} \quad \text{and}\quad n\sim\sum_{(1-\vp_n)\log n \le k \le (1+\vp_n) \log n} \mathbb{L}_n(k).\] 
	Therefore, if we make the further assumption that $\max_{-\vp_n \log n \le k \le 1+\vp_n \log n} \vert S_{\lfloor \log n \rfloor +k}- S_{\lfloor \log n \rfloor}\vert = o(\vert S_{\lfloor \log n \rfloor}\vert)$ (which holds for i.i.d.\@ random weights), by taking \eqref{eq equivalement L n avec exp h n} into account with $z=0$, we deduce that a.s.\@ $\mathrm{h}_n=\log n - S_{\lfloor \log n \rfloor}+ O(1)+ o(\vert S_{\lfloor \log n\rfloor} \vert)$ as $n\to \infty$. 
\end{remark}

\section{A counterexample in the general case}\label{section contre-exemple}
This section is devoted to the proof of Proposition \ref{lemme contre-exemple} which gives a counterexample where the depth of the tree does not satisfy a scaling limit in the general case of weights bounded from above and from below which do not satisfy \eqref{condition ergodicite}. Here is the idea of the proof. Let $z\in \R$ with $z<2$. Recall that, by Lemma \ref{lemme martingale}, the process $(M_n(z))_{n\ge 0}$ is a positive martingale, so it converges almost surely to some random variable $M_\infty(z)$. In particular, for all $z <2$, for all $x \in [2, e]$, almost surely, 
\begin{equation*}
	\frac{1}{e^{S_{\lfloor x \log n \rfloor}}} \mathbb{L}_n(\lfloor x \log n \rfloor) e^{z \lfloor x \log n \rfloor} \le (1+o(1)) C_n(z) M_\infty(z) = O \left( e^{e^z \mathrm{h}_n } \right).
\end{equation*}
We will build a weight function $f$ such that the above inequality entails that $\mathbb{L}_n(\lfloor x \log n \rfloor)=0$ for $x\ge 5/2$ along a subsequence, thus proving the first part of Proposition \ref{lemme contre-exemple}. The second part will be obtained by choosing $f$ constant during long intervals and applying Theorem \ref{theoreme profil}.

Let us start with some observations. If $f$ is a weight function, we denote by $\T_n(f)$ the depth-weighted tree constructed using the weight function $f$. Let $A\in \N$ be a constant. Let $f_\infty$ be a weight function taking its values in $\{1,3\}$ such that $f_\infty(k)=3$ for all $k\ge A$. For all $n\in \N$ such that $2\lfloor \log n \rfloor \ge A$, let $f_n$ be a weight function taking its values in $\{1,3\}$ such that $f_n$ coincides with $f_\infty$ on $\lb 0, A \rb$, such that  $f_n(k)=3$ for all $k \in \lb A, 2\lfloor  \log n \rfloor\rb$ and $f_n(k)=1$ for all $k>2 \lfloor \log n\rfloor$. Recall that the root is denoted by $v_0$ and that for all $m\ge 1$, we denote by $v_m$ the vertex added at the $m$-th step.

\begin{lemma}\label{lemma coupling}
	Let $n \in \N$ be such that $2 \lfloor \log n \rfloor \ge A$. We can couple $(\T_m(f_\infty))_{m\ge 0}$ with $(\T_m(f_n))_{m\ge 0}$ so that  for all $m\in \lb 0, n \rb$, if the vertex $v_m$ is at depth at most $2 \lfloor \log n \rfloor $ in $\T_m(f_\infty)$, then $v_m$ has the same depth in both trees $\T_m(f_\infty)$ and $\T_m(f_n)$.
\end{lemma}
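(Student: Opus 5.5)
We construct the two sequences step by step with a common source of randomness, so that the trees stay identical on every vertex of depth at most $2\lfloor \log n\rfloor$ (call this depth $D$) and can only differ in vertices that are deeper. The key observation is that $f_\infty$ and $f_n$ coincide on $\lb 0, D\rb$: indeed $f_n=f_\infty$ on $\lb 0,A\rb$, and on $\lb A, D\rb$ both are equal to $3$. They differ only at depths $k>D$, where $f_\infty(k)=3$ and $f_n(k)=1$. In particular, the weight attached to any vertex of depth at most $D$ is the same in both models, and so is the rule for attaching a child to such a vertex, since that child has depth at most $D+1$.

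The coupling is built inductively and keeps an explicit injection of vertex labels. We will maintain the following invariant at each time $m$. There is a bijection between $\T_m(f_\infty)$ and $\T_m(f_n)$ preserving labels $v_0,\dots,v_m$. Every vertex whose depth in $\T_m(f_\infty)$ is at most $D$ has the same depth and the same parent in $\T_m(f_n)$. Write $\mathcal{S}_m$ for the set of such vertices, which we call \emph{shallow}. Every non-shallow vertex, in both trees, is a descendant of a shallow vertex at depth exactly $D$ whose child lies at depth $D+1$. Consequently the set $\mathcal{S}_m$ and its depths are literally the same in both trees, and all other vertices have depth $>D$ in both trees. At time $m+1$ we sample $V_m$ in each tree with a maximal coupling as follows. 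The mass on shallow vertices is $\sum_{v\in\mathcal{S}_m} f(d(v))$, which is the same number $a$ for both weight functions. The remaining masses are $b_\infty$ and $b_n$, computed with $f_\infty$ and $f_n$ respectively on deep vertices. We draw a uniform $U\in[0,1]$. For $f_\infty$, if $U< a/(a+b_\infty)$ we choose a shallow vertex according to the normalized weights on $\mathcal{S}_m$; otherwise we choose a deep vertex. For $f_n$ we use the same $U$ with threshold $a/(a+b_n)$, and we select the \emph{same} shallow vertex, via an identical auxiliary uniform, when both thresholds are passed.

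It remains to check the invariant, and this is the step requiring care. The thresholds differ, since $b_n\le b_\infty$, so on the event $U<a/(a+b_\infty)$ both trees attach $v_{m+1}$ to the same shallow parent, and the depths agree. On the complementary event the $f_\infty$-tree attaches $v_{m+1}$ to a deep vertex, so $v_{m+1}$ has depth $>D+1$ in $\T_{m+1}(f_\infty)$ and the claim says nothing about it. In the $f_n$-tree, however, $v_{m+1}$ might attach to a shallow vertex, which would break the invariant that shallow sets coincide. To avoid this, we weaken the invariant. The vertices that are shallow in the $f_\infty$-tree keep the same depths and parents in the $f_n$-tree. Vertices that are deep in the $f_\infty$-tree may have arbitrary positions in the $f_n$-tree, but they are never used as parents of vertices that are shallow in the $f_\infty$-tree. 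Under the $f_\infty$ law, a vertex becomes shallow only if its parent is shallow, so its parent then lies in the common set. We therefore only need the probability of choosing a given $f_\infty$-shallow vertex $w$ to be at least as large for $f_\infty$ as for $f_n$, so that we can realize it simultaneously, using a coupling $\P(V^\infty=w)\le\P(V^n=w)$ matched. The main obstacle is that the total weight $Z_m$ differs between the two trees, so the single-vertex probabilities $f(d(w))/Z_m$ are not directly comparable. We handle this through the ordering of total masses. Deep vertices in the $f_\infty$-tree carry weight $3$, the maximal value, so $Z_m(f_\infty)\ge Z_m(f_n)$ as long as the shallow parts agree. Hence $f(d(w))/Z_m(f_\infty)\le f(d(w))/Z_m(f_n)$ for every common shallow $w$, which permits a coupling in which, whenever the $f_\infty$-tree picks a shallow $w$, the $f_n$-tree picks the same $w$. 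This propagates the invariant and proves the lemma by induction on $m\le n$.
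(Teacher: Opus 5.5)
Your final argument is correct and is essentially the paper's proof. With the weakened invariant, vertices that are deep in the $f_\infty$-tree carry the maximal weight $3$, so $Z_m(f_\infty)\ge Z_m(f_n)$; hence every $f_\infty$-shallow vertex is at least as likely to be chosen in the $f_n$-tree, which lets you couple so that every shallow choice made in the $f_\infty$-tree is mirrored in the $f_n$-tree.

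In a write-up, drop the initial invariant that the shallow sets coincide in both trees, since it can fail, as you noticed. Also fix the reversed sentence ``at least as large for $f_\infty$ as for $f_n$'', which should read $\P(V^\infty=w)\le\P(V^n=w)$.
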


\begin{proof}
	We construct the coupling of $(\T_m(f_\infty))_{m\ge 0}$ with $(\T_m(f_n))_{m\ge 0}$ as follows. For all $m\ge 0$, we will denote by $d_n(v_m)$ and $d_\infty(v_m)$ the depth of the vertex $v_m$ in $\T_m(f_n)$ and $\T_m(f_\infty)$ respectively.
	
	We start with $\T_0(f_n)=\T_0(f_\infty)$, the tree with one root vertex $v_0$. Assume that we constructed $(\T_j(f_\infty))_{ 0\le j \le m}$ with $(\T_j(f_n))_{ 0\le j \le m}$ until some time $m\ge 0$ so that for all $j \in \lb 0, m \rb$, 
	if $d_\infty(v_j)\le 2 \lfloor \log n \rfloor$, then $d_n(v_j) = d_\infty(v_j)$. 
	Recall that we attach $v_{m+1}$ to a vertex $v$ in $\T_m(f_n)$ with probability
	\[
	\frac{f_n(d_n(v))}{\sum_{j=0}^{m} f_n(d_n(v_j))}.
	\]
	Note that since $f_n(k)=f_\infty(k)$ for all $k \le 2 \lfloor \log n \rfloor$ and since $f_n \le f_\infty$, we deduce that for all $v \in \T_m(f_n)$ such that $d_\infty(v) \le2 \lfloor \log n \rfloor$,
	\[
	\frac{f_n(d_n(v))}{\sum_{j=0}^{m} f_n(d_n(v_j))} \ge \frac{f_n(d_n(v))}{\sum_{j=0}^{m} f_\infty(d_n(v_j))} = \frac{f_\infty(d_n(v))}{\sum_{j=0}^{m} f_\infty(d_n(v_j))} = \frac{f_\infty(d_\infty(v))}{\sum_{j=0}^{m} f_\infty(d_n(v_j))} \ge \frac{f_\infty(d_\infty(v))}{\sum_{j=0}^{m} f_\infty(d_\infty(v_j))},
	\]
	where in the last equality, we used the fact that $d_n(v) = d_\infty(v)$ and in the last inequality, we used the fact that if $d_\infty(v_j) \le 2 \lfloor \log n \rfloor$, then $d_n(v_j)=d_\infty(v_j)$ and that otherwise we have $f_\infty(d_\infty(v_j))=3 \ge f_\infty(d_n(v_j))$.
	
	Thus, we can couple $(\T_j(f_\infty))_{ 0\le j \le m+1}$ with $(\T_j(f_n))_{ 0\le j \le m+1}$ so that if we attach $v_{m+1}$ to a vertex at depth at most $2 \lfloor \log n \rfloor$ in $\T_m(f_\infty)$, then we attach it to the same vertex in $\T_m(f_n)$. This proves the desired result.
\end{proof}

From the above lemma, we will deduce that the depth of a typical vertex in $\T_n(f_n)$ is roughly $\log n$.
\begin{lemma}\label{lemme hauteur sommet typique}
	For all $\vp>0$, under the same coupling as in Lemma \ref{lemma coupling}, almost surely, for all $n$ large enough,
	\[
	\forall m \in \lb \lfloor \log \log n \rfloor , n \rb , \ \#\{ v \in \T_m(f_n);  \ (1-\vp) \log m \le  d_n(v) \le (1+\vp) \log m \} \ge (1-\vp) m  .
	\]
\end{lemma}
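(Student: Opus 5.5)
We transfer the typical-depth result for $f_\infty$ to $f_n$ through the coupling of Lemma \ref{lemma coupling}. Since $f_\infty$ takes values in $\{1,3\}$ and equals $3$ from depth $A$ on, we have $S_k/k \to \log 3$. Replacing $f_\infty$ by $f_\infty/3$ does not change the law of the tree, so \eqref{condition ergodicite} holds with $\ell=0$. Theorem \ref{th hauteur sommet typique} therefore applies to $(\T_m(f_\infty))_{m\ge 0}$. Fix $\vp>0$ and $\vp'\in(0,\vp)$ small. Almost surely there is $m_0$ (random) such that for all $m\ge m_0$ at least $(1-\vp')m$ vertices of $\T_m(f_\infty)$ have depth in $[(1-\vp')\log m,(1+\vp')\log m]$. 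Since $\vp$ ranges over a countable set, it suffices to fix $\vp$.

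The next step is to identify depths in the two trees. Each vertex $v_j$ keeps its depth once attached, so $d_n(v_j)$ and $d_\infty(v_j)$ do not depend on $m\ge j$. Take $m\le n$ and a vertex $v_j\in\T_m(f_\infty)$ with $d_\infty(v_j)\le(1+\vp')\log m$. For $n$ large we have $(1+\vp')\log m\le(1+\vp')\log n\le 2\lfloor\log n\rfloor$, so Lemma \ref{lemma coupling} gives $d_n(v_j)=d_\infty(v_j)$. Hence, for every $m\in\lb m_0,n\rb$, at least $(1-\vp')m\ge(1-\vp)m$ vertices of $\T_m(f_n)$ have $d_n$-depth in $[(1-\vp)\log m,(1+\vp)\log m]$. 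The same $\T_\infty$-event serves simultaneously for all $n$, because the coupled $f_\infty$-process has the same law for every $n$. One has to be careful here: the lemma is stated as a coupling for each fixed $n$, so the a.s. statement has to be read on the single $f_\infty$-sequence used in all couplings. I will take the couplings to be built on a common $f_\infty$-process, as the construction permits.

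The remaining issue is the range $m\in\lb\lfloor\log\log n\rfloor, m_0\rb$, and I expect this to be the main obstacle. Since $m_0$ is random but almost surely finite, we have $\lfloor\log\log n\rfloor\ge m_0$ for all $n$ large enough, and then this range is empty. This is exactly why the statement starts at $\lfloor\log\log n\rfloor$ rather than at a fixed index. The one technical point to check is that the threshold $m_0$ from Theorem \ref{th hauteur sommet typique} is uniform in the sense needed. That theorem gives an a.s. equivalence of counts, which yields such an $m_0$ for each fixed $\vp'$. Putting these steps together gives the claim almost surely for all $n$ large enough.
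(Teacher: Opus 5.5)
Your proposal is correct and follows the paper's proof: both use the coupling of Lemma \ref{lemma coupling} to reduce to the corresponding statement for $\T_m(f_\infty)$, and then apply Theorem \ref{th hauteur sommet typique} to $f_\infty/3$, which satisfies \eqref{condition ergodicite} with $\ell=0$. You also spell out three points the paper leaves implicit: depths up to $(1+\vp)\log m\le 2\lfloor\log n\rfloor$ are preserved by the coupling; the cutoff $\lfloor\log\log n\rfloor$ eventually exceeds the random threshold $m_0$; and all couplings must be built on one common $f_\infty$-process, since equality in law alone would not give an almost sure statement in $n$.
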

\begin{proof}
	Recall that $\T_m(f_n)$ and $\T_m(f_\infty)$ have both $m+1$ vertices. Thanks to the coupling of Lemma \ref{lemma coupling}, it suffices to prove that for all $\vp>0$, a.s.\@ for all $n$ large enough,
	\[
	\forall m \in \lb \lfloor \log \log n \rfloor , n \rb , \ \#\{ v \in \T_m(f_\infty);  \ (1-\vp) \log m \le  d_\infty(v) \le (1+\vp) \log m \} \ge (1-\vp) m.
	\]
	Since the weight function $f\coloneqq f_\infty/3$ satisfies the condition \eqref{condition ergodicite}, we can apply Theorem \ref{th hauteur sommet typique} which concludes the proof.
\end{proof}
Another lemma which will be useful in the proof of Proposition \ref{lemme contre-exemple} is an upper bound for the depth of $\T_n(f_n)$.
\begin{lemma}\label{lemme hauteur au plus cinq demi}
	We have
	\[
	\P\left( d(\T_n(f_n)) \ge \frac{5}{2} \log n 
	 \right)
	 \mathop{\longrightarrow}\limits_{n\to \infty} 0.
	\]
\end{lemma}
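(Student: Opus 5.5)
The plan is to run the Laplace transform and martingale argument of Section~\ref{section martingale} for the single weight function $f_n$ and a well-chosen real $z<2$. Lemma~\ref{lemme martingale} and Lemma~\ref{lemme equivalent C n} only use $1/3\le f\le 3$, so they apply to $f_n$ with a constant $c$ independent of $n$. The only other input is an upper bound on $\mathrm{h}_n$, which should be close to $\frac{1}{3}\log n$ because most vertices have weight $3$.

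\textbf{Step 1 (bound on $\mathrm{h}_n$).} Fix $\vp>0$ and use the coupling of Lemma~\ref{lemma coupling}. By Lemma~\ref{lemme hauteur sommet typique}, almost surely for $n$ large, every $m\in\lb \lfloor\log\log n\rfloor, n\rb$ has at least $(1-\vp)m$ vertices of $\T_m(f_n)$ at depth in $[(1-\vp)\log m,(1+\vp)\log m]$. Since $\log m\ge \log\log\log n\to\infty$ and $(1+\vp)\log m\le 2\lfloor\log n\rfloor$ for $\vp$ small and $n$ large, this depth range lies in $[A,2\lfloor\log n\rfloor]$. On that range $f_n=3$. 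Hence $Z_m\ge 3(1-\vp)m$ for all such $m$. Bounding the first $\log\log n$ terms of $\mathrm{h}_n$ by $\sum_{m<\log\log n} 1/(m/3)=O(\log\log\log n)$ gives
\[
\mathrm{h}_n\le \frac{1+2\vp}{3}\log n .
\]
This holds a.s.\ for $n$ large, so the event $G_n$ on which it holds satisfies $\P(G_n)\to1$.

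\textbf{Step 2 (first moment bound).} Take $z=1.9$, which lies in $\mathcal{E}'$, and $k=\lfloor \frac52\log n\rfloor$. For $f_n$ we have $S_j=(j-A)\log 3+O(1)$ for $j\le 2\lfloor\log n\rfloor$, and $S_j$ is constant equal to $2\log 3\,\log n+O(1)$ for $j\ge 2\lfloor\log n\rfloor$; here the $O(1)$ depends only on $A$. If $d(\T_n(f_n))\ge k$, then there is a vertex at depth exactly $k$, so
\[
e^{zk-S_k}\le \cL_n(z)=C_n(z)M_n(z).
\]
On $G_n$, Lemma~\ref{lemme equivalent C n} gives $C_n(z)\le \exp\bigl(e^z\tfrac{1+2\vp}{3}\log n+O(1)\bigr)$. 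The nonnegative martingale satisfies $\E(M_n(z))=\E(M_{n_0}(z))\le C$, with $C$ deterministic. By Markov's inequality, $\P(M_n(z)>n^\delta)\le Cn^{-\delta}$.

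\textbf{Step 3 (conclusion).} Outside $G_n^c\cup\{M_n(z)>n^\delta\}$, the event $d(\T_n(f_n))\ge k$ forces
\[
z\cdot\tfrac52-2\log 3\le \tfrac{1+2\vp}{3}e^{z}+\delta+o(1).
\]
Numerically $4.75-2.197\approx 2.553$ while $e^{1.9}/3\approx 2.229$. Choosing $\vp,\delta$ small therefore yields a contradiction for $n$ large. Hence
\[
\P\bigl(d(\T_n(f_n))\ge\tfrac52\log n\bigr)\le \P(G_n^c)+Cn^{-\delta}\to0 .
\]

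The main obstacle is Step~1. It needs the typical-depth statement uniformly over all $m$ between $\log\log n$ and $n$, which is exactly what Lemma~\ref{lemme hauteur sommet typique} provides. One must also check that the window $[(1-\vp)\log m,(1+\vp)\log m]$ really sits inside the region where $f_n=3$, using $A$ fixed and $m\ge\log\log n$. A secondary point is that $n_0$ and the constants in Lemma~\ref{lemme equivalent C n} must be uniform in $n$; this holds because the bounds $1/3\le f_n\le 3$ do not depend on $n$.
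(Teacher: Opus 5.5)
Your proof is correct and is essentially the paper's argument. You bound $e^{zk-S_k}\le \cL_n(z)=C_n(z)M_n(z)$ at $k=\lfloor \frac52\log n\rfloor$, control $C_n(z)$ through $\mathrm{h}_n\le(\frac13+\vp)\log n$ with high probability (derived exactly as in the paper from the coupling and Lemma~\ref{lemme hauteur sommet typique}), and use $\E(M_n(z))=\E(M_{n_0}(z))$. The differences are cosmetic: you take $z=1.9$ instead of $z=3/2$, which gives a larger margin in the exponent, and you apply Markov's inequality to $M_n(z)$ instead of bounding $\E\bigl(\mathbb{L}_n(k)\mathbf{1}_{\{\mathrm{h}_n\le (1/3+\vp)\log n\}}\bigr)$.
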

\begin{proof}
	Recall that $n_0= \lfloor e^2/c \rfloor +1$ (since $f_n$ takes its values in $\{1,3\}$, we can take $c=1/3$). Let $S^n_k \coloneqq \sum_{j=0}^{k-1} \log f_n(j)$ for $k\ge 0$ be the walk associated with $f_n$ and let $(\cL^n_m(z))_{m\ge 0}, (C^n_m(z))_{m\ge n_0}$ and $(M^n_m(z))_{m\ge n_0}$ be the associated weighted Laplace transform, normalization and martingale for $z \in (0,2)$. Let $(\mathbb{L}^n_m(k))_{k\in \N}$ be the profile of $\T_m(f_n)$. We also denote by $Z^n_m\coloneqq \sum_{v \in \T_m(f_n)} f_n(d_n(v))$ the sum of the weights for $\T_m(f_n)$ and we set
	\begin{equation}\label{eq def h n m}
	\mathrm{h}^n_m \coloneqq \sum_{k=n_0}^{m-1} \frac{1}{Z^n_k}.
	\end{equation}
	Note that $3(m+1)\ge Z^n_m \ge m+1$ for all $n$ such that $2 \lfloor \log n \rfloor \ge A$ and for all $m \ge 0$. As a result, by exactly the same proof as in the proof of Lemma \ref{lemme equivalent C n}, there is a constant $b>0$ such that for all $z \in (0,2)$, for all $n\ge n_0$ such that $2 \lfloor \log n \rfloor\ge A$, for all $m\ge n_0$,
	\[
	\left\vert \log C^n_m(z) -e^z \mathrm{h}^n_m \right\vert <b.
	\]
	So, for all $x \ge 0$, for all $z \in (0,2)$, for all $n\ge n_0$ such that $2 \lfloor \log n \rfloor\ge A$, using that $\cL^n_n(z)= C^n_n(z) M^n_n(z)$, we have
	\[
	\mathbb{L}^n_n(\lfloor x \log n \rfloor) \le e^{S^n_{\lfloor x \log n \rfloor} - z \lfloor x \log n \rfloor} \cL^n_n(z) 
	 \le e^{S^n_{\lfloor x \log n \rfloor} - z \lfloor x \log n \rfloor + e^z \mathrm{h}^n_n + b} M^n_n(z).
	\]
	Let $\vp>0$. Then, we have
	\[
	\mathbb{L}^n_n(\lfloor x \log n \rfloor) {\bf 1}_{\mathrm{h}_n^n \le (\vp+1/3) \log n}
	\le e^{S^n_{\lfloor x \log n \rfloor} - z \lfloor x \log n \rfloor + e^z \left(\frac{1}{3}+ \vp \right) \log n+ b} M^n_n(z) {\bf 1}_{\mathrm{h}_n^n \le (\vp+1/3) \log n}.
	\]
	By taking the expectation and using the fact that $\E(M^n_{n_0}(z)) = \E(\cL^n_{n_0}(z)) \le (n_0+1)e^{n_0 z + n_0 \log 3}$, we see that as $n\to \infty$,
	\[
	\E\left( \mathbb{L}^n_n(\lfloor x \log n \rfloor)  {\bf 1}_{\mathrm{h}_n^n \le (\vp+1/3) \log n}\right) =O \left(
	\exp\left({S^n_{\lfloor x \log n \rfloor} - z \lfloor x \log n \rfloor + e^z \left(\frac{1}{3}+ \vp \right) \log n }\right)  \right).
	\]
	We take $x=5/2$ and $z=3/2$. Then, using that for all $k \in \lb A, 2 \lfloor \log n \rfloor\rb$, we have $f_n(k)=3$ and that $f_n(k) = 1$ for all $k>2 \lfloor \log n \rfloor$,
	\begin{align*}
		S^n_{\lfloor x \log n \rfloor} - z \lfloor x \log n \rfloor + e^z \left(\frac{1}{3} + \vp \right) \log n &= \left( 2\log (3)- \frac{5}{2} z + e^z  \left(\frac{1}{3} + \vp \right) \right)  \log n  + o(\log n)\\
		&=\left( 2\log (3)- \frac{15}{4}  + e^{3/2}  \left(\frac{1}{3} + \vp \right) \right)  \log n  + o(\log n),
	\end{align*}
	where we used the equivalent $S^n_{\lfloor x \log n \rfloor} \sim 2\log(3)\log n$, which comes from the fact that $f_n(k)=1$ for all $k>2 \lfloor \log n \rfloor$ and $f_n(k)=3$ for all $k \in \lb A, 2 \lfloor \log n \rfloor\rb$. 
	
	Using that $2\log(3) -15/4 + e^{3/2}/3 <0$, by taking $\vp>0$ small enough, we obtain that
	\begin{equation}\label{eq esperance profil tend vers zero}
	\E\left( \mathbb{L}^n_n\left(\left\lfloor \frac{5}{2} \log n \right\rfloor\right)  {\bf 1}_{\mathrm{h}_n^n \le (\vp+1/3) \log n}\right)  \mathop{\longrightarrow}\limits_{n \to \infty} 0.
	\end{equation}
	Furthermore, by applying Lemma \ref{lemme hauteur sommet typique} and the fact that for all $k \in \lb A, 2 \lfloor \log n \rfloor\rb$, we have $f_n(k)=3$, we see that under the same coupling as in Lemma \ref{lemma coupling},
	\[
	\sup_{\log \log n \le m \le n} \left\vert \frac{Z^n_m}{3m} -1 \right\vert \mathop{\longrightarrow}\limits_{n \to \infty}^{\mathrm{a.s.}} 0.
	\]
	Recalling \eqref{eq def h n m}, we deduce that
	\[
	\P\left(\mathrm{h}_n^n \le \left( \frac{1}{3} +\vp\right)\log n \right) \mathop{\longrightarrow}\limits_{n\to \infty} 1.
	\]
	By taking \eqref{eq esperance profil tend vers zero} into account, this concludes the proof of the lemma.
\end{proof}
Using the above lemmas, we are now in position to provide the counterexample mentioned in Proposition \ref{lemme contre-exemple}.
\begin{proof}[Proof of Proposition \ref{lemme contre-exemple}]
	We define $f$ as follows. The idea is to alternate between a long sequence of $1$'s so that the limit superior of $d(\T_n)/\log n$ is at least $e$ thanks to Theorem \ref{theoreme profil} and a ``copy of $f_n$'' so that the limit inferior is smaller than $5/2$ thanks to Lemma \ref{lemme hauteur au plus cinq demi}. Let $(n_j)_{j\ge 1}$ be an increasing sequence of integers constructed as follows. We choose $n_1$ large enough and set $f(k)=1$ for all $k \in \lb 0, n_1 \rb$ so that the probability that $d(\T_{n_1})/\log n_1 \ge e-1/2$ is at least $1/2$ thanks to Theorem \ref{theoreme profil}. Then, let $A_1=n_1+1$. Choose $n_2$ large enough that if we set $f(k)=3$ for all $k \in \lb A_1, 2 \lfloor \log n_2 \rfloor \rb$ and $f(k)=1$ for all $k \in \lb 2 \lfloor \log n_2 \rfloor +1, n_2\rb$, then by Lemma \ref{lemme hauteur au plus cinq demi}, the probability that $d(\T_{n_2}) \le (5/2)\log n_2$ is at least $1/2$. Similarly, for all $j\ge 1$, assuming that $n_1, \ldots, n_{2j}$ have been constructed, we take $n_{2j+1}$ large enough that by setting $f(k)=1$ for all $k \in \lb n_{2j}, n_{2j+1} \rb$, we have $d(\T_{n_{2j+1}})/\log n_{2j+1} \ge e-1/2^{j}$ with probability at least $1-1/2^{j}$ thanks to Theorem \ref{theoreme profil}. Write $A_j=n_{2j+1}+1$. Then, we take $n_{2j+2}$ large enough so that, by setting $f(k)=3$ for all $k \in \lb A_j, 2 \lfloor \log n_{2j+2} \rfloor\rb$ and $f(k)=1$ for all $k \in  \lb 2 \lfloor \log n_{2j+2} \rfloor+1 , n_{2j+2} \rb$, we obtain that $d(\T_{n_{2j+2}})/\log n_{2j+2} \le 5/2$ with probability at least $1-1/2^j$. We conclude using the Borel-Cantelli lemma.
\end{proof}

\paragraph{Acknowledgements.} I acknowledge the support of a Research Fellowship from Emmanuel College, Cambridge.

\bibliographystyle{alpha}
\bibliography{biblio}

\end{document}